
\documentclass[12pt, twoside]{article}
\usepackage{amscd,latexsym,amsthm,amsfonts,amssymb,amsmath,amsxtra}
\usepackage[all]{xy}

\usepackage[mathscr]{eucal}
\usepackage[pdftex,bookmarks,colorlinks,pdfmenubar]{hyperref}
\usepackage{times}
\usepackage{enumerate}

\pagestyle{myheadings}
\def\titlerunning#1{\gdef\titrun{#1}}
\makeatletter
\def\author#1{\gdef\autrun{\def\and{\unskip, }#1}\gdef\@author{#1}}
\def\address#1{{\def\and{\\\hspace*{18pt}}\renewcommand{\thefootnote}{}%
\footnote {#1}}%
\markboth{\autrun}{\titrun}}
\makeatother
\def\email#1{e-mail: #1}
\def\subjclass#1{{\renewcommand{\thefootnote}{}%
\footnote{\emph{Mathematics Subject Classification (2010):} #1}}}
\def\keywords#1{\par\medskip
\noindent\textbf{Keywords.} #1}

\newcommand{\BA}{{\mathbb {A}}}

\newcommand{\BC}{{\mathbb {C}}}

\newcommand{\BZ}{{\mathbb {Z}}}

\newcommand{\CA}{{\mathcal {A}}}

\newcommand{\CE}{{\mathcal {E}}}
\newcommand{\CF}{{\mathcal {F}}}

\newcommand{\CO}{{\mathcal {O}}}
\newcommand{\CP}{{\mathcal {P}}}

\newcommand{\CS}{{\mathcal {S}}}

\newcommand{\CU}{{\mathcal {U}}}

\newcommand{\CZ}{{\mathcal {Z}}}

\newcommand{\Fd}{{\mathfrak {d}}}

\newcommand{\Fg}{{\mathfrak {g}}}

\newcommand{\Fm}{{\mathfrak {m}}}
\newcommand{\Fn}{{\mathfrak {n}}}

\newcommand{\Fr}{{\mathfrak {r}}}

\newcommand{\RU}{{\mathrm {U}}}

\newcommand{\Asai}{{\mathrm{As}}}

\newcommand{\cusp}{{\mathrm{cusp}}}
\newcommand{\Cent}{{\mathrm{Cent}}}

\newcommand{\disc}{{\mathrm{disc}}}

\newcommand{\el}{{\mathrm{ell}}}

\newcommand{\Gal}{{\mathrm{Gal}}}
\newcommand{\GL}{{\mathrm{GL}}}

\newcommand{\reg}{{\mathrm{reg}}}
\newcommand{\Res}{{\mathrm{Res}}}

\newcommand{\simp}{{\mathrm{sim}}}
\newcommand{\SL}{{\mathrm{SL}}}

\newcommand{\SU}{{\mathrm{SU}}}

\newcommand{\sm}{{\mathrm{ss}}}

\newcommand{\st}{{\mathrm{st}}}
\newcommand{\Span}{{\mathrm{Span}}}
\newcommand{\subr}{{\mathrm{subr}}}

\newcommand{\tr}{{\mathrm{tr}}}

\newcommand{\ud}{\,\mathrm{d}}

\newcommand{\udl}{\underline}
\newcommand{\wt}{\widetilde}
\newcommand{\wh}{\widehat}

\newcommand{\cpair}[1]{\left\{{#1}\right\}}
\newcommand{\ppair}[1]{\left( {#1} \right)}

\def\bks{{\backslash}}

\def\diag{{\rm diag}}

\newtheorem{thm}{Theorem}[section]
\newtheorem{cor}[thm]{Corollary}
\newtheorem{lem}[thm]{Lemma}
\newtheorem{prop}[thm]{Proposition}
\newtheorem {conj}[thm]{Conjecture}
\newtheorem {ques/conj}[thm]{Question/Conjecture}





\numberwithin{equation}{section}

\frenchspacing

\textwidth=15cm
\textheight=23cm
\parindent=16pt
\oddsidemargin=-0.5cm
\evensidemargin=-0.5cm
\topmargin=-0.5cm


\begin{document}

 \titlerunning{}

\title{On the Non-vanishing of the Central Value of Certain $L$-functions: Unitary Groups}

\author{Dihua Jiang
\and
Lei Zhang
}
\date{}

\maketitle

\address{Dihua Jiang: School of Mathematics, University of Minnesota, Minneapolis, MN 55455, USA; \email{dhjiang@math.umn.edu}
\and
Lei Zhang: Department of Mathematics,
National University of Singapore,
Singapore 119076;
\email{matzhlei@nus.edu.sg}
}



\subjclass{Primary 11F67, 11F70, 22E55; Secondary 11F30, 11F66}

\begin{abstract}
Let $\pi$ be an irreducible cuspidal automorphic representation of a quasi-split unitary group $\RU_\Fn$ defined over a number field $F$.
Under the assumption that $\pi$ has a generic global Arthur parameter, we establish the non-vanishing of the central value of $L$-functions, $L(\frac{1}{2},\pi\times\chi)$, with a certain automorphic character $\chi$ of $\RU_1$, for the case of $\Fn=2,3,4$, and for the general $\Fn\geq 5$
by assuming Conjecture \ref{conj-subr} on certain refined properties of global Arthur packets. In consequence, we obtain some simultaneous
non-vanishing results for the central $L$-values by means of the theory of endoscopy.

\keywords{Automorphic $L$-functions, Central Values, Fourier Coefficients of Automorphic Forms, Endoscopic Classification of
Discrete Spectrum, Unitary Groups, Bessel Periods and Automorphic Descents}
\end{abstract}


\section{Introduction}

Let $F$ be a number field, and $G_n^*$ be an $F$-quasisplit unitary group that is either $\RU_{n,n}$ or $\RU_{n+1,n}$.
For an irreducible cuspidal automorphic representation $\pi$ of $G_n^*(\BA)$, where $\BA$ is the ring of adeles of $F$, we assume that $\pi$
has a generic global Arthur parameter (\cite{A13} and \cite{Mk15}) and consider the non-vanishing problem of $L(\frac{1}{2},\pi\times\chi)$,
the central value of the tensor product $L$-function of $\pi$ with an automorphic character $\chi$ of $\RU_1$.

When $G_1^*$ is an $F$-form of $\RU_2$, the non-vanishing of $L(\frac{1}{2},\pi\times\chi)$ can be detected by the non-vanishing of
certain periods of automorphic representations closely related to $\pi$, via the well-known Waldspurger formula (\cite{W85} and also \cite{Gross}).
Hence the non-vanishing in this situation can be established with certain arithmetic assumptions on the
archimedean local places and/or ramified finite local places (\cite{H93} for instance). For higher rank unitary groups, the non-vanishing of
the central value $L(\frac{1}{2},\pi\times\chi)$ has been formulated as a basic input towards the investigation of many interesting and important
problems related to the theory of motives, arithmetic geometry, and the theory of $p$-adic $L$-functions. We refer to some recent papers of M. Harris
(\cite{H13} and \cite{H16}) for more detailed account of the significance of such non-vanishing property.

The objective of this paper is to understand such a non-vanishing problem in a framework containing:
\begin{enumerate}
\item Fourier coefficients of automorphic forms associated to unipotent orbits as developed in \cite{J14} and \cite{JL-Cogdell},
\item the global zeta integrals of tensor product type as developed in \cite{JZ14} and \cite{JZ-BF},
\item the theory of twisted automorphic descents as developed in \cite{JZ-BF} and \cite{JZ-Howe}, and
\item the endoscopic classification of the discrete spectrum as in \cite{A13}, \cite{Mk15}, and \cite{KMSW}.
\end{enumerate}
The following conjecture represents a basic version of such a non-vanishing assertion as expected.

\begin{conj}\label{conj-nvn1}
For any given generic global Arthur parameter $\phi$ of $G_n^*$, there exists an automorphic character $\chi$ of $\RU_1$ such that
for every automorphic member $\pi$ in the global Arthur packet $\wt{\Pi}_\phi(G_n^*)$ associated to $\phi$,
the central value $L(\frac{1}{2},\pi\times\chi)$ of the tensor product $L$-function of $\pi$ and $\chi$ is nonzero.
\end{conj}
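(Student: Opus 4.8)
The plan is to reduce Conjecture \ref{conj-nvn1} to a property of the parameter $\phi$ alone and then to establish that property by combining the theory of twisted automorphic descents with a Rankin--Selberg period identity. Since all members of $\wt\Pi_\phi(G_n^*)$ have the same Arthur parameter $\phi$, the endoscopic classification of \cite{Mk15}, \cite{KMSW} identifies the tensor product $L$-function $L(s,\pi\times\chi)$ with the automorphic $L$-function $L(s,\phi\otimes\chi)$ attached to $\phi$; in particular it is independent of the member $\pi$. Hence it suffices to find, for the given generic parameter $\phi$ --- equivalently for the associated conjugate self-dual isobaric automorphic representation $\tau=\tau_1\boxplus\cdots\boxplus\tau_r$ of $\GL_\Fn$ over the quadratic extension defining the unitary groups --- a single automorphic character $\chi$ of $\RU_1$ with $L(\tfrac12,\phi\otimes\chi)\neq 0$. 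First I would fix the components of $\chi$ at the archimedean places and at the places ramifying in $\phi$ so that the global root number satisfies $\epsilon(\tfrac12,\phi\otimes\chi)=+1$: twisting changes the relevant local constants, and a torus strong-approximation argument realizes the desired local components globally, so the forced-vanishing obstruction can always be removed.

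To see that the central value is then genuinely nonzero, I would apply the $\chi$-twisted automorphic descent of \cite{JZ-BF}, \cite{JZ-Howe} to $\tau$. Built from the Fourier coefficients attached to the relevant unipotent orbit in the sense of \cite{J14}, \cite{JL-Cogdell}, this descent produces a cuspidal automorphic representation $\pi=\CD_\chi(\tau)$ lying in $\wt\Pi_\phi(G_n^*)$, whose characterizing property is the nonvanishing of the Bessel-type period against $\chi$ that enters the zeta integral below --- provided the descent is itself nonzero, which for generic $\tau$ is the first-occurrence property of the descent tower. Next I would invoke the global zeta integral of tensor-product type of \cite{JZ14}, \cite{JZ-BF}: it unfolds to exactly this period and represents $L(s,\pi\times\chi)=L(s,\phi\otimes\chi)$ as the period times a finite product of local zeta integrals. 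Therefore $L(\tfrac12,\phi\otimes\chi)\neq 0$ once each local integral is shown to be nonzero at $s=\tfrac12$ --- a local Gan--Gross--Prasad statement that I would arrange by a further choice of $\chi$ at the finitely many bad places, exploiting the same local flexibility used for the root number.

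For $\Fn=2,3,4$ all of this can be made unconditional: the innermost step of the descent lands on a unitary group of rank at most two, where the Waldspurger formula and the known low-rank cases of the Gan--Gross--Prasad conjecture apply and the packet $\wt\Pi_\phi$ is completely explicit. For general $\Fn\ge 5$ I would appeal to Conjecture \ref{conj-subr}: it supplies the refined description of the members of $\wt\Pi_\phi$ and of the interaction between unipotent-orbit Fourier coefficients and Arthur parameters that is needed to know that $\CD_\chi(\tau)$ is nonzero, cuspidal, and of the predicted parameter, and that $\chi$ can be chosen so that all the local functionals are simultaneously nonvanishing.

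The hard part is precisely this simultaneous control: the one character $\chi$ must at once (i) force the global root number to be $+1$, (ii) make every local zeta integral nonzero at the ramified and archimedean places, and (iii) keep the twisted descent $\CD_\chi(\tau)$ nonzero with the expected Arthur parameter. These requirements constrain $\chi$ in competing directions, and verifying that they can be met for higher-rank groups requires detailed knowledge of the non-generic members of the Arthur packet and of their Fourier coefficients --- which is the substance of Conjecture \ref{conj-subr}, and the reason the unconditional result is confined to $\Fn\le 4$.
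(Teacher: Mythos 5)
Your proposal takes a genuinely different route from the paper, but the route has a serious gap that prevents it from working as stated.

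The paper does not control the root number and does not construct a twisted descent $\CD_\chi(\tau)$ on $G_n^*$. Its key device, which is missing entirely from your proposal, is an \emph{endoscopic enlargement}: given a generic $\phi$ of $G_n^*$, Propositions \ref{prop-2n} and \ref{prop-2n1} produce a simple auxiliary parameter $\phi_2$ (resp.\ $\phi_1$) so that $\phi'=\phi\boxplus\phi_2$ is a generic global Arthur parameter of $\RU_{n+1,n+1}$ that is relevant for a \emph{non-quasisplit} pure inner form $\RU_{n+2,n}$. This is essential because Conjecture \ref{conj-subr} is a statement about the non-quasisplit group $\RU_{n+1,n-1}$ and says nothing directly about $G_n^*$; it cannot be ``appealed to'' for the quasisplit packet $\wt\Pi_\phi(G_n^*)$ as your last two paragraphs suggest, and it does not ``supply the refined description of the members of $\wt\Pi_\phi$'' --- it merely asserts the existence of one member with a particular (subregular) Fourier coefficient. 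Once $\Pi_0\in\wt\Pi_{\phi'}(\RU_{n+2,n})$ with nonzero subregular coefficient is obtained, the stabilizer is $\RU_1$, the Generic Summand Conjecture is automatic there (Proposition 7.4 of \cite{JZ-BF}), and the character $\chi$ is \emph{found}, not chosen in advance: it is a $\chi$ for which the Bessel pairing $\langle\CF^{\psi_{\CO_{\Fr}}}(\varphi_{\Pi_0}),\varphi_\chi\rangle$ is nonzero. Then Theorem 5.5 of \cite{JZ-BF} (one direction of GGP) gives $L(\tfrac12,\Pi_0\times\chi)\neq 0$, and the factorization $L(\tfrac12,\Pi_0\times\chi)=L(\tfrac12,\pi\times\chi)\cdot L(\tfrac12,\sigma\times\chi)$ from endoscopy yields the nonvanishing for the original $\pi$.

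By contrast, your proposal first fixes $\chi$ so that $\epsilon(\tfrac12,\phi\otimes\chi)=+1$. This only removes a necessary obstruction to nonvanishing; it proves nothing, and the paper makes no such argument. You then assert that the twisted descent $\CD_\chi(\tau)$ lies in $\wt\Pi_\phi(G_n^*)$, but the twisted descent as developed in \cite{JZ-BF}, \cite{JZ-Howe} generally produces a cuspidal representation on a pure inner form that depends on $\chi$ and the local signs, not on the quasisplit form itself; whether it lands on $G_n^*$ for a $\chi$ already pinned down by root-number constraints is exactly the kind of coherence question that the Hasse-principle argument of Proposition \ref{prop-2n} is carefully designed to settle, and you do not address it. Finally, for the unconditional cases $\Fn\le 4$ the paper does not invoke Waldspurger or low-rank GGP on a descended group of ``rank at most two''; Theorem \ref{th-U22} rests on an explicit Fourier-coefficient computation on $\RU_{2,2}$ (Proposition \ref{prop-u4subr}), using Li's nonexistence of singular cusp forms \cite{Li92}, a composition of $[2^2]$- and $[2]$-type coefficients, and an exchange of unipotent integrations via \cite[Corollary 7.1]{GRS11}; the cases $\RU_{1,1}$ and $\RU_{2,1}$ then follow from $\RU_{2,2}$ by endoscopy, not the other way around.
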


Note that in this paper, by a generic global Arthur parameter $\phi$, we mean a generic {\sl elliptic} global Arthur parameter
$\phi\in\wt{\Phi}_2(G_n^*)$ as explained in Section \ref{ssec-dsap}.

In reality, one may consider a more refined version of Conjecture \ref{conj-nvn1} by putting certain ramification restrictions on $\chi$ based on the
arithmetic nature of $\pi$
or ask for how many such characters $\chi$ the non-vanishing assertion still holds.
We will not get into those issues here.

By means of the theory of endoscopy, Conjecture \ref{conj-nvn1} has the following consequence on simultaneous non-vanishing of central $L$-values.

\begin{cor}\label{cor-smnv}
Let $\phi=(\tau_1,1)\boxplus\cdots\boxplus(\tau_r,1)$ be a generic global Arthur parameter of $G_n^*$ with $\tau_i$ being conjugate self-dual,
irreducible cuspidal automorphic representations of $\GL_{a_i}(\BA_E)$, where $E$ is a quadratic field extension of $F$, and $\sum_{i=1}^r a_i$ is equal to
either $2n$ or $2n+1$, depending on $G_n^*$. Then Conjecture \ref{conj-nvn1} implies that
there exists an automorphic character $\chi$ of $\RU_1$ such that
$$
L(\frac{1}{2},\tau_1\times\chi)\cdot L(\frac{1}{2},\tau_2\times\chi)\cdots L(\frac{1}{2},\tau_r\times\chi)\neq 0.
$$
\end{cor}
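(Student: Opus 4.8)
The plan is to deduce the simultaneous non-vanishing from the fact that the tensor product $L$-function of a member of a global Arthur packet is determined by its global Arthur parameter, hence factors according to the isobaric decomposition of $\phi$. First I would fix, once and for all, the automorphic character $\chi$ of $\RU_1$ provided by Conjecture \ref{conj-nvn1} for the given parameter $\phi=(\tau_1,1)\boxplus\cdots\boxplus(\tau_r,1)$. Since $\phi\in\wt{\Phi}_2(G_n^*)$ is a generic elliptic parameter, the endoscopic classification of the discrete spectrum (\cite{A13}, \cite{Mk15}, \cite{KMSW}) guarantees that the global Arthur packet $\wt{\Pi}_\phi(G_n^*)$ has an automorphic member; for definiteness one may take the globally generic cuspidal member $\pi$, which also arises from the automorphic descent construction attached to $\phi$.

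The key step is the identity
\[
L(s,\pi\times\chi)=\prod_{i=1}^{r} L(s,\tau_i\times\chi),
\]
where on the right $\tau_i\times\chi$ denotes the Rankin--Selberg twist over $E$ of the conjugate self-dual cuspidal representation $\tau_i$ of $\GL_{a_i}(\BA_E)$ by (the relevant extension to $\BA_E^\times$ of) the character $\chi$ of $\RU_1$. This holds because $\pi$ lies in $\wt{\Pi}_\phi(G_n^*)$, so its twisted tensor product $L$-function depends only on the parameter $\phi\otimes\chi=(\tau_1\otimes\chi,1)\boxplus\cdots\boxplus(\tau_r\otimes\chi,1)$; concretely one invokes that, for $\pi$ with a generic global Arthur parameter, the $L$-function $L(s,\pi\times\chi)$ defined by the global zeta integrals of \cite{JZ14} and \cite{JZ-BF} (equivalently by the Langlands--Shahidi method) coincides with the Galois-theoretic product attached to $\phi\otimes\chi$, including the local factors at the archimedean and ramified finite places. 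Granting this, Conjecture \ref{conj-nvn1} gives $L(\frac{1}{2},\pi\times\chi)\neq 0$, and therefore each factor $L(\frac{1}{2},\tau_i\times\chi)$ is nonzero, which is exactly the assertion of the corollary.

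I expect the only genuine obstacle to be the bookkeeping in the displayed $L$-function identity: matching the $L$-function of $\pi$ coming from the machinery in items (1)--(3) of the introduction with the product over the isobaric constituents of $\phi$, with the correct treatment of the twist by $\chi$ viewed as a character of $\RU_1$ versus a Hecke character, and of the local $L$-factors at the bad places. The existence of an automorphic member of $\wt{\Pi}_\phi(G_n^*)$ is immediate from the cited classification results (or from the descent constructions of \cite{JZ-BF} and \cite{JZ-Howe}), and the case $r=1$ is a tautology, so no further input is needed there.
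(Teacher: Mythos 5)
Your proposal is correct and takes essentially the same approach as the paper: the paper treats Corollary \ref{cor-smnv} as an immediate consequence of Conjecture \ref{conj-nvn1} together with the endoscopic classification, via exactly the factorization $L(s,\pi\times\chi)=\prod_i L(s,\tau_i\times\chi)$ that follows from the convention $L(s,\pi\times\chi)=L(s,\phi\times\phi_\chi)$ (see the analogous step in the proof of Theorem \ref{th-nveven}, and the later Theorem \ref{th-nvgl}). Your fixing of $\chi$ from the conjecture, choosing the (nonempty, e.g.\ generic) member of $\wt{\Pi}_\phi(G_n^*)$, and appealing to the isobaric decomposition of $\phi$ is precisely the intended argument.
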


As explained in Section \ref{sec-unu1}, the condition for $\tau_1,\cdots,\tau_r$ to make a generic global Arthur parameter $\phi$ can be checked through
the local ramification and sign conditions.

The first main result of this paper is to prove Conjecture \ref{conj-nvn1} for $F$-quasisplit unitary groups: $\RU_{1,1}$, $\RU_{2,1}$ and $\RU_{2,2}$.
\begin{thm}\label{th-U22}
Conjecture \ref{conj-nvn1} is true for $F$-quasisplit unitary groups: $\RU_{1,1}$, $\RU_{2,1}$ and $\RU_{2,2}$.
\end{thm}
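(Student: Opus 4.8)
The plan is to establish Conjecture \ref{conj-nvn1} for the three small unitary groups by exploiting the explicit structure of generic elliptic global Arthur parameters in low rank, combined with the twisted automorphic descent machinery and the tensor-product zeta integrals referenced in the introduction. First I would enumerate the possible shapes of a generic elliptic Arthur parameter $\phi \in \wt{\Phi}_2(G_n^*)$ for each of $\RU_{1,1}$, $\RU_{2,1}$, $\RU_{2,2}$: for $\RU_{1,1}$ one has $\phi=(\tau,1)$ with $\tau$ a conjugate self-dual cuspidal representation of $\GL_2(\BA_E)$ or $\phi=(\chi_1,1)\boxplus(\chi_2,1)$ with conjugate self-dual Hecke characters; for $\RU_{2,1}$ the parameter is a sum partitioning $3$; and for $\RU_{2,2}$ a sum partitioning $4$. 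In each case the members $\pi$ of $\wt{\Pi}_\phi(G_n^*)$ are either cuspidal or come from residues of Eisenstein series, and the relevant $L$-function $L(s,\pi\times\chi)$ factors (by the endoscopic classification) as a product of Rankin–Selberg $L$-functions $\prod_i L(s,\tau_i\times\chi)$ over $\GL_{a_i}\times\GL_1$.

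The core of the argument is to reduce the non-vanishing of $L(\tfrac12,\pi\times\chi)$ to the non-vanishing of a Bessel period — equivalently, to the non-triviality of a twisted automorphic descent — of a representation closely attached to $\pi$. Concretely, I would invoke the global zeta integral of tensor-product type of \cite{JZ14}, \cite{JZ-BF}, which represents $L(s,\pi\times\chi)$ and whose non-vanishing at $s=\tfrac12$ is governed by a Fourier coefficient associated to the appropriate unipotent orbit (in the sense of \cite{J14}, \cite{JL-Cogdell}) against the character $\chi$. Because $\phi$ is generic, $\pi$ (or rather the generic member of the packet, which suffices since the $L$-function depends only on $\phi$) has the expected Whittaker/Fourier support, so these Fourier coefficients are non-zero for a suitable choice of $\chi$. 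The theory of twisted automorphic descents of \cite{JZ-BF}, \cite{JZ-Howe} then produces, from the cuspidal datum $\tau = \tau_1\boxplus\cdots\boxplus\tau_r$ on the general linear side, a concrete automorphic form on $\RU_\Fn$ realizing this period and certifying the non-vanishing of the central value; in the low-rank cases $\Fn=2,3,4$ the descent construction is unconditional, which is precisely why Conjecture \ref{conj-subr} is not needed here.

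I would then handle the individual cases. For $\RU_{1,1}$, $\phi=(\tau,1)$ reduces to a single factor $L(\tfrac12,\tau\times\chi)$ and the argument is essentially the Waldspurger-formula mechanism alluded to in the introduction, but now packaged so that the non-vanishing holds for the whole packet after choosing $\chi$ to kill the local obstructions; the $\phi=(\chi_1,1)\boxplus(\chi_2,1)$ case is elementary via non-vanishing of Hecke $L$-functions and choice of $\chi$. For $\RU_{2,1}$ and $\RU_{2,2}$, one combines the descent from $\GL_3$, resp. $\GL_4$, over $E$, with the observation that at each finite place one can arrange the local component $\chi_v$ so that the local zeta integral is non-zero, while at the (finitely many) bad places the local Bessel functional is non-vanishing for generic representations by the local theory. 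The main obstacle will be the archimedean and ramified-place analysis: showing that the local components of the global character $\chi$ can be chosen \emph{simultaneously} to make every local zeta integral (or local Bessel functional) non-zero, while still patching to a genuine automorphic character of $\RU_1$. I expect this to be handled by a density/approximation argument on the idele class group of $E$ together with the explicit local computations available in rank $\le 4$, but it is the step where care is required; everything downstream is then a formal consequence of the endoscopic factorization of the $L$-function and the descent identity.
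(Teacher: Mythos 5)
There is a genuine gap at the heart of your proposal. You write that ``because $\phi$ is generic, $\pi$ \dots has the expected Whittaker/Fourier support, so these Fourier coefficients are non-zero for a suitable choice of $\chi$.'' This is precisely the hard step, and it does not follow from genericity. For $\RU_{2,2}$, genericity gives the non-vanishing of the Whittaker--Fourier coefficient (associated to the regular partition $[4]$), but the coefficient that must be shown to be nonzero is the one attached to the \emph{subregular} partition $[31]$, and a nonzero wave-front set at the top orbit does not in general imply non-vanishing at the intermediate orbit. This is exactly the content of Proposition~\ref{prop-u4subr}, and the paper needs a careful three-step argument to establish it: first one shows (Lemma~\ref{lm:U-22}) that genericity forces a nonzero $[2^2]$-type coefficient at the specific anisotropic point $\vartheta=\diag(1,-1)$, using Li's non-singularity theorem and the transitivity of the Levi action on Hermitian forms of a given discriminant; then one shows (Lemma~\ref{lm:2comp22}) by a Fourier expansion along a unipotent subgroup of the stabilizer $\RU_{1,1}$ that some composite coefficient $\CF_{[2;\alpha]\circ[2^2;\vartheta]}$ with $\alpha\ne 0$ is nonzero (this uses the rapid decay of the $[2^2]$-coefficient on $\RU_{1,1}(\BA)$ to rule out the degenerate $\alpha=0$ term); and finally one identifies this composite with the $[31]$-type coefficient at an anisotropic vector $v_\alpha$ via a root-exchange argument in the style of \cite[Corollary 7.1]{GRS11} (Lemma~\ref{lm:comp-31}). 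The paper explicitly flags that it is the $F$-rational structure of these Fourier coefficients that blocks the argument in higher rank, which is why Theorem~\ref{th-U22} stops at $\Fn\le 4$. Your proposal elides this entire chain by assertion.

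A secondary issue: your plan to choose $\chi$ ``simultaneously'' so that every local zeta integral is nonzero, via a density/approximation argument on the idele class group, is not how the paper (or the period-to-$L$-value mechanism) works, and it will not close the argument on its own. The actual logic is: once the global subregular Fourier coefficient is shown to be nonzero, Conjecture 2.3 of \cite{JZ-BF} (proved in this rank, Proposition 7.4 of loc.\ cit.) produces a $\sigma=\chi$ on $\RU_1$ with nonzero $L^2$-pairing, and then the proved direction of the global Gan--Gross--Prasad conjecture (\cite[Theorem 5.5]{JZ-BF}) outputs the non-vanishing of $L(\frac12,\pi\times\chi)$. The character $\chi$ comes out of the global period; it is not engineered place-by-place. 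Local non-vanishing of zeta integrals is necessary but far from sufficient, since the global period could still vanish. Finally, the paper handles $\RU_{1,1}$ and $\RU_{2,1}$ not by running the argument separately but by endoscopic transfer to $\RU_{2,2}$ (Section~\ref{ssec-lrug}), which is simpler and avoids redoing the Fourier-coefficient analysis in those groups; your case-by-case treatment is not wrong in spirit but duplicates work.
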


This theorem will be proved in Section \ref{sec-n4}. We remark that at this moment, we are not able to extend certain technical arguments in the proof for those lower rank cases
to the higher rank situation, due to the complication of the $F$-rational structure of the Fourier coefficients associated to certain unipotent orbits
involved in the arguments in the proof in Section \ref{sec-n4}.

However, with the assumption of the following very reasonable conjecture on a refined structure on certain global Arthur packets,
the other main result of this paper proves Conjecture \ref{conj-nvn1} for the general situation.

Take $G_n$ to be a unitary group $\RU_{n+1,n-1}$ defined over $F$ with $F$-rank $n-1$.
As discussed in Section \ref{ssec-fcpt}, the partition $\udl{p}=[(2n-1)1]$
corresponds to the $F$-stable subregular unipotent orbit of $G_n^*:=\RU_{n,n}$, which is the largest possible unipotent orbit in $G_n$.

\begin{conj}\label{conj-subr}
Let $\phi$ be a generic global Arthur parameter of $\RU_{n,n}$ that is $G_n$-relevant. Then there exists an automorphic member $\pi_0$ in the
global Arthur packet $\wt{\Pi}_\phi(G_n)$ with the property that $\pi_0$ has a nonzero Fourier coefficient associated to the subregular partition
$\udl{p}=[(2n-1)1]$.
\end{conj}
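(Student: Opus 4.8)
The plan is to establish the existence of a member $\pi_0 \in \wt\Pi_\phi(G_n)$ with a nonzero subregular Fourier coefficient by combining the endoscopic classification of the discrete spectrum of $\RU_{n+1,n-1}$ with a Fourier--Jacobi/Bessel period argument, in the spirit of the twisted automorphic descent. First I would fix the generic global Arthur parameter $\phi = (\tau_1,1) \boxplus \cdots \boxplus (\tau_r,1)$ of $\RU_{n,n}$ and, using the $G_n$-relevance hypothesis together with the local-global compatibility of the packets $\wt\Pi_\phi$ (Mok, KMSW), select the member $\pi_0$ whose local components at all places are the \emph{generic} members of the corresponding local Arthur (Vogan) packets $\wt\Pi_{\phi_v}(G_{n,v})$. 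Relevance is exactly the condition that ensures this choice of local parameters satisfies the global sign/multiplicity relation so that $\pi_0$ genuinely occurs in the discrete (in fact cuspidal, by genericity of $\phi$) spectrum of $G_n(\BA)$. Thus $\pi_0$ is globally generic in the sense appropriate to $G_n$: each local component is $\psi_v$-generic for a suitable additive character, so $\pi_0$ has a nonzero Whittaker--Fourier coefficient \emph{with respect to the quasi-split inner form}, but on $G_n = \RU_{n+1,n-1}$ itself the top unipotent orbit is $\udl p = [(2n-1)1]$, not the regular one.

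The key step is then to descend the genericity of $\pi_0$ to the non-vanishing of the Fourier coefficient attached to the $F$-rational unipotent orbit $\udl p = [(2n-1)1]$. I would proceed as in the theory of Fourier coefficients associated to unipotent orbits of \cite{J14}, \cite{JL-Cogdell}: show that the wavefront set of $\pi_0$ — controlled locally by the generic members of the local packets via the work on local descent and the structure of generic representations — forces a nonzero coefficient for the largest orbit that meets $G_n(F)$, which by the discussion in Section \ref{ssec-fcpt} is precisely the $F$-stable subregular orbit $[(2n-1)1]$. Concretely, one unfolds the (vanishing, by cuspidality and rank reasons) regular Fourier coefficient over the quasi-split form, transports along the inner twist at the level of the relevant unipotent periods, and uses the root-exchange / Fourier-expansion technique to exhibit a nonzero period supported on the stabilizer datum defining $[(2n-1)1]$. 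The compatibility of local Whittaker models with the twisted descent construction of \cite{JZ-BF}, \cite{JZ-Howe} should supply the nonvanishing of the corresponding local (degenerate) Whittaker functionals at every place, which then globalizes.

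The main obstacle — and the reason this is stated as a conjecture rather than a theorem — is precisely the $F$-rational structure of the unipotent orbit $[(2n-1)1]$ on the non-quasi-split form $\RU_{n+1,n-1}$: the orbit is $F$-stable but its $F$-rational points, and hence the relevant unipotent subgroups and the characters on them, depend subtly on the Hasse invariant of the Hermitian form, and the passage from ``generic local components'' to ``nonzero global coefficient for the subregular orbit'' requires controlling these rational structures uniformly in $n$. For $\Fn = 2,3,4$ this is handled by hand in Section \ref{sec-n4} (where one can enumerate the orbits and their rational forms explicitly), but for general $n$ the combinatorics of the nilpotent $G_n(F)$-orbits inside the $F$-stable orbit $[(2n-1)1]$, and the matching of Arthur-packet multiplicities with the support of Fourier coefficients, is exactly the input that one cannot yet extract from the endoscopic classification alone. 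I would expect a full proof to require either a general ``exhaustion'' principle for Fourier coefficients of cuspidal members of generic Arthur packets on pure inner forms, or an extension of the local descent identities of \cite{JZ-Howe} to the twisted, non-quasi-split setting — both of which lie beyond the tools currently available, which is why the statement is isolated as Conjecture \ref{conj-subr} and used as a hypothesis in the main theorem for $\Fn \geq 5$.
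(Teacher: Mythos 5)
The statement you are addressing is Conjecture \ref{conj-subr}, and the paper does \emph{not} prove it; it is stated as a hypothesis, used as input to Theorem \ref{th-main}, and explicitly set aside as an open problem. Your closing paragraph recognizes this correctly, and the structural reasons you give (rational structure of the nilpotent $G_n(F)$-orbits inside the $F$-stable orbit, lack of a general exhaustion principle for Fourier coefficients on pure inner forms) match the paper's own discussion of why this is isolated as a conjecture. Since there is no proof in the paper to compare against, the only useful thing to do is to flag where your proposed \emph{approach} would run into trouble, so let me do that.

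First, the step ``select the member $\pi_0$ whose local components at all places are the generic members of the local Vogan packets $\wt{\Pi}_{\phi_v}$'' does not produce a representation of $G_n = \RU_{n+1,n-1}$. The generic member of a local Vogan packet lives on the \emph{quasi-split} inner form; a representation of $G_n(F_v)$ at a place where $G_n$ is not quasi-split is never Whittaker-generic, because $G_n$ has no Borel over $F_v$ there. To obtain a member of $\wt{\Pi}_\phi(G_n)$ one must instead pick, at each place where $G_n$ is not quasi-split, a non-generic member of the local packet (determined by a nontrivial character of the local component group), and then verify the global multiplicity condition. There is no reason these local choices, dictated by the geometry of the Hermitian form, should be the ones with the largest local wavefront set, and the whole difficulty of the conjecture is hidden in this mismatch. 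Similarly, ``transporting along the inner twist at the level of the relevant unipotent periods'' is not a defined operation: the Fourier coefficient $\CF^{\psi_{\CO_{\udl{p}}}}$ is defined by an integral over a unipotent subgroup of $G_n(F)\bks G_n(\BA)$ against an $F$-rational character, and these data live on $G_n$ itself; they are not pulled back or pushed forward from $G_n^*$.

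Second, your remark that ``for $\Fn = 2,3,4$ this is handled by hand in Section \ref{sec-n4}'' is a misreading of the paper. Section \ref{sec-n4} proves Conjecture \ref{conj-nvn1} for the \emph{quasi-split} groups $\RU_{1,1}$, $\RU_{2,1}$, $\RU_{2,2}$, and the key technical step there is Proposition \ref{prop-u4subr}, which concerns the subregular coefficient of a generic cuspidal representation on the quasi-split $\RU_{2,2}$, i.e.\ it is the $n=2$ case of Conjecture \ref{conj-subr2n}, not of Conjecture \ref{conj-subr}. The paper states this explicitly: ``the proof for $\RU_{2,2}$ is direct, without referring to Conjecture \ref{conj-subr} for $\RU_{4,2}$.'' Conjecture \ref{conj-subr} --- which concerns the non-quasi-split $\RU_{n+1,n-1}$ --- is not proven anywhere in the paper, even in low rank. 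Finally, even granting local control on wavefront sets, the inference from local nonvanishing to nonvanishing of a \emph{global} Fourier coefficient does not follow formally; this is itself a delicate globalization step, and it is part of the content of what remains conjectural.
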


Note that a global Arthur parameter $\phi$ of $G_n^*$ is called {\sl $G_n$-relevant} if the global Arthur packet $\wt{\Pi}_\phi(G_n)$ of $G_n$ is not empty.
A well-known conjecture of F. Shahidi (\cite{Sh90}) asserts that for any quasi-split reductive algebraic group defined over a local field,
each tempered local $L$-packet has a generic member. The global version of the Shahidi conjecture is that if any global Arthur packet of
quasi-split classical groups $G^*$ defined over a number field $F$ is associated to a generic global Arthur parameter, then it has a member with a non-zero Whittaker-Fourier coefficient. This global conjecture was verified
via the automorphic descents of Ginzburg-Rallis-Soudry (\cite{GRS11}) in Section 3 of \cite{JL-Cogdell} for
quasisplit classical groups based on the endoscopic classification of the discrete spectrum of J. Arthur (\cite{A13}) and of C.-P. Mok (\cite{Mk15}).
As explained in Section \ref{ssec-fcpt}, for the quasi-split classical group $G^*_n$, the Whittaker-Fourier coefficients of automorphic representations are associated to the regular unipotent orbit, which is of course the largest possible unipotent orbit for $G^*_n$. However, the largest unipotent orbit
in the non-quasi-split classical group $G_n$ corresponds to the subregular partition $\udl{p}=[(2n-1)1]$ of $2n$, which is associated to the subregular
unipotent orbit of the quasi-split classical group $G_n^*$. Hence Conjecture \ref{conj-subr} asserts that any global Arthur packet of $G_n$ with
a generic, $G_n$-relevant global Arthur parameter should contain a member with a non-zero Fourier coefficient associated to the largest possible unipotent
orbit of $G_n$. In this sense, Conjecture \ref{conj-subr} can be regarded as a natural extension of the global version of the Shahidi conjecture from quasi-split classical groups to general classical groups. We refer to Conjecture \ref{conj-main} for a general situation of unitary groups.

The second main result of this paper is as follows.

\begin{thm}\label{th-main}
Assume that Conjecture \ref{conj-subr} holds. Then Conjecture \ref{conj-nvn1} is true for all $F$-quasisplit unitary groups.
\end{thm}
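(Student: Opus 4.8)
The plan is to reduce the non-vanishing of $L(\frac{1}{2},\pi\times\chi)$ for an arbitrary automorphic member $\pi$ of a generic global Arthur packet $\wt{\Pi}_\phi(G_n^*)$ to the existence of a single member, on a possibly different pure inner form, carrying a prescribed Fourier coefficient. First I would recall the Rankin--Selberg-type global zeta integral of tensor product type from \cite{JZ14} and \cite{JZ-BF}, which represents $L(s,\pi\times\chi)$ and whose non-vanishing at $s=\frac{1}{2}$ is equivalent to the non-vanishing of a certain Bessel period of $\pi$ against $\chi$. The twisted automorphic descent construction of \cite{JZ-BF} and \cite{JZ-Howe} then produces, starting from $\pi$ (equivalently, from the cuspidal datum $(\tau_1,\ldots,\tau_r)$ underlying $\phi$ together with the character $\chi$), a cuspidal automorphic representation $\pi_0$ on the pure inner form $G_n=\RU_{n+1,n-1}$ whose Fourier coefficient attached to the subregular partition $\udl{p}=[(2n-1)1]$ is governed by exactly that Bessel period. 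Conversely, the existence of an automorphic $\pi_0$ in $\wt{\Pi}_\phi(G_n)$ with a non-zero $\udl{p}$-Fourier coefficient forces, through the reciprocal direction of the descent (identifying the image of the descent and running the zeta integral in reverse), the non-vanishing of $L(\frac{1}{2},\pi\times\chi)$ for a choice of $\chi$ depending only on $\phi$; this is the mechanism already used in Section \ref{sec-n4} to handle $\RU_{1,1},\RU_{2,1},\RU_{2,2}$.

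With this dictionary in place, the argument runs as follows. Given the generic elliptic global Arthur parameter $\phi\in\wt{\Phi}_2(G_n^*)$, one first checks that $\phi$ is $G_n$-relevant, i.e.\ that $\wt{\Pi}_\phi(G_n)$ is non-empty: this follows from the endoscopic classification of the discrete spectrum for unitary groups (\cite{A13}, \cite{Mk15}, \cite{KMSW}) together with the local-global compatibility of Arthur packets, since the relevant local sign conditions can be matched by the freedom in choosing the local components — here one selects the character $\chi$ of $\RU_1$ so that the local root-number obstructions vanish, exactly as in the analysis preceding Corollary \ref{cor-smnv} and in Section \ref{sec-unu1}. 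Once $\phi$ is known to be $G_n$-relevant, Conjecture \ref{conj-subr} supplies an automorphic member $\pi_0\in\wt{\Pi}_\phi(G_n)$ with a non-zero Fourier coefficient associated to $\udl{p}=[(2n-1)1]$, the largest unipotent orbit of $G_n$. Feeding this $\pi_0$ into the twisted automorphic descent and the tensor product zeta integral, I would deduce $L(\frac{1}{2},\pi_0\times\chi)\neq 0$, and then propagate the non-vanishing to every automorphic member of $\wt{\Pi}_\phi(G_n^*)$ using that the tensor product $L$-function $L(s,\pi\times\chi)$ depends only on the global Arthur parameter $\phi$ (and on $\chi$), not on the particular member of the packet, since it is the product $\prod_i L(s,\tau_i\times\chi)$ attached to $\phi$. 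This last point is precisely what makes Conjecture \ref{conj-nvn1} a statement about the parameter rather than about individual representations, and it is where the ``simultaneous'' phenomenon of Corollary \ref{cor-smnv} originates.

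The main obstacle I expect is establishing the precise identity linking the subregular Fourier coefficient on $G_n$ to the Bessel period computing $L(\frac{1}{2},\pi\times\chi)$, in full generality of $\Fn$. For the low-rank cases in Section \ref{sec-n4} this rests on an explicit understanding of the $F$-rational structure of the unipotent orbit $\udl{p}$ and of the associated Jacquet modules, and the unipotent orbit combinatorics becomes genuinely more involved as $\Fn$ grows — this is exactly the technical difficulty flagged in the remark after Theorem \ref{th-U22}. Assuming Conjecture \ref{conj-subr}, however, one is handed the needed member $\pi_0$ with the correct Fourier coefficient for free, so the remaining work is to run the descent/zeta-integral machinery (already developed in \cite{JZ-BF}, \cite{JZ-Howe}) in the reverse direction and to verify that the resulting non-vanishing is insensitive to the choice of packet member. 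A secondary point requiring care is the choice of $\chi$: one must verify that a single $\chi$ of $\RU_1$ can be chosen to simultaneously guarantee $G_n$-relevance of $\phi$ and the convergence/non-vanishing of the relevant period, which again reduces to a finite set of local root-number conditions that can be met, as in Section \ref{sec-unu1}.
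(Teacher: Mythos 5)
Your proposal misses the essential step of the paper's argument, and it contains a claim that is actually false. You propose to apply Conjecture~\ref{conj-subr} directly to the pure inner form $G_n=\RU_{n+1,n-1}$ at the \emph{same} rank as $G_n^*$, and you say one can ensure that $\phi$ is $G_n$-relevant ``by selecting the character $\chi$ of $\RU_1$ so that the local root-number obstructions vanish.'' But the $G_n$-relevance of $\phi$ is a property of $\phi$ alone; it has nothing to do with the choice of the twisting character $\chi$, and in general it simply fails. For a generic parameter $\phi\in\wt{\Phi}_2(\RU_{n,n})$ there is no guarantee that $\wt{\Pi}_\phi(\RU_{n+1,n-1})$ is non-empty, so Conjecture~\ref{conj-subr} cannot be invoked at this rank. (Your proposal also never addresses the odd case $G_n^*=\RU_{n+1,n}$, where the needed inner form is not even of the form $\RU_{m+1,m-1}$ covered by Conjecture~\ref{conj-subr}.)

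The paper resolves exactly this obstruction by going \emph{up one rank}, which is the content of Propositions~\ref{prop-2n} and~\ref{prop-2n1} and is the crux of the proof. Starting from $\phi$ on the quasisplit $G_n^*$, one constructs an auxiliary parameter $\phi_2$ of $\RU_{1,1}$ (if $\Fn=2n$) or $\phi_1$ of $\RU_1$ (if $\Fn=2n+1$), with carefully arranged local behavior at two inert unramified places, so that the isobaric sum $\phi'=\phi\boxplus\phi_2$ (resp. $\phi\boxplus\phi_1$) is a generic parameter of $\RU_{n+1,n+1}$ that \emph{is} $\RU_{n+2,n}$-relevant; the non-trivial center $\CZ\subset\CS_{\phi'}$ and a parity count at those two places guarantee the existence of a coherent cuspidal member on the non-quasisplit form. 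Only then does Conjecture~\ref{conj-subr} apply, now to $\RU_{n+2,n}$ and the subregular partition $[(2n+1)1]$, and the Generic Summand input is unconditional here by Proposition~7.4 of \cite{JZ-BF}. This produces $\Pi_0\in\wt{\Pi}_{\phi'}(\RU_{n+2,n})$ with a non-vanishing subregular Bessel module, hence a character $\chi$ with $L(\frac12,\Pi_0\times\chi)\neq0$ via Theorem~\ref{th-nva}. Finally, the endoscopic factorization $L(\frac12,\Pi_0\times\chi)=L(\frac12,\pi\times\chi)\cdot L(\frac12,\sigma\times\chi)$ for any $\pi\in\wt{\Pi}_\phi(G_n^*)$ and $\sigma\in\wt{\Pi}_{\phi_2}(\RU_{2,0})$ yields $L(\frac12,\pi\times\chi)\neq0$. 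Your observation that the $L$-value depends only on $\phi$ and $\chi$ (so the conclusion holds across the packet) is correct and is indeed used at the last step, but the bootstrap-in-rank and the construction of the auxiliary parameter are the essential new ingredients your proposal does not supply.
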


Some comments on the structure of the proofs of Theorems \ref{th-U22} and \ref{th-main} are in order, which also explain the structure of the rest of this paper.

After introducing in Section \ref{sec-FCDS} the basics about unitary groups, the discrete spectrum and endoscopic classifications, and Fourier
coefficients associated to unipotent orbits or partitions as needed for this paper, we make Conjecture \ref{conj-main} for general unitary groups,
which is defined by a non-degenerate finite dimensional Hermitian vector space over a quadratic field extension $E$ of the number field $F$, and
is denoted by $G_n$ in general there and for the rest of this paper.
This conjecture
asserts the existence and the structure of the Fourier coefficients associated to the largest possible unipotent orbits of $G_n$, for
irreducible cuspidal automorphic representations of $G_n(\BA)$ with generic global Arthur parameters. Conjecture \ref{conj-main} is the natural extension
of the global version of the Shahidi conjecture to the most general situation of those unitary groups. Of course, there is a similar version for general
reductive groups, about which we omit the discussion here. With help of the {\sl Generic Summand Conjecture} (\cite[Conjecture 2.3]{JZ-BF}),
we deduce, by the {\sl one direction of the global Gan-Gross-Prasad conjecture} that is proved in \cite[Theorem 5.5]{JZ-BF},
and by the assumption of Conjecture \ref{conj-main}, the non-vanishing of the central value of general tensor product $L$-functions for unitary groups in
Theorem \ref{th-nva}.

It is desirable to prove the non-vanishing of central $L$-values without the assumption of Conjecture 2.3 of \cite{JZ-BF},
or even without the assumption of Conjecture \ref{conj-subr} or more generally, Conjecture \ref{conj-main}. At this moment we can only do so for
$\RU_{1,1}$, $\RU_{2,1}$ and $\RU_{2,2}$, which is Theorem \ref{th-U22} and will be treated in Section \ref{sec-n4}.

In Section \ref{sec-unu1}, we follow the main ideas in the proof in Section \ref{sec-CGAP} and treat the special case where Conjecture \ref{conj-main}
reduces to Conjecture \ref{conj-subr}.
In this case, the tensor product $L$-functions are $L(s,\pi\times\chi)$ with $\pi$ being irreducible cuspidal automorphic representation of $\RU_{n+1,n-1}$
with generic global Arthur parameter $\phi$ of $\RU_{n,n}$ that is $\RU_{n+1,n-1}$-relevant, and $\chi$ being an automorphic character of $\RU_1$.
Meanwhile, the partition involved in Conjecture \ref{conj-main} for the situation, which is now Conjecture \ref{conj-subr}, is the subregular partition.
By Proposition 7.4 of \cite{JZ-BF}, Conjecture 2.3 of \cite{JZ-BF} holds in this case.
Hence we only take the assumption of Conjecture \ref{conj-subr} in our discussion for
Section \ref{sec-unu1}, where we prove Theorem \ref{th-main}.

With the endoscopic classification for general unitary groups (\cite{KMSW}) in mind,
it is enough to treat the non-vanishing of the central values $L(\frac{1}{2},\pi\times\chi)$ for $F$-quasisplit unitary groups. For technical reasons, we
treat even unitary groups (in Section \ref{ssec-Fneven}) and odd unitary groups (in Section \ref{ssec-Fnodd}), separately. In order to apply the assumption
of Conjecture \ref{conj-subr} to the situation under consideration, which is about $F$-quasisplit unitary groups, we prove two technical propositions
(Propositions \ref{prop-2n} and \ref{prop-2n1}). For any given generic global Arthur parameter $\phi$ of $G_n^*$, one is able to find a (generic) global
Arthur parameter $\phi_2$ of a non-quasisplit $\RU_2$ if $G_n^*=\RU_{n,n}$, and $\phi_1$ of $\RU_1$ if $G_n^*=\RU_{n+1,n}$, respectively, such that
the isobaric sums $\phi\boxplus\phi_2$ and $\phi\boxplus\phi_1$, in the corresponding cases, are generic global Arthur parameters of the non-quasisplit
unitary group $\RU_{n+2,n}$, which is an $F$-rational form of the $F$-quasisplit unitary group $G^*_{n+1}$, with $F$-rank $n$.
Now, we are able to use the assumption of Conjecture \ref{conj-subr} for $\RU_{n+2,n}$.
Hence Theorem \ref{th-main} is given
by Theorems \ref{th-nveven} and \ref{th-nvodd} according to the even and odd quasisplit unitary groups, respectively. In consequence, we prove in Section \ref{ssec-mnv}
the simultaneous non-vanishing for general linear groups (Theorem \ref{th-nvgl}, i.e. Corollary \ref{cor-smnv}) and for endoscopy groups (Theorem \ref{th-nvug}), using the endoscopic classification of the discrete spectrum of quasisplit unitary groups (\cite{Mk15}).

In Section \ref{sec-n4}, we take up an approach that is different from that in Section \ref{sec-CGAP}. However, this approach only
treats the quasisplit unitary groups $\RU_{1,1}, \RU_{2,1}$ and $\RU_{2,2}$ without the assumption of Conjecture \ref{conj-subr}
and Conjecture 2.3 of \cite{JZ-BF}.
We first prove the non-vanishing for $\RU_{2,2}$ (Theorem \ref{thm:main-U}). The cases of $\RU_{1,1}$ and $\RU_{2,1}$ can be deduced in Section \ref{ssec-lrug}
from Theorem \ref{thm:main-U} by means of the theory of endoscopy.
By the {\sl one direction of the global Gan-Gross-Prasad conjecture} that is proved in \cite[Theorem 5.5]{JZ-BF}, it is enough to prove Proposition \ref{prop-u4subr}, which asserts
that every generic cuspidal automorphic representation $\pi$ of $\RU_{2,2}$ has a non-zero Fourier coefficient associated to the subregular partition
$\udl{p}_\subr=[31]$. The proof of Proposition \ref{prop-u4subr} is a long and explicit calculation of Fourier coefficients associated to unipotent orbits
which are not the maximal one in the wave front set of the irreducible cuspidal automorphic representation $\pi$ under consideration. We refer to Section \ref{sec-proofu4} for the detailed explanation of the structure of the proof. Because of certain technical reasons, we are not able to carry out
such an argument for higher rank unitary groups, which would lead to the proof of Conjecture \ref{conj-subr2n}.
We note that the proof for $\RU_{2,2}$ is direct, without referring to Conjecture \ref{conj-subr} for
$\RU_{4,2}$. Hence the proof in Section \ref{sec-n4} is in different nature from what is represented in Section \ref{sec-unu1}.
However, it seems that the use of Conjecture \ref{conj-subr} in the proof of such non-vanishing results is more conceptual to the theory.
We will come back in our future work to discuss our understanding of Conjecture \ref{conj-subr} or even more generally Conjecture \ref{conj-main}.

Finally, the authors would like to thank M. Harris for useful email exchanges related to the topic of this paper and for his encouragement for us to write
up this paper, and also to thank the referees for the helpful comments.

\section{Fourier Coefficients and Discrete Spectrum}\label{sec-FCDS}

We recall the basics of the endoscopy structure, global Arthur packets, and the Fourier coefficients of automorphic forms on unitary groups from
\cite{JZ-BF}, \cite{Mk15} and \cite{KMSW}. The notation will follow from our previous work \cite{JZ-BF}, since the results from \cite{JZ-BF} will play indispensable roles in
the proof of the main results of this paper.

\subsection{Unitary groups}\label{ssec-ug}
Let $F$ be a number field and $E=F(\sqrt{\varsigma})$ be a quadratic field extension of $F$,
with $\varsigma$ a non-square in $F^\times$. The Galois group
$\Gamma_{E/F}=\Gal(E/F)=\left<\iota\right>$ is generated by a unique non-trivial element $\iota$. \label{pg:iota}
Let $(V,q)$ be an $\Fn$-dimensional non-degenerate Hermitian vector space over $E$. Denote by $G_n=\RU(V,q)$ the unitary group, with $n=[\frac{\Fn}{2}]$.
Let $G_n^*=\RU(V^*,q^*)$ be an $F$-quasisplit group of the same type.
Then $G_n$ is a pure inner form of $G_n^*$ over the field $F$ (\cite{V93} and \cite{GGP12}).

Let $(V_0,q)$ be the $F$-anisotropic kernel of $(V,q)$ with dimension $\Fd_0=\Fn-2\Fr$, where the $F$-rank $\Fr=\Fr(G_n)$ of $G_n$ is
the same as the Witt index of $(V,q)$.
Let $V^{+}$  be a maximal totally isotropic subspace of $(V,q)$, with $\{e_{1},\dots,e_{\Fr}\}$ being its basis.
Choose $E$-linearly independent vectors $\{e_{-1},\dots,e_{-\Fr}\}$ in $(V,q)$
such that
$
q(e_{i},e_{-j})=\delta_{i,j}
$
for all $1\leq i,j\leq\Fr$.
Denote by $V^{-}=\Span\{e_{-1},\dots,e_{-\Fr}\}$ the dual space of $V^+$.
Then $(V,q)$ has the following polar decomposition
$$
V=V^{+}\oplus V_{0}\oplus V^{-},
$$
where $V_{0}=(V^{+}\oplus V^{-})^{\perp}$ is an $F$-anisotropic kernel of $(V,q)$.
We choose an orthogonal basis $\{e'_{1},\dots,e'_{\Fd_0}\}$ of $V_{0}$ with
$
q(e'_{i},e'_{i})=d_{i},
$
where $d_{i}$ is nonzero for all $1\leq i\leq \Fd_0$. Set $G_{d_0}=\RU(V_{0},q)$ with $d_0=[\frac{\Fd_0}{2}]$,
which is anisotropic over $F$ and is regarded as an $F$-subgroup of $G_n$, canonically. In this case, for convenience,
we may say that $G_n$ is a unitary group of type $\RU_{\Fr+\Fd_0,\Fr}$, with $\Fn=\Fd_0+2\Fr$. We may denote by $\CU_{\Fr+\Fd_0,\Fr}$
the set of all unitary groups of such type that are considered in this paper.

We put the above bases together in the following order to form a basis of $(V,q)$:
\begin{equation}\label{bs}
e_{1},\dots,e_{\Fr},e'_{1},\dots,e'_{\Fd_0},e_{-\Fr},\dots,e_{-1}.
\end{equation}
According to the basis in \eqref{bs}, we fix the following totally isotropic flag in $(V,q)$:
$$
\Span\{e_{1}\}\subset\Span\{e_{1},e_{2}\}\subset
\cdots\subset
\Span\{e_{1},\dots,e_{\Fr}\},
$$
which defines a minimal parabolic $F$-subgroup $P_0$.
Moreover, $P_0$ contains a maximal $F$-split torus $S$, consisting of elements
$$
\diag\{t_{1},\dots,t_{\Fr},1,\dots,1,{t}^{-1}_{\Fr},\dots,t^{-1}_{1}\},
$$
with $t_i\in F^\times$ for $i=1,2,\cdots,\Fr$. The centralizer  $Z(S)$ of $S$ in $G_n$ is $\Res_{E/F}S\times G_{d_0}$, which is the Levi subgroup
of $P_0$ and gives the Levi decomposition:
$
P_0=(\Res_{E/F}S\times G_{d_0})\ltimes N_0,
$
where $N_0$ is the unipotent radical of $P_0$.
Also, with respect to the basis in \eqref{bs}, the group $G_n$ is also defined by the Hermitian matrix $J_{\Fr}^\Fn$, which is defined inductively by
\begin{equation} \label{eq:J}
J_{\Fr}^\Fn=\begin{pmatrix}
&&1\\&J_{\Fr-1}^{\Fn-2}&\\1&&
\end{pmatrix}_{\Fn\times\Fn}
\text{ and }
J_{0}^{\Fd_0}=\diag\{d_{1},\dots,d_{\Fd_0}\}.
\end{equation}

Let $_{F}\!\Phi(G_n,S)$ be the root system of $G_n$ over $F$.
Let $_{F}\!\Phi^{+}(G_n,S)$ be the positive roots corresponding to the minimal parabolic $F$-subgroup $P_0$,
and $_{F}\!\Delta=\{\alpha_{1},\dots,\alpha_{\Fr}\}$ a set of simple roots in $_{F}\!\Phi^{+}(G_n,S)$.
Note that the root system $_{F}\!\Phi(G_n,S)$ is non-reduced of type $BC_{\Fr}$ if $2\Fr<\Fn$;
and is of type $C_{\Fr}$, otherwise.

For a subset $J\subset\{1,\dots,\Fr\}$, let $_{F}\!\Phi_{J}$ be the root subsystem  of $_{F}\!\Phi(G_n,S)$
generated by the simple roots $\{\alpha_{j}\colon j\in J\}$.
Let $P_{J}=M_{J}U_{J}$ be the standard parabolic $F$-subgroup of $G_n$,
whose Lie algebra consists of all roots spaces $\Fg_{\alpha}$ with $\alpha\in {_{F}\!\Phi^{+}(G_n,S)}\cup {_{F}\!\Phi_{J}}$.
For instance,  let $\hat{i}=\{1,\dots,\Fr\}\setminus\{i\}$,
and then $P_{\hat{i}}=M_{\hat{i}}N_{\hat{i}}$ the standard maximal parabolic $F$-subgroup of $G_n$,
which stabilizes the rational isotropic space $V^+_i$,
where $V^{\pm}_{i}:=\Span\{e_{\pm1},\dots,e_{\pm i}\}$.
Here $N_{\hat{i}}$ is the unipotent radical of $P_{\hat{i}}$ and
the Levi subgroup $M_{\hat{i}}$ is isomorphic to $G_{E/F}(i)\times G_{n-i}$. Following the notation of \cite{A13} and \cite{Mk15},
$G_{E/F}(i)$ denotes the Weil restriction of $E$-group $\GL_{i}$ restricted to $F$.
Write  $V_{(i)}=(V^{+}_{i}\oplus V^{-}_{i})^{\perp}$ and hence $V_{(\Fr)}=V_0$ is the $F$-anisotropic kernel of $(V,q)$.

\subsection{Discrete spectrum and Arthur packets}\label{ssec-dsap}
Let $\BA=\BA_F$ be the ring of adeles of the number field $F$. Denote by $\CA_\disc(G_n)$ the set of equivalence classes of irreducible
unitary representations $\pi$ of $G_n(\BA)$ occurring in the discrete spectrum $L^2_\disc(G_n)$ of $L^2(G_n(F)\bks G_n(\BA)^1)$, when $\pi$ is restricted to
$G_n(\BA)^1$. Also denote by $\CA_\cusp(G_n)$ for the subset of $\CA_\disc(G_n)$, whose elements occur in the cuspidal spectrum $L^2_\cusp(G_n)$.
The theory of endoscopic classification for classical groups $G_n$ is to parametrize the set $\CA_\disc(G_n)$ by means of the global Arthur parameters.
The global Arthur parameters can be realized as certain automorphic representations of general linear groups. We recall the theory of endoscopic classification
from the work of Arthur (\cite{A13}),
the work of Mok (\cite{Mk15}) and the work of Kaletha, Minguez, Shin, and White (\cite{KMSW}).

Note that the unitary group $G_n$ and its $F$-quasisplit pure inner form $G_n^*$ share the same
$L$-group ${^LG_n^*}={^LG_n}$.
Following \cite{A13}, \cite{Mk15} and \cite{KMSW}, with $N=\Fn$ in the case of unitary groups, we denote by $\wt{\CE}_\simp(N)$ the set of the equivalence classes of simple twisted
endoscopic data, which are represented by the triple $(G_n^*,s,\xi)$, where $G_n^*$ is an $F$-quasisplit classical group, $s$ is a semi-simple
element as described in \cite[Page 11]{A13} and \cite[Page 16]{Mk15}, and $\xi$ is the $L$-embedding
$
{^LG_n^*}\rightarrow {^LG_{E/F}(N)}.
$
Note that $\xi=\xi_{\chi_\kappa}$ depends on $\kappa=\pm1$.
As in \cite[Page 18]{Mk15}, for a simple twisted endoscopic datum $(G_n^*, \xi_{\chi_\kappa})$ of $G_{E/F}(N)$,
the sign $(-1)^{N-1}\cdot\kappa$ is called the {\sl parity} of the datum.
The set of global Arthur parameters for $G_n^*$ is denoted by $\wt{\Psi}_2(G_n^*,\xi)$, or simply by $\wt{\Psi}_2(G_n^*)$ if the
$L$-embedding $\xi$ is well understood in the discussion.

We simply recall from \cite[Section 2.2]{JZ-BF} the more explicit description of the global Arthur parameters for the situation in the current paper.
First, the set of the conjugate self-dual, elliptic, global Arthur parameters for $G_{E/F}(N)$, which is denoted by $\wt{\Psi}_\el(N)$,
consists of global parameters of the following type:
\begin{equation}\label{ellgap}
\psi^N=\psi_1^{N_1}\boxplus\cdots\boxplus\psi_r^{N_r}
\end{equation}
with $N=\sum_{i=1}^rN_i$. The formal summands $\psi_i^{N_i}$ are {\sl simple} parameters of the form
$
\psi_i^{N_i}=\mu_i\boxtimes\nu_i,
$
with $N_i=a_ib_i$, where $\mu_i=\tau_i\in\CA_\cusp(G_{E/F}(a_i))$ and $\nu_i$ is a $b_i$-dimensional representation of $\SL_2(\BC)$.
Following the notation used in our previous paper \cite{J14}, we also set
$
\psi_i^{N_i}=(\tau_i,b_i),
$
for $i=1,2,\cdots,r$. The global parameter $\psi^N$ is called {\sl elliptic} if the decomposition of $\psi^N$ into the simple parameters is
of multiplicity free, i.e. $\psi_i^{N_i}$ and $\psi_j^{N_j}$ are not equivalent if $i\neq j$
in the sense that either $\tau_i$ is not equivalent to $\tau_j$ or $b_i\neq b_j$.
The global parameter $\psi^N$ is called {\sl conjugate self-dual} if each simple parameter $\psi_i^{N_i}$
occurring in the decomposition of $\psi^N$ is conjugate self-dual in the sense that $\tau_i$ is conjugate self-dual. An irreducible
cuspidal automorphic representation $\tau$ of $G_{E/F}(a)$ is called {\sl conjugate self-dual} if $\tau\cong\tau^*$, where
$\tau^*=\iota(\tau)^\vee$ the contragredient of $\iota(\tau)$ with $\iota$ being the non-trivial element in $\Gamma_{E/F}$.
A global parameter $\psi^N$ in $\wt{\Psi}_\el(N)$ is called {\sl generic} if $b_i=1$ for $i=1,2,\cdots,r$.
The set of generic, conjugate self-dual, elliptic, global Arthur parameters for $G_{E/F}(N)$ is denoted by $\wt{\Phi}_\el(N)$. Hence elements $\phi^N$ in
$\wt{\Phi}_\el(N)$ are of the form:
\begin{equation}\label{ellggap}
\phi^N=(\tau_1,1)\boxplus\cdots\boxplus(\tau_r,1).
\end{equation}
When $r=1$, the global parameters $\psi^N$ are called {\sl simple}. The corresponding sets are denoted by $\wt{\Psi}_\simp(N)$ and
$\wt{\Phi}_\simp(N)$, respectively. It is clear that the set $\wt{\Phi}_\simp(N)$ is in one-to-one correspondence with the set
of equivalence classes of the conjugate self-dual, irreducible cuspidal automorphic representations of $G_{E/F}(N)(\BA_F)$.
By \cite[Theorem 1.4.1]{A13} and \cite[Theorem 2.4.2]{Mk15}, for a simple parameter $\phi=\phi^a=(\tau,1)$ in $\wt{\Phi}_\simp(a)$
there exists a unique endoscopic datum $(G_\phi,s_\phi,\xi_\phi)$, such that the parameter $\phi^a$ descends to a global parameter
for $(G_\phi,\xi_\phi)$ in sense that there exists an irreducible automorphic representation $\pi$ in $\CA_2(G_\phi)$, whose
Satake parameters are determined by the Satake parameters of $\phi^a$.

When $G_\phi=\RU_{E/F}(a)$ is the $F$-quasisplit unitary group associated to the $a$-dimensional Hermitian vector space,
the $L$-embedding carries a sign $\kappa_a$, which
determines the nature of the {\sl base change} from the unitary group $\RU_{E/F}(a)$ to $G_{E/F}(a)(\BA_E)$.
By \cite[Theorem 2.5.4]{Mk15}, the (partial) $L$-function
$
L(s,(\tau,1),\Asai^{\eta_{(\tau,1)}})
$
has a (simple) pole at $s=1$ with the sign $\eta_{(\tau,1)}=\kappa_a\cdot(-1)^{a-1}$
(see also \cite[Theorem 8.1]{GGP12} and \cite[Lemma 2.2.1]{Mk15}).
Then the irreducible cuspidal automorphic representation $\tau$ or equivalently the simple generic parameter $(\tau,1)$
is called {\it conjugate orthogonal} if $\eta_{(\tau,1)}=1$ and {\it conjugate symplectic} if $\eta_{(\tau,1)}=-1$, following the terminology of
\cite[Section 3]{GGP12} and \cite[Section 2]{Mk15}.
Here
$L^S(s,(\tau,1),\Asai^+)$ is the (partial) Asai $L$-function of $\tau$ and $L^S(s,(\tau,1),\Asai^-)$ is the (partial) Asai $L$-function of
$\tau\otimes\omega_{E/F}$, where $\omega_{E/F}$ is the quadratic character associated to $E/F$ by the global class field theory.
The sign of a simple global Arthur parameter $\psi=\psi^{ab}=(\tau,b)\in\wt\Psi_2(ab)$ is calculated in \cite[Section 2.2]{JZ-BF},
following \cite[Section 2.4]{Mk15}.

More generally, from
\cite[Section 1.4]{A13} and \cite[Section2.4]{Mk15}, for any parameter $\psi^N$ in $\wt{\Psi}_\el(N)$, there is a twisted elliptic
endoscopic datum $(G,s,\xi)\in\wt{\CE}_\el(N)$ such that the set of the global parameters $\wt{\Psi}_2(G,\xi)$ can be identified as
a subset of $\wt{\Psi}_\el(N)$. We refer to \cite[Section 1.4]{A13}, \cite[Section 2.4]{Mk15}, and \cite[Section 1.3]{KMSW} for
more constructive description of the parameters in $\wt{\Psi}_2(G,\xi)$.
The elements of $\wt{\Psi}_2(G_n^*,\xi)$, with $N=\Fn$ and $n=[\frac{\Fn}{2}]$, are of the form
\begin{equation}\label{aps}
\psi=(\tau_1,b_1)\boxplus\cdots\boxplus(\tau_r,b_r).
\end{equation}
Here $N=N_1+\cdots+N_r$ and $N_i=a_i\cdot b_i$, and $\tau_i\in\CA_\cusp(G_{E/F}(a_i))$ and $b_i$ represents the $b_i$-dimensional
representation of $\SL_2(\BC)$. Note that each simple parameter $\psi_i=(\tau_i,b_i)$ belongs to $\wt{\Psi}_2(G_{n_i}^*,\xi_i)$ with
$n_i=[\frac{N_i}{2}]$, for $i=1,2,\cdots,r$; and for $i\neq j$, $\psi_i$ is not equivalent to $\psi_j$. The parity for $\tau_i$ and $b_i$
is discussed as above. The subset of generic elliptic global Arthur parameters in $\wt{\Psi}_2(G_n^*,\xi)$ is denoted by
$\wt{\Phi}_2(G_n^*,\xi)$, whose elements are in the form of \eqref{ellgap}.

\begin{thm}[\cite{A13}, \cite{Mk15}, \cite{KMSW}]\label{ds}
For any $\pi\in\CA_\disc(G_n)$, there is a $G_n$-relevant global Arthur parameter $\psi\in\wt{\Psi}_2(G_n^*,\xi)$,
such that $\pi$ belongs to the global Arthur packet, $\wt{\Pi}_{\psi}(G_n)$, attached to the global Arthur parameters $\psi$.
\end{thm}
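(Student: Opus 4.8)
\emph{Plan of proof.} The statement is the endoscopic classification of the discrete spectrum of unitary groups, so the plan is to follow Arthur's strategy for quasi-split classical groups \cite{A13}, its adaptation to unitary groups by Mok \cite{Mk15}, and the extension to pure inner forms by Kaletha--Minguez--Shin--White \cite{KMSW}. The point of departure is the Moeglin--Waldspurger classification of the discrete spectrum of $\GL_N$: over the quadratic extension $E$, every discrete automorphic representation of $G_{E/F}(N)(\BA_F)=\GL_N(\BA_E)$ is a Speh representation $\mathrm{Speh}(\tau,b)$ with $\tau\in\CA_\cusp(G_{E/F}(a))$ and $N=ab$, and the isobaric sums of pairwise-inequivalent conjugate-self-dual such summands are exactly the parameters in $\wt\Psi_\el(N)$. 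One then realizes the quasi-split unitary group $G_n^*=\RU_{E/F}(\Fn)$, in each of its two parities, as a simple twisted endoscopic datum $(G_n^*,s,\xi)\in\wt\CE_\simp(N)$ of $G_{E/F}(N)$ equipped with the twisting automorphism $\theta$, and compares the $\theta$-twisted trace formula of $G_{E/F}(N)$ with the stabilized ordinary trace formulas of the unitary groups occurring in its elliptic endoscopic data.

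The heart of the argument is a simultaneous induction on $N$ that establishes, at each stage: (i) for the localizations of a parameter $\psi\in\wt\Psi_2(G_n^*,\xi)$, the existence of local Arthur packets together with a pairing of their members against characters of the local component group $\CS_{\psi_v}$; (ii) the local intertwining relation, which fixes the internal normalization of these packets in terms of normalized intertwining operators, Shahidi's local coefficients, and a fixed Whittaker datum; and (iii) globally, Arthur's multiplicity formula, obtained by feeding (i) and (ii) together with the stable multiplicity formula into the comparison of trace formulas so as to isolate, parameter by parameter, the $\psi$-isotypic piece of $L^2_\disc(G_n^*)$. The spectral side of the twisted trace formula for $G_{E/F}(N)$, combined with Moeglin--Waldspurger, forces any $\pi\in\CA_\disc(G_n^*)$ to contribute to exactly one $\psi\in\wt\Psi_\el(N)$, and the stabilization then assigns the matching automorphic representations to those unitary groups whose twisted endoscopic datum carries $\psi$; this is precisely the assertion that $\pi\in\wt\Pi_\psi(G_n^*)$ for a (necessarily relevant) $\psi\in\wt\Psi_2(G_n^*,\xi)$.

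For a general pure inner form $G_n$ of $G_n^*$ one appeals to \cite{KMSW}: one compares the trace formula of $G_n$ with that of $G_n^*$ via the inner twist, transporting the local classification (now with refined local packets whose members are parametrized by characters of $\CS_{\psi_v}$ in a way depending on the inner twist and a rigidification in the sense of Kottwitz, so that the Kottwitz sign enters the comparison) and the global multiplicity formula. One shows the discrete spectrum of $G_n$ is again organized by the parameters $\psi\in\wt\Psi_2(G_n^*,\xi)$, now subject to the relevance condition, i.e. nonemptiness of the local packet at every place, which is exactly the ``$G_n$-relevant'' hypothesis in the statement; combined with the previous paragraph this gives the theorem for all $G_n\in\CU_{\Fr+\Fd_0,\Fr}$.

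The main obstacle, i.e. the genuine heart of \cite{A13}, \cite{Mk15}, \cite{KMSW} rather than a routine bookkeeping step, is the construction and precise normalization of the local Arthur packets and the proof of the local intertwining relation. It is this that converts the bare identity of twisted and untwisted trace-formula distributions into a clean spectral decomposition, and its proof requires a delicate interlacing of the induction hypotheses with analytic input on intertwining operators and their poles, deep unitarity and twisted character-identity results, and, at the archimedean places, Shelstad's endoscopy for real groups together with the real local classification.
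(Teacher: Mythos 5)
The paper does not prove this theorem; it is stated purely as a recollection, with the proof attributed entirely to \cite{A13}, \cite{Mk15}, and \cite{KMSW}. There is therefore no internal argument in the paper to compare your proposal against. That said, your outline is an accurate high-level summary of how the classification is actually established in those references: the Moeglin--Waldspurger description of $L^2_\disc(\GL_N(\BA_E))$ identifies the parameter set $\wt\Psi_\el(N)$, the twisted trace formula of $G_{E/F}(N)$ is compared with the stabilized trace formulas of its elliptic twisted endoscopic data, the simultaneous induction on $N$ produces local packets, the local intertwining relation, the stable multiplicity formula, and finally Arthur's global multiplicity formula, and \cite{KMSW} transports this to pure inner forms via rigid inner twists, with relevance of $\psi$ encoded as nonemptiness of every local packet. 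Your identification of the local intertwining relation and the precise normalization of local packets as the genuine technical core is also in line with how those works present the difficulty. So the proposal is a faithful sketch of the cited proof, but note that nothing in the present paper is being reproved --- the theorem is imported as a black box.
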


In the rest of this paper, we mainly consider the generic global Arthur parameters of $G_n^*$.
In order to simplify the notation, we may identify the global Arthur parameter $\phi$ in $\wt{\Phi}_2(G_n^*,\xi)$ with the global Arthur
parameter $\phi^N$ with $N=\Fn$ in $\wt{\Phi}_\el(N)$ when the $L$-embedding $\xi$ is assumed.

Also we recall from \cite{JZ-BF} and also from \cite{GGP12} the notation of the local and global Vogan packets. For the given $F$-quasisplit
unitary group $G_n^*$ and its $F$-pure inner forms $G_n$, the global Arthur packet $\wt{\Pi}_\phi(G_n)$ is empty if the
generic global Arthur parameter $\phi$ in $\wt{\Phi}_2(G_n^*,\xi)$ is not $G_n$-relevant. Then the global Vogan packet associated to $\phi$ is defined to
be
\begin{equation}\label{gvp}
\wt{\Pi}_\phi[G_n^*]:=\cup_{G_n}\wt{\Pi}_\phi(G_n).
\end{equation}
The local Vogan packets are defined in the same way, which are denoted by $\wt{\Pi}_{\phi_v}[G_n^*]$, for any generic local Arthur parameters of $G_n^*$.

\subsection{Fourier coefficients associated to partitions}\label{ssec-fcpt}
We simply recall from \cite[Section 2.3]{JZ-BF} (or \cite{J14}) about the Fourier coefficients of automorphic representations.

For an $F$-quasisplit unitary group $G_n^*$ as defined in Section \ref{ssec-ug},
we denote by $\CP(\Fn)$ the set of
all partitions of $N=\Fn$. Note that $\CP(\Fn)$ parametrizes the set of all $F$-stable nilpotent adjoint orbits in the
Lie algebra $\Fg_n^*(F)$ of $G_n^*(F)$, and hence each partition $\udl{p}\in\CP(\Fn)$ defines an $F$-stable nilpotent adjoint orbit $\CO_{\udl{p}}^\st$.
For an $F$-rational orbit $\CO_{\udl{p}}\subset\CO_{\udl{p}}^\st$, the datum $(\udl{p},\CO_{\udl{p}})$ determines
a datum $(V_{\udl{p}},\psi_{\CO_{\udl{p}}})$
for defining Fourier coefficients as explained in \cite{J14} and \cite[Section 2.3]{JZ-BF}. Here $V_{\udl{p}}$ is a unipotent subgroup of $G_n^*$ and
$\psi_{\CO_{\udl{p}}}$ is a non-degenerate character of $V_{\udl{p}}(\BA)$, which is trivial on $V_{\udl{p}}(F)$ and determined by a given non-trivial character $\psi_F$ of $F\bks\BA$.

For an automorphic form $\varphi$ on $G_n^*(\BA)$, the $\psi_{\CO_{\udl{p}}}$-Fourier coefficient of $\varphi$ is defined by the
following integral:
\begin{equation}\label{fcg}
\CF^{\psi_{\CO_{\udl{p}}}}(\varphi)(g)
:=
\int_{V_{\udl{p}}(F)\bks V_{\udl{p}}(\BA)}\varphi(vg)\psi_{\CO_{\udl{p}}}^{-1}(v)\ud v.
\end{equation}
Let $N_{G_n^*}(V_{\udl{p}})^\sm$ be the connected component of the semi-simple part of the normalizer of the subgroup $V_{\udl{p}}$ in $G_n^*$. Define
\begin{equation}\label{stab}
H^{\CO_{\udl{p}}}:=\Cent_{N_{G_n^*}(V_{\udl{p}})^\sm}(\psi_{\CO_{\udl{p}}})^\circ,
\end{equation}
the identity connected component of the centralizer.
As discussed in \cite[Section 2.3]{JZ-BF}, the $\psi_{\CO_{\udl{p}}}$-Fourier coefficient $\CF^{\psi_{\CO_{\udl{p}}}}(\varphi)(g)$ of $\varphi$ is left
$H^{\CO_{\udl{p}}}(F)$-invariant, smooth when restricted on $H^{\CO_{\udl{p}}}(\BA)$, and of moderate growth on a Siegel set of
$H^{\CO_{\udl{p}}}(\BA)$. For any $\pi\in\CA_\disc(G_n^*)$, we define $\CF^{{\CO_{\udl{p}}}}(\pi)$ to be the space spanned by all
$\CF^{\psi_{\CO_{\udl{p}}}}(\varphi_\pi)$ with $\varphi_\pi$ running in the space of $\pi$, and call $\CF^{{\CO_{\udl{p}}}}(\pi)$
the {\sl $\psi_{\CO_{\udl{p}}}$-Fourier module} of $\pi$.

For an $F$-pure inner form $G_n$ of $G_n^*$, a partition $\udl{p}$ in the set $\CP(\Fn)$ is called {\sl $G_n$-relevant} if
the unipotent subgroup $V_{\udl{p}}$ of $G_n$ as algebraic groups over the algebraic closure $\overline{F}$ is actually defined over $F$.
We denote by $\CP(\Fn)_{G_n}$ the subset of the set $\CP(\Fn)$ consisting of all $G_n$-relevant partitions of $\Fn$. It is easy to see that the above discussion about Fourier coefficients can be applied to all $\pi\in\CA_\disc(G_n)$ and
all $\udl{p}\in\CP(\Fn)_{G_n}$, without change.

It is clear that the regular nilpotent orbit in this case corresponds to the partition $\udl{p}_\reg=[\Fn]$ with $N=\Fn$, and the subregular nilpotent orbit
corresponds to the partition $\udl{p}_\subr=[(\Fn-1)1]$.

\section{A Conjecture on Global Arthur Packets}\label{sec-CGAP}

For $F$-quasisplit unitary groups $G_n^*$, it is known that any global Arthur packet $\wt{\Pi}_\phi(G_n^*)$ with a generic global Arthur parameter $\phi$
contains a generic member, i.e. contains a member that has a non-zero Fourier coefficient associated to the regular partition $\udl{p}_\reg=[\Fn]$.
This was proved by using the automorphic descent of Ginzburg-Rallis-Soudry, as explicitly given in \cite[Section 3.1]{JL-Cogdell}.

It is natural to ask what happens if the group is not $F$-quasisplit. We present here our conjecture for unitary groups. The conjecture
for other classical groups is similar and will be discussed in our future work.

Let $G_n=\RU(V,q)$ be the unitary group as defined in Section \ref{ssec-ug}, with $F$-rank $\Fr$. Recall that when $2\Fr<\Fn$, the root system
$_{F}\!\Phi(G_n,S)$ is non-reduced of type $BC_{\Fr}$; otherwise, it is of type $C_{\Fr}$. In this section, we assume that $2\Fr<\Fn$. This excludes
the case when $G_n=\RU_{n,n}$, which is $F$-quasisplit and whose root system is of type $C_\Fr$. In this case, the conjecture discussed below is already
known (\cite[Section 3.1]{JL-Cogdell}).

Assume that $2\Fr<\Fn$ and define the following partition of $\Fn$, depending on the $F$-rank $\Fr$ of $G_n$,
\begin{equation}\label{pFr}
\udl{p}_\Fr=[(2\Fr+1)1^{\Fn-2\Fr-1}].
\end{equation}
It is clear that the partition $\udl{p}_\Fr$ is $G_n$-relevant. Recall from Section 2.3 of \cite{JZ-BF}
that the partition $\udl{p}_\Fr$ defines the Fourier coefficients
$\CF^{\CO_{\udl{p}_\Fr}}$ of
Bessel type for automorphic forms on $G_n(\BA)$. In this case, following \eqref{stab}, we define
\begin{equation}\label{stab2}
H^{\CO_{\Fr}}:=\Cent_{N_{G_n}(V_{\udl{p}_\Fr})^\sm}(\psi_{\CO_{\udl{p}_\Fr}})^\circ.
\end{equation}
By Proposition 2.5 of \cite{JZ-BF}, $H^{\CO_{\Fr}}$ is a unitary group defined over $F$ and is a subgroup of the  $F$-anisotropic group $\RU(V_0,q)$.
For any $\pi\in\CA_\disc(G_n)$, as in \cite[Section 2.3]{JZ-BF},
we call the $\psi_{\CO_{\udl{p}}}$-Fourier module of $\pi$,
$$
\CF^{{\CO_{\udl{p}_\Fr}}}(\pi)=\CF^{{\CO_{\Fr}}}(\pi),
$$
the $\Fr$-th {\sl Bessel module} of $\pi$. Since $\Fr$ is the $F$-rank of $G_n$, by \cite[Proposition 2.2]{JZ-BF},
the $\Fr$-th Bessel module, if it is nonzero,
can be regarded as a submodule of the cuspidal spectrum $L^2_\cusp(H^{\CO_{\Fr}})$ in the space of all square integrable automorphic functions on
$H^{\CO_{\Fr}}(\BA)$. Note that the cuspidality here is not essential since the group $H^{\CO_{\Fr}}$ is $F$-anisotropic.

\begin{conj}\label{conj-main}
Let $G_n$ be the unitary group $F$-rank $\Fr$ with $2\Fr<\Fn$ and $G_n^*$ be the $F$-quasisplit pure inner form of $G_n$.
For any $\phi$ in the set $\wt{\Phi}_2(G_n^*)_{G_n}$ of
$G_n$-relevant generic global Arthur parameters, the global Arthur packet $\wt{\Pi}_\phi(G_n)$ contains a member $\pi_0$ that
has a nonzero $\Fr$-th Bessel module $\CF^{{\CO_{\Fr}}}(\pi_0)$ associated to the partition $\udl{p}_\Fr$ as defined in \eqref{pFr}.
\end{conj}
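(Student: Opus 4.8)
Proof proposal for Conjecture \ref{conj-main} (as a plan towards a theorem).

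The plan is to reduce the conjecture to the known global Shahidi property on the quasi-split form $G_n^*$ via the twisted automorphic descent. First I would recall that, since $\phi$ is $G_n$-relevant, by the endoscopic classification (Theorem \ref{ds}) the global Arthur packet $\wt\Pi_\phi(G_n)$ is nonempty, and by \cite[Section 3.1]{JL-Cogdell} the packet $\wt\Pi_\phi(G_n^*)$ on the quasi-split form contains a generic member $\sigma$ with a nonzero Whittaker-Fourier coefficient attached to $\udl{p}_\reg=[\Fn]$. The idea is to construct, by the twisted automorphic descent of \cite{JZ-BF} and \cite{JZ-Howe}, a member $\pi_0$ of $\wt\Pi_\phi(G_n)$ out of $\sigma$ (or out of the parameter $\phi$ directly), and to show that the descent construction produces precisely a nonzero $\Fr$-th Bessel module $\CF^{\CO_{\Fr}}(\pi_0)$ relative to the partition $\udl{p}_\Fr=[(2\Fr+1)1^{\Fn-2\Fr-1}]$. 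Because $\Fr$ is the $F$-rank of $G_n$ and $H^{\CO_\Fr}$ is the $F$-anisotropic group $\RU(V_0,q)$ by \cite[Proposition 2.5]{JZ-BF}, the target Bessel model is the largest one available, so this is exactly the analogue of the regular Whittaker model on $G_n^*$.

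The key steps, in order, would be: (1) fix $\phi\in\wt\Phi_2(G_n^*)_{G_n}$ and realize it on $G_{E/F}(\Fn)$ as an isobaric sum $(\tau_1,1)\boxplus\cdots\boxplus(\tau_r,1)$ with the correct conjugate self-duality/parity constraints; (2) form the residual (or cuspidal) automorphic representation $\mathcal E_\phi$ of an auxiliary larger general linear or unitary group attached to $\phi$, and apply the Bessel-Fourier descent operator of \cite{JZ-BF} at the unipotent orbit matched to the Witt tower of $(V,q)$; (3) show the descent image is a nonzero automorphic representation of $G_n(\BA)$ whose components at every place lie in the local Vogan packet $\wt\Pi_{\phi_v}[G_n^*]$, hence lands inside $\wt\Pi_\phi(G_n)$ (using the local-global compatibility of descent with the endoscopic classification, together with \cite{KMSW}); (4) by the standard unfolding identities for descent, relate the nonvanishing of the $\Fr$-th Bessel coefficient of the descended $\pi_0$ to the nonvanishing of a Whittaker-type period of $\mathcal E_\phi$, which in turn follows from the genericity of $\phi$ and the pole of the relevant Asai/Rankin-Selberg $L$-function. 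Finally, (5) identify $H^{\CO_\Fr}\cong\RU(V_0,q)$ and observe that the resulting Bessel module sits in $L^2_\cusp(H^{\CO_\Fr})$ by \cite[Proposition 2.2]{JZ-BF}, completing the conclusion.

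I expect the main obstacle to be step (3): controlling the precise local components of the descent and matching them with the local Arthur/Vogan packets on the \emph{non}-quasi-split inner form $G_n$. On the quasi-split form this is exactly the content of \cite[Section 3.1]{JL-Cogdell}, but for a general pure inner form one needs the full strength of \cite{KMSW} together with a compatibility statement between the descent construction and the endoscopic parametrization at the ramified and archimedean places; making this rigorous is precisely where the $F$-rational structure of the Fourier coefficients attached to $\udl{p}_\Fr$ becomes delicate (as the authors themselves note for the cases $\RU_{1,1},\RU_{2,1},\RU_{2,2}$ in the introduction). A secondary difficulty is step (4), namely the unfolding: the Bessel coefficient $\CF^{\CO_\Fr}$ for $2\Fr<\Fn$ involves a genuinely non-generic unipotent subgroup, so the unfolding of the descent integral against $\psi_{\CO_{\udl{p}_\Fr}}$ produces extra unipotent integrations whose vanishing or non-vanishing must be tracked carefully, presumably by an induction on $\Fr$ peeling off one isotropic plane at a time, reducing from $G_n$ of rank $\Fr$ to the analogous statement on the anisotropic kernel $\RU(V_0,q)$, where the statement is trivial.
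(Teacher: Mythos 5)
First, a basic point: the statement you have attempted to prove is stated in the paper as Conjecture~\ref{conj-main}, and the paper does \emph{not} prove it. It is used as a hypothesis in Theorem~\ref{th-nva}, and its special subregular case (Conjecture~\ref{conj-subr}) is the hypothesis of Theorem~\ref{th-main}. The only unconditional result about Fourier coefficients in the paper, Proposition~\ref{prop-u4subr}, concerns the quasi-split group $\RU_{2,2}$, for which $2\Fr=\Fn$, so it does not fall under the hypotheses of Conjecture~\ref{conj-main} at all; and its proof is a direct chain of Fourier-coefficient arguments ($[4]\Rightarrow[2^2]\Rightarrow[31]$ via Lemmas~\ref{lm:U-22}--\ref{lm:comp-31}, with \cite{Li92} as the starting input), not a descent argument.

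Your descent-based program is the natural strategy for Conjecture~\ref{conj-main} and is consistent with the philosophy of \cite{JZ-BF} and \cite{JZ-Howe}, but it is a program, not a proof, and the two obstacles you name at the end are not secondary — they are precisely why the statement remains a conjecture. Step (3) requires both the nonvanishing of the twisted descent onto a non-quasi-split pure inner form $G_n$ and the compatibility of that descent with the local Vogan parametrization of \cite{KMSW} at every place; neither is established in general, and when the anisotropic kernel $\RU(V_0,q)$ is nontrivial, the $F$-rational orbit structure of the Fourier coefficients attached to $\udl{p}_\Fr$ depends on the Hermitian form in ways the authors explicitly flag as the obstruction to going beyond low rank. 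Step (4) is not a single \cite[Corollary~7.1]{GRS11}-type exchange of root subgroups: for $2\Fr<\Fn$ the unipotent subgroup attached to $[(2\Fr+1)1^{\Fn-2\Fr-1}]$ is far from maximal, the stabilizer $H^{\CO_\Fr}$ is a nontrivial anisotropic unitary group rather than a torus, and no clean inductive reduction to the anisotropic kernel currently exists. So your proposal identifies the right approach and the right difficulties, but it leaves those difficulties open — which is exactly where the paper leaves them.
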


In the following, we are going to discuss the relation of Conjecture \ref{conj-main} with the non-vanishing of central value of certain $L$-functions.

Assume that $\pi\in\CA_\cusp(G_n)$ has a $G_n$-relevant, generic global Arthur parameter $\phi\in\wt{\Phi}_2(G_n^*)_{G_n}$. By Conjecture \ref{conj-main},
there exists a cuspidal member $\pi_0$ that shares with $\pi$ the global Arthur packet $\wt{\Pi}_\phi(G_n)$ and has the nonzero $\Fr$-th Bessel module
$\CF^{{\CO_{\Fr}}}(\pi_0)$. Following the discussion before Conjecture \ref{conj-main} and Proposition 2.2 of \cite{JZ-BF},
the nonzero $\Fr$-th Bessel module
$\CF^{{\CO_{\Fr}}}(\pi_0)$ can be regarded as a submodule of the cuspidal spectrum $L^2_\cusp(H^{\CO_\Fr})$, and
can be written, as a Hilbert direct sum
$$
\CF^{{\CO_{\Fr}}}(\pi_0)=\sigma_1\oplus\sigma_2\oplus\cdots\oplus\sigma_k\oplus\cdots
$$
of irreducible cuspidal automorphic representations of $H^{\CO_\Fr}(\BA)$. From the definition of the partition $\udl{p}_\Fr$ in \eqref{pFr},
by the {\sl Generic Summand Conjecture}, which is Conjecture 2.3 of \cite{JZ-BF},
there exists an irreducible cuspidal automorphic representation $\sigma$ of $H^{\CO_{\Fr}}(\BA)$ with
a generic global Arthur parameter $\phi_\sigma$, such that the $L^2$-inner product
\begin{equation}\label{conj23}
\left<\CF^{\psi_{\CO_{\Fr}}}(\varphi_{\pi_0}),\varphi_\sigma\right>_{H^{\CO_{\Fr}}}
\end{equation}
is nonzero for some $\varphi_{\pi_0}\in\pi_0$ and $\varphi_\sigma\in\sigma$. Then, by Theorem 5.5 of \cite{JZ-BF}, which is one direction of the
global Gan-Gross-Prasad conjecture (\cite{GGP12}), we obtain that the central value $L(\frac{1}{2},\pi_0\times\sigma)$ is nonzero. Here the
tensor product $L$-function is defined in terms of the global Arthur parameters $\phi$ and $\phi_\sigma$ by
$$
L(s,\phi\times\phi_\sigma)=L(s,\pi_0\times\sigma),
$$
following \cite{A13}, \cite{Mk15}, and \cite{KMSW}. Since $\pi$ and $\pi_0$
share the same generic global Arthur packet $\wt{\Pi}_\phi(G_n)$, we obtain that
$$
L(\frac{1}{2},\pi\times\sigma)=L(\frac{1}{2},\pi_0\times\sigma)\neq 0.
$$

\begin{thm}\label{th-nva}
Assume that Conjecture 2.3 of \cite{JZ-BF} holds. Then Conjecture \ref{conj-main} implies that
for any $\pi\in\CA_\cusp(G_n)$
with a $G_n$-relevant, generic global Arthur parameter $\phi\in\wt{\Phi}_2(G_n^*)_{G_n}$, there exists a $\sigma\in\CA_\cusp(H^{\CO_{\Fr}})$, which depends only on $\phi$ and has
a generic global Arthur parameter $\phi_\sigma$, such that the central value $L(\frac{1}{2},\pi\times\sigma)$ is nonzero.
\end{thm}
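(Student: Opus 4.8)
The plan is to assemble the three inputs named in the statement --- Conjecture \ref{conj-main}, the Generic Summand Conjecture (Conjecture 2.3 of \cite{JZ-BF}), and the one direction of the global Gan--Gross--Prasad conjecture for unitary groups (Theorem 5.5 of \cite{JZ-BF}) --- into a single chain. First I would fix $\pi\in\CA_\cusp(G_n)$ with the given $G_n$-relevant generic global Arthur parameter $\phi$, and invoke Conjecture \ref{conj-main} to produce a member $\pi_0$ of the global Arthur packet $\wt{\Pi}_\phi(G_n)$ whose $\Fr$-th Bessel module $\CF^{\CO_{\Fr}}(\pi_0)$, attached to the partition $\udl{p}_\Fr$ of \eqref{pFr}, is nonzero. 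Since $\Fr$ is the $F$-rank of $G_n$, Proposition 2.2 of \cite{JZ-BF} allows me to regard $\CF^{\CO_{\Fr}}(\pi_0)$ as a nonzero submodule of the cuspidal $L^2$-spectrum of the $F$-anisotropic unitary group $H^{\CO_{\Fr}}$ of Proposition 2.5 of \cite{JZ-BF}, and hence to write it as a Hilbert direct sum of irreducible cuspidal automorphic representations of $H^{\CO_{\Fr}}(\BA)$.

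Next I would apply the Generic Summand Conjecture to this decomposition: because $\udl{p}_\Fr$ is a Bessel-type partition, Conjecture 2.3 of \cite{JZ-BF} furnishes a summand $\sigma\in\CA_\cusp(H^{\CO_{\Fr}})$ carrying a generic global Arthur parameter $\phi_\sigma$ for which the Bessel period pairing \eqref{conj23} is nonzero on suitable vectors $\varphi_{\pi_0}\in\pi_0$ and $\varphi_\sigma\in\sigma$. The point to record is that $\pi_0$ was chosen using only the packet $\wt{\Pi}_\phi(G_n)$ and $\sigma$ only from the Bessel module of $\pi_0$, so $\sigma$ and $\phi_\sigma$ depend only on $\phi$, as the statement requires. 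Since $(G_n,H^{\CO_{\Fr}})$ is a Bessel pair of unitary groups, the nonvanishing of \eqref{conj23} feeds directly into Theorem 5.5 of \cite{JZ-BF} and gives $L(\frac{1}{2},\pi_0\times\sigma)\neq 0$, where the tensor product $L$-function is the one attached to the parameters $\phi$ and $\phi_\sigma$ following \cite{A13}, \cite{Mk15}, \cite{KMSW}. Finally, because that $L$-function depends only on the Arthur parameters and $\pi,\pi_0$ lie in the same packet $\wt{\Pi}_\phi(G_n)$, one has $L(s,\pi\times\sigma)=L(s,\phi\times\phi_\sigma)=L(s,\pi_0\times\sigma)$, whence $L(\frac{1}{2},\pi\times\sigma)\neq 0$.

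The hard part is not any individual link in this chain but the two conjectural inputs themselves. Conjecture \ref{conj-main} is precisely the hypothesis one would like to remove, and is only established in special cases later in the paper; and the applicability of Conjecture 2.3 of \cite{JZ-BF} at this level of generality is itself delicate, since it asks that the Bessel module $\CF^{\CO_{\Fr}}(\pi_0)$ can be arranged to contain a summand with a generic global Arthur parameter. In the subregular situation of Section \ref{sec-unu1} this unconditional input is supplied by Proposition 7.4 of \cite{JZ-BF}, but in the general setting it remains hypothetical, which is why the theorem is stated conditionally on it.
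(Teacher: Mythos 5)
Your proposal reproduces the paper's argument essentially verbatim: invoke Conjecture~\ref{conj-main} to obtain $\pi_0$ with nonzero $\Fr$-th Bessel module, view that module inside $L^2_\cusp(H^{\CO_\Fr})$, extract a generic summand $\sigma$ via Conjecture~2.3 of \cite{JZ-BF}, apply Theorem~5.5 of \cite{JZ-BF} to get $L(\tfrac12,\pi_0\times\sigma)\neq 0$, and transfer to $\pi$ since the $L$-function is determined by the Arthur parameters. This is exactly the chain of reasoning the paper gives; your additional remark that $\sigma$ depends only on $\phi$ makes explicit a point the paper leaves implicit, but the route is the same.
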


If one writes out explicitly the generic global Arthur parameters:
$$
\phi=(\tau_1,1)\boxplus\cdots\boxplus(\tau_r,1)
$$
and
$$
\phi_\sigma=(\tau_1',1)\boxplus\cdots\boxplus(\tau_{r'}',1),
$$
then we have the following simultaneous non-vanishing result.

\begin{cor}
Assume that Conjecture 2.3 of \cite{JZ-BF} holds. Then Conjecture \ref{conj-main} implies that
with the notation above, the following simultaneous non-vanishing property holds
$$
L(\frac{1}{2},\phi\times\phi_\sigma)=
\prod_{i=1}^r\prod_{j=1}^{r'}L(\frac{1}{2},\tau_i\times\tau_j')\neq 0.
$$
\end{cor}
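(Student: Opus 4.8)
The plan is to deduce this directly from Theorem~\ref{th-nva} together with the multiplicativity of Rankin--Selberg $L$-functions along isobaric sums, so the argument is short. First I would recall that, following the conventions of \cite{A13}, \cite{Mk15} and \cite{KMSW} used throughout Section~\ref{sec-CGAP}, the tensor product $L$-function $L(s,\phi\times\phi_\sigma)$ attached to the pair of generic global Arthur parameters $\phi=(\tau_1,1)\boxplus\cdots\boxplus(\tau_r,1)$ and $\phi_\sigma=(\tau_1',1)\boxplus\cdots\boxplus(\tau_{r'}',1)$ is, by definition, the Rankin--Selberg $L$-function of the corresponding isobaric automorphic representations of general linear groups over $E$; hence
$$
L(s,\phi\times\phi_\sigma)=\prod_{i=1}^{r}\prod_{j=1}^{r'}L(s,\tau_i\times\tau_j').
$$
Here each factor is the standard Jacquet--Piatetski-Shapiro--Shalika $L$-function of the pair of conjugate self-dual cuspidal automorphic representations $\tau_i$ and $\tau_j'$ of general linear groups over $E$, and since both are cuspidal, $L(s,\tau_i\times\tau_j')$ is holomorphic at the central point $s=\tfrac12$ (the only possible poles of a Rankin--Selberg $L$-function of cuspidal representations are at $s=0,1$). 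Thus $L(\tfrac12,\phi\times\phi_\sigma)$ is literally the finite product of the complex numbers $L(\tfrac12,\tau_i\times\tau_j')$.

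Next I would invoke Theorem~\ref{th-nva}. Under the standing assumptions (Conjecture~2.3 of \cite{JZ-BF} and Conjecture~\ref{conj-main}), it furnishes, for a $\pi\in\CA_\cusp(G_n)$ with Arthur parameter $\phi$, a $\sigma\in\CA_\cusp(H^{\CO_{\Fr}})$ with generic global Arthur parameter $\phi_\sigma$ such that the central value $L(\tfrac12,\pi\times\sigma)$ is nonzero. Since by definition $L(s,\pi\times\sigma)=L(s,\phi\times\phi_\sigma)$, we get $L(\tfrac12,\phi\times\phi_\sigma)\neq0$.

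Combining the two, we obtain
$$
\prod_{i=1}^{r}\prod_{j=1}^{r'}L(\tfrac12,\tau_i\times\tau_j')=L(\tfrac12,\phi\times\phi_\sigma)\neq0,
$$
which is the asserted identity and non-vanishing; and since a finite product of complex numbers vanishes as soon as one factor does, every $L(\tfrac12,\tau_i\times\tau_j')$ is nonzero, i.e.\ the simultaneous non-vanishing holds. I do not expect a genuine obstacle here: the only point requiring a word of care is the decomposition of $L(s,\phi\times\phi_\sigma)$ into Rankin--Selberg factors together with the absence of a pole of those factors at $s=\tfrac12$, both of which are standard; all the substance has been absorbed into Theorem~\ref{th-nva}.
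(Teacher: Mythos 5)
Your proof is correct and follows exactly the route the paper intends: the paper gives no explicit argument for this corollary, stating it immediately after Theorem~\ref{th-nva} with the parameters written out, precisely because it is immediate from that theorem once one factors $L(s,\phi\times\phi_\sigma)$ over the isobaric constituents. Your added remark about the absence of poles of the individual Rankin--Selberg factors at $s=\tfrac12$ is a sensible word of care but is, as you say, standard.
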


\section{Non-vanishing of the Central Value: $\RU_\Fn\times\RU_1$ Case}\label{sec-unu1}

Based on Conjecture \ref{conj-subr}, which is a special case of Conjecture \ref{conj-main}, we are able to establish the non-vanishing property of the central value of the tensor product $L$-functions
$L(\frac{1}{2},\pi\times\chi)$ for any irreducible cuspidal automorphic representation $\pi$ of the $F$-quasisplit unitary group $G_n^*(\BA)$ with
an automorphic character $\chi$ of $U_1(\BA)$. We treat the case of even $\Fn$ and the case of odd $\Fn$ separately.
We note that Conjecture 2.3 of \cite{JZ-BF} is known for the subregular partition (Proposition 7.4 of \cite{JZ-BF}).
In this case, we have that $\sigma=\chi$.
Hence the results in this section only need the assumption that Conjecture \ref{conj-subr} holds.

\subsection{The case of $\RU_{n,n}$}\label{ssec-Fneven}
We consider $G_n^*=\RU_{n,n}$ with $\Fn=2n$ and $\Fr=n$, which is $F$-quasisplit, and first prove the following proposition.

\begin{prop}\label{prop-2n}
For any generic global Arthur parameter $\phi$ of $\RU_{n,n}$, there exists a generic global Arthur parameter $\phi_2$ of $\RU_{1,1}$ that is
$\RU_{2,0}$-relevant, for some $\RU_{2,0}\in\CU_{2,0}$, such that the global Arthur parameter
$$
\phi\boxplus\phi_2
$$
is a generic global Arthur parameter of $\RU_{n+1,n+1}$ that is $\RU_{n+2,n}$-relevant, for some $\RU_{n+2,n}\in\CU_{n+2,n}$.
\end{prop}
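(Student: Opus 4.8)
The plan is to construct $\phi_2$ as a simple generic parameter $(\chi_2, 1)$ for some conjugate self-dual Hecke character $\chi_2$ of $\GL_1(\BA_E)$, chosen so that (i) $\phi \boxplus \phi_2$ is again a \emph{generic elliptic} global Arthur parameter of $\RU_{n+1,n+1}$, which forces a sign/parity compatibility, and (ii) the relevance conditions hold: $\phi_2$ must be $\RU_{2,0}$-relevant for some anisotropic $\RU_{2,0}$, and $\phi \boxplus \phi_2$ must be $\RU_{n+2,n}$-relevant. The key point is that relevance of a generic elliptic parameter for a given pure inner form is governed, via the endoscopic classification (\cite{Mk15}, \cite{KMSW}) and the local Gan--Gross--Prasad/Vogan picture, by a product of local signs that must match the Hasse invariant of the Hermitian form defining the inner form; so the whole statement reduces to a statement about choosing local components of a single Hecke character to hit a prescribed collection of signs, subject to the global product formula.

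First I would recall that $\phi = (\tau_1,1)\boxplus\cdots\boxplus(\tau_r,1)$ is a generic elliptic parameter of $\RU_{n,n}$, so $\sum a_i = 2n$ and the parity condition of \cite[Section 2.4]{Mk15} holds: each $\tau_i$ is conjugate self-dual of the appropriate type so that the total datum has the parity of $G_n^* = \RU_{2n}$, i.e. the sign $\kappa$ associated to $\RU_{2n}$ (even case). Adjoining $(\chi_2,1)$ with $\chi_2$ conjugate self-dual of dimension $2$, i.e. $\chi_2 = \chi' \boxplus (\chi')^{-1}$ type — more precisely I want $\phi_2$ itself to be a generic elliptic parameter of $\RU_{1,1}$, which means $\phi_2 = (\chi, 1) \boxplus (\chi^*, 1)$ or a single conjugate-symplectic character of $\GL_2(\BA_E)$, depending on parity — I must ensure: $\chi_2$ is not equivalent to any $\tau_i$ (multiplicity-freeness, easy since we have a continuum of choices of $\chi$), and the total parity of $\phi\boxplus\phi_2$ matches that of $\RU_{2n+2}$. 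Since passing from $\RU_{2n}$ to $\RU_{2n+2}$ changes the ambient dimension by $2$, the parity sign $(-1)^{N-1}\kappa$ is consistent provided $\phi_2$ carries the correct conjugate-orthogonal/conjugate-symplectic type; this is a bookkeeping check using \cite[Lemma 2.2.1, Theorem 2.5.4]{Mk15} and the Asai $L$-function pole criterion recalled in the excerpt.

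Next, the relevance conditions. For $\phi_2$ to be $\RU_{2,0}$-relevant, by the classification the local Vogan packet $\wt\Pi_{\phi_{2,v}}[\RU_2^*]$ at each place $v$ must contain a member on the anisotropic form $\RU_{2,0}$; at a place $v$ this is controlled by whether a certain local sign (a product of local root numbers or Asai $\epsilon$-factors attached to $\chi_v$) equals the local Hasse invariant $\inv_v(\RU_{2,0})$. The anisotropic $\RU_{2,0}$ is ramified at a finite even set $T$ of places (the archimedean ones plus possibly finite ones), and I may choose $T$, hence choose $\RU_{2,0}$, at my convenience. So I need a conjugate self-dual character $\chi$ of $\GL_1(\BA_E)$ whose local signs at each $v$ equal a prescribed value; since the relevant local signs depend on $\chi_v$ through explicit epsilon/gamma factors and can be adjusted by twisting $\chi$ by characters with prescribed local behavior, and since only the \emph{product} of all local Hasse invariants is constrained (it must be trivial, which is exactly the statement that $\RU_{2,0}$ exists as a global form), such a $\chi$ exists. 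The same mechanism simultaneously handles $\RU_{n+2,n}$-relevance of $\phi\boxplus\phi_2$: the global form $\RU_{n+2,n}$ has $F$-rank $n$ and is the inner form of $\RU_{n+1,n+1}^* = G_{n+1}^*$ obtained by twisting the Hermitian form by the class of $\RU_{2,0}$ in a suitable sense, so its local invariants are $\inv_v(\RU_{2,0})$, and the relevance condition at $v$ for $\phi\boxplus\phi_2$ is the product over the summands of the local GGP signs — which at $v$ is (sign for $\phi_v$)$\cdot$(sign for $\phi_{2,v}$), and the first factor is pinned down by $G_n^* = \RU_{n,n}$ being quasi-split (so $\phi_v$ is relevant to the quasi-split/split form, contributing the trivial invariant at almost all $v$), while the second we have just arranged to be $\inv_v(\RU_{2,0})$. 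Hence the two relevance conditions are compatible and are met by the same $\chi$.

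The main obstacle is the last compatibility: showing that a \emph{single} choice of $\chi$ (equivalently $\phi_2$) can simultaneously make $\phi_2$ relevant to an anisotropic $\RU_{2,0}$ \emph{and} make $\phi\boxplus\phi_2$ relevant to $\RU_{n+2,n}$, while also preserving genericity/ellipticity and multiplicity-freeness. I expect to handle this by first fixing the finite even ramification set $T$ for $\RU_{2,0}$ to include all archimedean places and be otherwise minimal, then using the surjectivity of the map from conjugate self-dual Hecke characters to tuples of admissible local signs (away from the single global product constraint, which is automatically satisfied because the target inner forms genuinely exist) to produce $\chi$; twisting by a character of $\RU_1$ unramified outside $T$ gives enough freedom. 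Genericity is automatic since $b_i = 1$ throughout, and ellipticity only fails if $\chi_2$ collides with some $\tau_i$ — avoidable since the $\tau_i$ are finite in number and the admissible $\chi$ form an infinite family (e.g. one can twist by characters ramified at an auxiliary place where all $\tau_i$ are unramified).
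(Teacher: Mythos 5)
Your high-level philosophy (relevance is governed by characters of component groups matching local Hasse invariants, subject to a global product constraint from coherence) is the right one and is indeed what the paper's argument formalizes. But there are two concrete gaps.

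First, your candidate $\phi_2$ is not correctly specified. A generic elliptic parameter of $\RU_{1,1}$ must be a multiplicity-free formal sum of \emph{conjugate self-dual} cuspidals of the right sign. Your option $(\chi,1)\boxplus(\chi^*,1)$ fails: if $\chi\ne\chi^*$ then $\chi$ is not conjugate self-dual, so neither summand is a valid constituent; if $\chi=\chi^*$ the sum is not multiplicity free. Your alternative phrasing (``a single conjugate-symplectic character of $\GL_2(\BA_E)$'') is also not quite what is meant; what one actually wants, and what the paper takes, is $\phi_2=(\tau,1)$ a \emph{simple} generic parameter with $\tau$ a conjugate self-dual cuspidal automorphic representation of $\GL_2(\BA_E)$ of the correct sign. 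This choice matters, because the paper arranges $\tau$ to be the base change of a cuspidal $\pi''$ on an anisotropic $\RU_{2,0}$ that is \emph{supercuspidal at two auxiliary inert places}, which is precisely what is needed for the second gap.

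Second, and more importantly, the step ``such a $\chi$ exists'' by sign-matching against the Hasse invariants skips the real content. To place a member of the local Vogan packet $\wt\Pi_{\phi'_v}[G_{n+1}^*]$ on the non-quasisplit form $\RU_{n+2,n}(F_v)$ at a place $v$, it is not enough that the target local invariant is nontrivial; one needs the local component group $\CS_{\phi'_v}$ to admit a character restricting nontrivially to $\CZ=\CZ(\GL_{2n+2}(\BC))^{\Gamma_{E/F}}$. If the local parameter $\phi'_v$ is such that this restriction map is not injective (or the component group is too small), no such character exists and relevance at $v$ fails, regardless of how one twists a Hecke character. The paper handles this by fixing two inert places $v_1,v_2$ of odd residual characteristic where $\phi$ is unramified, choosing $\pi''$ so that $\phi_{2,v_i}$ is irreducible (supercuspidal), and then explicitly checking via the local parameter decomposition \eqref{eq:A-unram} and \cite[Proposition 21.1]{GGP12} that $\CS_{\phi'_{v_i}}$ contains $\CZ$ nontrivially; one then picks characters $\chi_{v_i,-}$ nontrivial on $\CZ$ at both places and trivial everywhere else, invokes \cite[Proposition~1.2.3]{HL04} plus the Hasse principle to realize the resulting collection of local forms as a single global pure inner form in $\CU_{n+2,n}$, and verifies coherence (automorphy) via the product formula. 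In your write-up this local component-group control is exactly what is assumed away when you assert ``surjectivity of the map from conjugate self-dual Hecke characters to tuples of admissible local signs.'' Without it, the argument does not close.
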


\begin{proof}
Let  $\xi_{\chi_\kappa}$ be an $L$-embedding of ${}^L\RU_{n,n}$ to ${}^LG_{E/F}(2n)$, as recalled in Section \ref{ssec-dsap}.
Other unexplained notation can be found in Chapter 2 of \cite{Mk15} and in \cite{GGP12}.
The given generic global Arthur parameter $\phi$ belongs to $\wt{\Phi}_2(\RU_{n,n},\xi_{\chi_\kappa})$.
Then, as a parameter in  $\wt{\Phi}_\el(2n)$, $\phi$, which is denoted by $\phi^{2n}$ in \cite{Mk15} and also as in \eqref{ellggap},
is conjugate self-dual of sign $-\kappa$.
This means that the Asai $L$-function $L(s,\phi,\Asai^{-\kappa})$ has a pole at $s=1$.

Take $\phi_2$ in $\wt{\Phi}_{\rm sim}(\RU_{1,1},\xi_{\chi_\kappa})$, and define
$\phi':=\phi\boxplus \phi_2$.
By \cite[Section 2.4]{Mk15}, $\phi'$ belongs to $\wt{\Phi}_2(\RU_{n+1,n+1},\xi_{\chi_\kappa})$ and $\CZ:=\CZ(\GL_{2n+2}(\BC))^{\Gamma_{E/F}}$
arises a non-trivial subgroup of the component group $\CS_{\phi'}$.
Equivalently, under the $L$-embedding $\xi_{\chi_\kappa}$, as a parameter in $\wt{\Phi}_\el(2)$, $\phi_2$ is conjugate self-dual of the same sign
with $\phi$.

Let $\phi_v$ be the localization of $\phi$ at a local place  $v$.
Fix two finite local places $v_i$ of $F$, with $i\in\{1,2\}$, and assume that the two finite local places satisfy the following three conditions:
\begin{enumerate}
	\item \label{item:v-1} $v_1$ and $v_2$ are inert;
	\item \label{item:v-2} for any $i\in\{1,2\}$, the characteristic of the residue field of $F_{v_i}$ is not 2; and
	\item \label{item:v-3} $\phi_{v_1}$ and $\phi_{v_2}$ are unramified.
\end{enumerate}
Choose an infinite dimensional irreducible automorphic representation $\pi''$ of  $\RU_{2,0}(\BA_F)$ (where $\RU_{2,0}$ is an $F$-anisotropic form of $\RU_{1,1}$) such that $\pi''_{v_1}$ and $\pi''_{v_2}$ are  supercuspidal.
The existence of  $\pi''$ is given by a simple version of the Arthur-Selberg trace formula (refer to  \cite[Theorem 3.1]{G96}, for instance).
Take $\phi_2$ to be the global Arthur parameter of $\pi''$. It follows that $\phi_2$ belongs to $\wt{\Phi}_{\rm sim}(\RU_{1,1},\xi_{\chi_\kappa})$, and
then $\phi'$ belongs to $\wt{\Phi}_2(\RU_{n+1,n+1},\xi_{\chi_\kappa})$.

It remains to show that $\phi'$ is $\RU_{n+2,n}$-relevant for some $\RU_{n+2,n}\in\CU_{n+2,n}$.
It is enough to construct a member in the global Arthur packet $\wt{\Pi}_{\phi'}(\RU_{n+2,n})$ for a certain $\RU_{n+2,n}\in\CU_{n+2,n}$.

Since $v_i$ is inert, as a parameter for $G_{E_{v_i}/F_{v_i}}(2n+2)$, $\phi'_{v_i}$ can be written as
\begin{equation}\label{eq:A-unram}
\oplus_{j}(\chi_{i,j}\oplus \iota(\chi_{i,j})^{-1})\oplus n_i\cdot 1\oplus m_i\cdot \xi_{E_{v_i}/F_{v_i}}\oplus \phi_{2,v_i},	
\end{equation}
where $\xi_{E_{v_i}/F_{v_i}}$ is the nontrivial unramified quadratic character of $E_{v_i}$,
$m_i$ and $n_i$ are the multiplicities of the characters $1$ and $\xi_{E_{v_i}/F_{v_i}}$ respectively, and $\chi_{i,j}\ne \iota(\chi_{i,j})^{-1}$.

Note that $\xi_{E_{v_i}/F_{v_i}}\vert_{F^\times_{v_i}}$ equals the quadratic character $\omega_{E_{v_i}/F_{v_i}}$ of $F^\times_{v_i}$ by the local class field theory. It follows that $\xi_{E_{v_i}/F_{v_i}}$ is conjugate symplectic.
Since $\pi''_{v_i}$ is supercuspidal, the corresponding local Arthur parameter $\phi_{2,v_i}$ is simple, and $\CS_{\phi'_{v_i}}$ is non-trivial.
We may regard $\CZ$ as a non-trivial subgroup of $\CS_{\phi'_{v_i}}$.
Following from \cite[Chapter 1]{KMSW} (or  \cite[Section 25]{GGP12}),
the local Vogan packet $\wt{\Pi}_{\phi'_{v_i}}[G^*_{n+1}]$ contains at least two members.
Let $\chi_{v_i,-}$ be the character of $\CS_{\phi'_{v_i}}$
with the property that the restriction of $\chi_{v_i,-}$ into the component group of the centralizer of $\phi_{2,v_i}$ is non-trivial and the restriction into the other component  is trivial.
Via the local Langlands correspondence as established in \cite{KMSW}, take $\pi'_{v_i,-}$ to be the representation in $\wt{\Pi}_{\phi'_{v_i}}[G^*_{n+1}]$
corresponding to the character $\chi_{v_i,-}$.
Since $\chi_{v_i,-}\vert_{\CZ}$ is non-trivial,
$\pi'_{v_i,-}$ is defined on the unique non-quasisplit unitary group $\RU_{n+2,n}(F_{v_i})$ of $F$-rank $n$.

For local places $v\ne v_i$, take $\pi'_{v}$ to be the generic member in $\wt{\Pi}_{\phi'_{v}}[G^*_{n+1}]$ whose associated character $\chi_v$ of $\CS_{\phi'_v}$ is trivial.
Then we take the restricted tensor product with the chosen local representations:
$$
\pi'=(\otimes_{v\ne v_i}\pi'_v)\otimes\pi'_{v_1,-}\otimes \pi'_{v_2,-}.
$$
It is clear that $\pi'$ is a member in the global Vogan packet $\wt{\Pi}_{\phi'}[G^*_{n+1}]$.
By  \cite[Proposition 1.2.3]{HL04}, there exists a global pure inner form $G_{n+1}$ of $G_{n+1}^*$ such that $G_{n+1}(F_{v})$ is quasisplit when $v\ne v_i$
and non-quasisplit when $v=v_i$ for $i=1,2$.
By the Hasse principle for Hermitian spaces, $G_{n+1}$ is of $F$-rank $n$, and is an $F$-pure inner form of $\RU_{n+1,n+1}$ of type $\RU_{n+2,n}$, that is,
$G_{n+1}\in\CU_{n+2,n}$.
Hence $\pi'$ is coherent and defined on  the unitary group $G_{n+1}$.
Following from \cite[Chapter 1]{KMSW} (or  \cite[Section 25]{GGP12}), $\pi'$ is an irreducible cuspidal automorphic representation of $G_{n+1}(\BA)$ with multiplicity one, because
$$
\prod_{v\ne v_i}\chi_{v}\cdot \chi_{v_1,-}\cdot \chi_{v_2,-}=1
$$
is the trivial character of $\CS_{\phi'}$. Because $\pi'$ has $\phi'$ as its global Arthur parameter, we obtain that
$\phi'$ is $\RU_{n+2,n}$-relevant for some $\RU_{n+2,n}\in\CU_{n+2,n}$. We are done.
\end{proof}

Now we can establish the following non-vanishing result.

\begin{thm}\label{th-nveven}
Assume that Conjecture \ref{conj-subr} holds for unitary groups $\RU_{n+2,n}\in\CU_{n+2,n}$.
For any generic global Arthur parameter $\phi$ of $\RU_{n,n}$, which is $F$-quasisplit,
there exists an automorphic character $\chi$ of $\RU_1(\BA)$ such that the
central value of the tensor product $L$-function $L(\frac{1}{2},\pi\times\chi)$ is nonzero for all automorphic members $\pi$ in the global Arthur packet
$\wt{\Pi}_\phi(\RU_{n,n})$.
\end{thm}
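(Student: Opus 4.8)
The plan is to deduce the theorem from Conjecture \ref{conj-subr}, applied to a non-quasisplit unitary group of $F$-rank one less than its quasisplit form, exactly along the template spelled out after Conjecture \ref{conj-main}: nonvanishing of a Bessel period for a suitable cuspidal representation yields, via one direction of the global Gan--Gross--Prasad conjecture, the nonvanishing of a central $L$-value, and the isobaric factorization of that $L$-function then isolates the factor attached to $\phi$.

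First I would apply Proposition \ref{prop-2n} to the given generic global Arthur parameter $\phi$ of $\RU_{n,n}$. This produces a generic global Arthur parameter $\phi_2$ of $\RU_{1,1}$, relevant to some $F$-anisotropic $\RU_{2,0}\in\CU_{2,0}$, such that
$$
\phi':=\phi\boxplus\phi_2\in\wt{\Phi}_2(\RU_{n+1,n+1},\xi_{\chi_\kappa})
$$
is $\RU_{n+2,n}$-relevant for some $\RU_{n+2,n}\in\CU_{n+2,n}$ of $F$-rank $\Fr=n$. For this group one has $\Fn=2n+2$ and $\Fd_0=\Fn-2\Fr=2$, so the partition $\udl{p}_\Fr=[(2\Fr+1)1^{\Fn-2\Fr-1}]$ of \eqref{pFr} is precisely the subregular partition $[(2n+1)1]$ that Conjecture \ref{conj-subr} governs (with the $n$ there replaced by $n+1$). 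Applying Conjecture \ref{conj-subr} to $\RU_{n+2,n}$ --- the parameter $\phi'$ being generic and $\RU_{n+2,n}$-relevant --- I obtain a member $\pi_0'\in\wt{\Pi}_{\phi'}(\RU_{n+2,n})$ with nonzero $\Fr$-th Bessel module $\CF^{\CO_{\Fr}}(\pi_0')$; since a generic global Arthur parameter contributes only cuspidal members to the discrete spectrum, $\pi_0'$ is cuspidal.

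By Propositions 2.5 and 2.2 of \cite{JZ-BF}, the group $H^{\CO_{\Fr}}$ is, as $\Fd_0=2$, the $F$-torus $\RU_1$, and $\CF^{\CO_{\Fr}}(\pi_0')$ is a nonzero submodule of $L^2_\cusp(H^{\CO_{\Fr}})$; since $H^{\CO_{\Fr}}=\RU_1$ is abelian and $F$-anisotropic, this space decomposes as a Hilbert direct sum of automorphic characters of $\RU_1(\BA)$. Hence there is an automorphic character $\chi$ of $\RU_1(\BA)$, with global Arthur parameter $\phi_\chi$ (simple and, being the parameter of a character, automatically generic), and a form $\varphi_{\pi_0'}\in\pi_0'$ with
$$
\left<\CF^{\psi_{\CO_{\Fr}}}(\varphi_{\pi_0'}),\chi\right>_{H^{\CO_{\Fr}}}\neq 0 ;
$$
here the Generic Summand Conjecture, Conjecture 2.3 of \cite{JZ-BF}, holds for the subregular partition by Proposition 7.4 of \cite{JZ-BF}, so that indeed $\sigma=\chi$. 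Theorem 5.5 of \cite{JZ-BF}, one direction of the global Gan--Gross--Prasad conjecture, then gives
$$
L(\tfrac{1}{2},\pi_0'\times\chi)=L(\tfrac{1}{2},\phi'\times\phi_\chi)\neq 0 .
$$
Since $\phi'=\phi\boxplus\phi_2$, the Rankin--Selberg $L$-function factors as $L(s,\phi'\times\phi_\chi)=L(s,\phi\times\phi_\chi)\cdot L(s,\phi_2\times\phi_\chi)$, so $L(\frac{1}{2},\phi\times\phi_\chi)\neq 0$. Finally, following \cite{A13}, \cite{Mk15}, \cite{KMSW}, the $L$-function $L(s,\pi\times\chi)$ depends only on $\phi$ and $\chi$, i.e. $L(s,\pi\times\chi)=L(s,\phi\times\phi_\chi)$; hence $L(\frac{1}{2},\pi\times\chi)\neq 0$ for every automorphic member $\pi\in\wt{\Pi}_\phi(\RU_{n,n})$, which is the assertion.

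The only genuinely delicate points I foresee lie in the reduction step: checking that the $F$-rank recipe \eqref{pFr} applied to $\RU_{n+2,n}$ lands exactly on the subregular partition $[(2n+1)1]$ that Conjecture \ref{conj-subr} addresses, that the centralizer $H^{\CO_{\Fr}}$ collapses to $\RU_1$ rather than to a larger unitary group (this is where $\Fd_0=2$ and Proposition 2.5 of \cite{JZ-BF} enter), and that the $L$-embeddings $\xi_{\chi_\kappa}$ supplied by Proposition \ref{prop-2n} are compatible enough that the factorization $L(s,\phi'\times\phi_\chi)=L(s,\phi\times\phi_\chi)\cdot L(s,\phi_2\times\phi_\chi)$ is the one normalized via \cite{Mk15} and \cite{KMSW}. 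Everything else is a direct appeal to Proposition \ref{prop-2n}, Conjecture \ref{conj-subr}, and the quoted results of \cite{JZ-BF}.
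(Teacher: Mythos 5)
Your proposal is correct and follows essentially the same route as the paper: apply Proposition \ref{prop-2n} to pass to $\phi\boxplus\phi_2$ on $\RU_{n+2,n}$, invoke Conjecture \ref{conj-subr} to produce a member with nonzero subregular Bessel module, identify $H^{\CO_\Fr}=\RU_1$, and chain the Generic Summand Conjecture (known here by Proposition 7.4 of \cite{JZ-BF}) with the GGP direction of Theorem 5.5 of \cite{JZ-BF} to get nonvanishing of $L(\frac{1}{2},\phi'\times\phi_\chi)$, then factor. The only cosmetic difference is that the paper cites Theorem \ref{th-nva} to encapsulate the Bessel-module/GGP chain and then phrases the final step via the endoscopic transfer $\pi\boxplus\sigma$, whereas you unwind Theorem \ref{th-nva} explicitly and appeal directly to the fact that $L(s,\pi\times\chi)$ depends only on the global Arthur parameter --- the same argument in slightly different dress.
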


\begin{proof}
For any generic global Arthur parameter $\phi$ of $\RU_{n,n}$, by Proposition \ref{prop-2n}, there exists
a generic global Arthur parameter $\phi_2$ of $\RU_{1,1}$ that is
$\RU_{2,0}$-relevant for some $\RU_{2,0}\in\CU_{2,0}$, such that the endoscopic sum
$
\phi\boxplus\phi_2
$
is a generic global Arthur parameter of $\RU_{n+1,n+1}$ that is $\RU_{n+2,n}$-relevant for some $\RU_{n+2,n}\in\CU_{n+2,n}$.

By applying Conjecture \ref{conj-subr} to the generic global Arthur parameter $\phi\boxplus\phi_2$ and the unitary group $\RU_{n+2,n}$,
we deduce that there exists an automorphic member $\Pi_0$ in the global Arthur packet $\wt{\Pi}_{\phi\boxplus\phi_2}(\RU_{n+2,n})$ with the property that
$\Pi_0$ has a nonzero Fourier coefficient associated to the subregular partition $\udl{p}_\subr=\udl{p}_{n}=[(2n+1)1]$.

We note that for $\udl{p}_{\subr}=[(2n+1)1]$ of $2n+2$, the subgroup $H^{\CO_n}$ is $\RU_1$, and hence Conjecture 2.3 of \cite{JZ-BF}
holds. We refer to Proposition 7.4 of \cite{JZ-BF} for the detail.
Therefore, by applying Theorem \ref{th-nva} to the current situation, we obtain that there exists an automorphic character $\chi$ of
$\RU_1(\BA)$ such that the central value of the tensor product $L$-function $L(\frac{1}{2},\Pi_0\times\chi)$ is nonzero.

Now for any automorphic member $\pi$ of the global Arthur packet $\wt{\Pi}_\phi(\RU_{n,n})$ and any automorphic member $\sigma$ of the global Arthur packet
$\wt{\Pi}_{\phi_2}(U_{2,0})$, the endoscopic transfer of $\pi$ and $\sigma$, which is denoted by $\pi\boxplus\sigma$, belongs to the global
Arthur packet $\wt{\Pi}_{\phi\boxplus\phi_2}(\RU_{n+2,n})$, following \cite{KMSW}. Hence we obtain the following identity
$$
L(\frac{1}{2},\Pi_0\times\chi)=L(\frac{1}{2},(\pi\boxplus\sigma)\times\chi)=L(\frac{1}{2},\pi\times\chi)\cdot L(\frac{1}{2},\sigma\times\chi).
$$
In particular, the central $L$-value $L(\frac{1}{2},\pi\times\chi)$ is nonzero. We are done.
\end{proof}

\subsection{The case of $\RU_{n+1,n}$}\label{ssec-Fnodd}
We consider $G_n^*=\RU_{n+1,n}$ when $\Fn=2n+1$ and $\Fr=n$, which is $F$-quasisplit, and prove the following analogue of Proposition \ref{prop-2n}.

\begin{prop}\label{prop-2n1}
For any generic global Arthur parameter $\phi$ of $\RU_{n+1,n}$, there exists a  generic global Arthur parameter $\phi_1$ of $\RU_{1}$,
such that the global Arthur parameter
$$
\phi\boxplus\phi_1
$$
is a generic global Arthur parameter of $\RU_{n+1,n+1}$ that is $\RU_{n+2,n}$-relevant for some $\RU_{n+2,n}\in\CU_{n+2,n}$.
\end{prop}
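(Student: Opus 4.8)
The plan is to follow, and in fact simplify, the argument for Proposition~\ref{prop-2n}: the simplification is that $\RU_1$ is a torus, so every conjugate self-dual Hecke character of $E$ is automorphic with singleton local and global $L$-packets, and no appeal to a simple trace formula is needed to produce $\phi_1$. First I would fix $\kappa\in\{\pm 1\}$ so that the given generic global Arthur parameter $\phi=(\tau_1,1)\boxplus\cdots\boxplus(\tau_r,1)$ lies in $\wt{\Phi}_2(\RU_{n+1,n},\xi_{\chi_\kappa})$. Since $\Fn=2n+1$ is odd, the parity of the datum $(\RU_{n+1,n},\xi_{\chi_\kappa})$ is $(-1)^{2n}\kappa=\kappa$, so as a parameter in $\wt{\Phi}_\el(2n+1)$ the parameter $\phi$ is conjugate self-dual of sign $\kappa$; equivalently, each $\tau_i$ is conjugate self-dual of sign $\kappa$. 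Since $\RU_{n+1,n+1}$ has even dimension $2n+2$, the datum $(\RU_{n+1,n+1},\xi_{\chi_{-\kappa}})$ has parity $(-1)^{2n+1}(-\kappa)=\kappa$, and the datum $(\RU_1,\xi_{\chi_\kappa})$ has parity $(-1)^0\kappa=\kappa$.

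Next, choose two finite places $v_1,v_2$ of $F$ that are inert in $E$, of residue characteristic $\neq 2$, and at which $\phi$ is unramified; such places exist since the inert places have Dirichlet density $1/2$. I would then choose a conjugate self-dual Hecke character $\chi_1$ of $\BA_E^\times/E^\times$ of sign $\kappa$ (explicitly, $\chi_1|_{\BA_F^\times}$ is trivial if $\kappa=1$ and equals $\omega_{E/F}$ if $\kappa=-1$), subject to $\chi_1\not\cong\tau_i$ for all $i$ and $\chi_1$ ramified at $v_1$ and $v_2$. Such characters exist in abundance: the group of conjugate self-dual Hecke characters of $E$ of a fixed sign is infinite (for sign $-1$ one starts from any character of $\BA_E^\times/E^\times$ restricting to $\omega_{E/F}$ on $\BA_F^\times$, which is automatically conjugate self-dual since $\omega_{E/F}$ is trivial on norms), and one may further twist by Hecke characters of $E$ that are trivial on $\BA_F^\times$ and ramified at prescribed places. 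Set $\phi_1:=(\chi_1,1)\in\wt{\Phi}_{\mathrm{sim}}(\RU_1,\xi_{\chi_\kappa})$. Then $\phi':=\phi\boxplus\phi_1$ is a multiplicity-free isobaric sum of conjugate self-dual summands all of sign $\kappa$, of total dimension $2n+2$, hence $\phi'\in\wt{\Phi}_2(\RU_{n+1,n+1},\xi_{\chi_{-\kappa}})$ by \cite[Section 2.4]{Mk15}, and $\CZ:=\CZ(\GL_{2n+2}(\BC))^{\Gamma_{E/F}}$ arises as a nontrivial subgroup of the component group $\CS_{\phi'}$.

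It then remains to show that $\phi'$ is $\RU_{n+2,n}$-relevant for some $\RU_{n+2,n}\in\CU_{n+2,n}$, which I would do exactly as in the proof of Proposition~\ref{prop-2n}. At each $v_i$ the local parameter $\phi'_{v_i}=\phi_{v_i}\oplus\phi_{1,v_i}$ has $\phi_{1,v_i}$ as a ramified conjugate self-dual character of sign $\kappa$ distinct from every (unramified) summand of $\phi_{v_i}$, so $\CS_{\phi'_{v_i}}$ contains the $\BZ/2$ attached to that slot and $\CZ$ sits nontrivially inside $\CS_{\phi'_{v_i}}$. Following \cite[Chapter 1]{KMSW}, let $\chi_{v_i,-}$ be the character of $\CS_{\phi'_{v_i}}$ nontrivial precisely on the $\phi_{1,v_i}$-slot; then $\chi_{v_i,-}|_{\CZ}$ is nontrivial, so the corresponding member $\pi'_{v_i,-}$ of the local Vogan packet $\wt{\Pi}_{\phi'_{v_i}}[G^*_{n+1}]$ is a representation of the non-quasisplit $\RU_{n+2,n}(F_{v_i})$ of $F$-rank $n$. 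For $v\neq v_1,v_2$, let $\pi'_v$ be the generic member of $\wt{\Pi}_{\phi'_v}[G^*_{n+1}]$ with trivial character of $\CS_{\phi'_v}$, and form $\pi'=(\otimes_{v\neq v_i}\pi'_v)\otimes\pi'_{v_1,-}\otimes\pi'_{v_2,-}\in\wt{\Pi}_{\phi'}[G^*_{n+1}]$. Because $\chi_{v_1,-}$ and $\chi_{v_2,-}$ are each supported on the single global slot of $\phi_1$, their product is trivial on $\CS_{\phi'}$; since $\phi'$ is generic the sign character $\epsilon_{\phi'}$ is trivial, so the Arthur multiplicity formula holds. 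By \cite[Proposition 1.2.3]{HL04} there is a global pure inner form $G_{n+1}$ of $G^*_{n+1}$, quasisplit away from $v_1,v_2$ and non-quasisplit at $v_1,v_2$; by the Hasse principle for Hermitian spaces $G_{n+1}$ has $F$-rank $n$, i.e. $G_{n+1}\in\CU_{n+2,n}$, and $\pi'$ is coherent and cuspidal automorphic on $G_{n+1}(\BA)$ with global Arthur parameter $\phi'$. Hence $\phi'$ is $\RU_{n+2,n}$-relevant, which finishes the proof.

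The only genuinely new point, and hence the main (minor) obstacle, is the parity bookkeeping: one must track that the isobaric sum with the sign-$\kappa$ character $\chi_1$ lands in $\wt{\Phi}_2(\RU_{n+1,n+1},\xi_{\chi_{-\kappa}})$ (the embedding sign flips from $\kappa$ to $-\kappa$ as the dimension passes from odd to even), and one must produce a conjugate self-dual Hecke character of the prescribed sign that is distinct from the finitely many $\tau_i$ and ramified at the two chosen inert places. Both are routine consequences of class field theory. Everything else — the local component-group computation, the coherence check, and the invocation of \cite[Proposition 1.2.3]{HL04} and the Hasse principle — is a verbatim transcription of the corresponding steps in the proof of Proposition~\ref{prop-2n}, only simpler.
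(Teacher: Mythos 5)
Your proof is correct and follows the overall strategy of the paper's proof of Proposition~\ref{prop-2n}, but it diverges from the paper's own proof of Proposition~\ref{prop-2n1} in one concrete and important way: the choice of the local components $\phi_{1,v_1},\phi_{1,v_2}$. The paper takes $\phi_1$ \emph{unramified} at $v_1,v_2$, so that $\phi_{1,v_i}$ equals $1$ or $\xi_{E_{v_i}/F_{v_i}}$ (whichever has sign $\kappa$), and thus gets absorbed into an already-present unramified summand of $\phi_{v_i}$; the relevant multiplicity then jumps from odd (forced by $\dim\phi_{v_i}=2n+1$) to even. You instead take $\chi_1$ \emph{ramified} at $v_1,v_2$, so $\phi_{1,v_i}$ is a brand-new, multiplicity-one, sign-$\kappa$ summand, contributing a fresh $\mathrm{O}(1)\cong\BZ/2$ factor to the centralizer on which $-I$ visibly acts by $-1$. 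Your choice makes the two facts you need — that $\CZ$ maps nontrivially into $\CS_{\phi'_{v_i}}$ and that a character of $\CS_{\phi'_{v_i}}$ supported on exactly that slot restricts nontrivially to $\CZ$ while pulling back trivially to the $\tau_j$-slots of the global group — entirely transparent, and it exactly parallels the even case Proposition~\ref{prop-2n} where the supercuspidal $\phi_{2,v_i}$ plays the same ``new multiplicity-one slot'' role. With the paper's unramified choice, the image of $-I$ in $\pi_0(S_{\phi'_{v_i}})$ is $(-1)^{n_i+1}$ (resp.\ $(-1)^{m_i+1}$) with $n_i$ (resp.\ $m_i$) odd, which is trivial; so the delicate count the paper attributes to \cite[Prop.~21.1(iii),(iv)]{GGP12} is exactly the step your ramified choice sidesteps. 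In short: same architecture, same invocation of \cite[Prop.~1.2.3]{HL04}, the Hasse principle, and \cite[Chapter~1]{KMSW}, but a cleaner (and, as far as I can tell, more robust) local input at the auxiliary places; you also make explicit the needed condition $\chi_1\not\cong\tau_i$ (required for ellipticity of $\phi'$) which the paper leaves implicit. The only thing I would ask you to spell out a bit more is the class-field-theoretic existence of the global conjugate self-dual Hecke character of the prescribed sign, ramified at $v_1,v_2$, and distinct from the finitely many $\tau_i$ — it is true, but a one-line justification (e.g.\ via the surjectivity of $\BA_E^\times\to\BA_E^\times/E^\times\BA_F^\times$ and local prescription at $v_1,v_2$) would complete the argument.
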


\begin{proof}
The proof is similar to that of Proposition \ref{prop-2n}.

Suppose that $\phi$ is in $\wt{\Phi}_2(G^*_n,\xi_\chi)$.
Take a $\phi_1$ in $\wt{\Phi}_{\rm sim}(\RU_1)$ and define $\phi':=\phi\boxplus \phi_1$.
Then $\phi'$ is in $\wt{\Phi}_2(G^*_{n+1},\xi_{\chi_{-\kappa}})$. The key point is to show that $\phi'$ is $\RU_{n+2,n}$-relevant for
some $\RU_{n+2,n}\in\CU_{n+2,n}$.

Note that under the $L$-embedding $\xi_{\chi_{-\kappa}}$, $\phi_1$ is conjugate self-dual of the same sign with $\phi$, as the parameters for
$G_{E/F}(1)$ and $G_{E/F}(2n+1)$ respectively.

Similarly, fix two finite local places $v_i$ of $F$ with $i\in\{1,2\}$ and assume that they have the properties
\eqref{item:v-1}, \eqref{item:v-2} and \eqref{item:v-3} as in the proof of Proposition \ref{prop-2n}.
Since $\phi_1$ and $\phi$ are conjugate self-dual of the same sign,
we may choose  $\phi_1\in\wt{\Phi}_{\rm sim}(\RU_1)$ such that at local places $v_i$,
$\phi_{1,v_i}=1$ if $\phi_{v_i}$ is conjugate orthogonal and $\phi_{1,v_i}=\xi_{E_{v_i}/F_{v_i}}$ if $\phi_{v_i}$ is conjugate-symplectic.
Write $\phi_{v_i}$ as
$$
\oplus_{j}(\chi_{i,j}\oplus \iota(\chi_{i,j})^{-1})\oplus n_i\cdot 1\oplus m_i \cdot \xi_{E_{v_i}/F_{v_i}}.	
$$
Here the notation is the same with \eqref{eq:A-unram}.
By definition of $\phi_1$,
$\phi'_{v_i}$ have summand $$(n_i+1)\cdot 1\oplus m_i\cdot \xi_{E_{v_i}/F_{v_i}}$$ with even $m_i$ if $\phi_{v_i}$ is conjugate orthogonal;
and $$n_i\cdot 1\oplus (m_i+1)\cdot \xi_{E_{v_i}/F_{v_i}}$$ with even $n_i$ if $\phi_{v_i}$ is conjugate symplectic.
Following from Proposition 21.1 (iii) and (iv) of \cite{GGP12}, we obtain that the local component group $\CS_{\phi'_{v_i}}=\CZ\cong\BZ_2$.
Hence the local Vogan packet $\wt{\Pi}_{\phi'_{v_i}}[G^*_{n+1}]$ contains two members.
Write
$$
\wt{\Pi}_{\phi'_{v_i}}[G^*_{n+1}]=\{\pi'_{v_i,+},\pi'_{v_i,-}\}
$$
where $\pi'_{v_i,-}$ (resp.\ $\pi'_{v_i,+}$) corresponds to the non-trivial (resp.\ trivial) character of $\CS_{\phi'_{v_i}}\cong\BZ_2$.
Moreover, $\pi'_{v_i,-}$ is defined over the unique non-quasisplit unitary group $\RU_{n+2,n}(F_{v_i})$ of $F$-rank $n$.

Take a restricted tensor product
$$
\pi'=(\otimes_{v\ne v_i}\pi_v)\otimes\pi'_{v_1,-}\otimes\pi'_{v_2,-}
$$
in the global Vogan packet $\wt{\Pi}_{\phi'}[\RU_{n+1,n+1}]$, where $\pi'_v$ for $v\ne v_i$ is the local generic representation corresponding to the trivial character of $\CS_{\phi'_v}$.
Similarly, applying \cite[ Proposition 1.2.3]{HL04} and  \cite[Chapter 1]{KMSW}, we know that $\pi'$ is an irreducible cuspidal automorphic representation of $\RU_{n+2,n}(\BA)$ for some $\RU_{n+2,n}\in\CU_{n+2,n}$, and has $\phi'$ as its global Arthur parameter. Therefore, the global Arthur parameter $\phi'$ is $\RU_{n+2,n}$-relevant for some $\RU_{n+2,n}\in\CU_{n+2,n}$. We are done.
\end{proof}

Similarly, we can establish the following non-vanishing result.

\begin{thm}\label{th-nvodd}
Assume that Conjecture \ref{conj-subr} holds for unitary groups $\RU_{n+2,n}\in\CU_{n+2,n}$.
For any generic global Arthur parameter $\phi$ of $\RU_{n+1,n}$, which is $F$-quasisplit, there exists an automorphic character $\chi$ of $\RU_1(\BA)$ such that the
central value of the tensor product $L$-function $L(\frac{1}{2},\pi\times\chi)$ is nonzero for all automorphic members $\pi\in\wt{\Pi}_\phi(\RU_{n+1,n})$.
\end{thm}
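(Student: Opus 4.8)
The plan is to follow the argument in the proof of Theorem \ref{th-nveven} step by step, substituting the odd-rank construction of Proposition \ref{prop-2n1} for the even-rank one of Proposition \ref{prop-2n}. Starting from the given generic global Arthur parameter $\phi$ of the $F$-quasisplit odd unitary group $\RU_{n+1,n}$, I would first apply Proposition \ref{prop-2n1} to produce a generic global Arthur parameter $\phi_1$ of $\RU_1$ such that $\phi':=\phi\boxplus\phi_1$ is a generic global Arthur parameter of $\RU_{n+1,n+1}$ which is $\RU_{n+2,n}$-relevant for some $\RU_{n+2,n}\in\CU_{n+2,n}$. The group $\RU_{n+2,n}$ has $F$-rank $\Fr=n$ and dimension $\Fn=2n+2$, so the partition $\udl{p}_\Fr$ appearing in Conjectures \ref{conj-subr} and \ref{conj-main} is exactly the subregular partition $\udl{p}_\subr=[(2n+1)1]$ of $2n+2$.

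Next, invoking the assumption of Conjecture \ref{conj-subr} for the parameter $\phi'$ and the group $\RU_{n+2,n}$, I obtain an automorphic member $\Pi_0$ of the global Arthur packet $\wt{\Pi}_{\phi'}(\RU_{n+2,n})$ with a nonzero Fourier coefficient attached to $\udl{p}_\subr$. For this subregular partition of $2n+2$ the stabilizer $H^{\CO_n}$ is $\RU_1$, so the Generic Summand Conjecture (Conjecture 2.3 of \cite{JZ-BF}) holds here by Proposition 7.4 of \cite{JZ-BF}, and Theorem \ref{th-nva} applies directly: there is an automorphic character $\chi$ of $\RU_1(\BA)$, the descended representation $\sigma$ of that theorem being in this case just $\chi$, which depends only on $\phi'$ --- hence, $\phi_1$ being a fixed choice attached to $\phi$, only on $\phi$ --- and which satisfies $L(\frac{1}{2},\Pi_0\times\chi)\neq 0$.

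Finally, I would propagate this non-vanishing to every member of $\wt{\Pi}_\phi(\RU_{n+1,n})$. Given any automorphic $\pi\in\wt{\Pi}_\phi(\RU_{n+1,n})$ and any automorphic $\chi_1\in\wt{\Pi}_{\phi_1}(\RU_1)$, the endoscopic transfer $\pi\boxplus\chi_1$ lies in $\wt{\Pi}_{\phi'}(\RU_{n+2,n})$ by \cite{KMSW}; since the tensor product $L$-function depends only on the global Arthur parameter and is multiplicative along isobaric sums,
$$
L(\tfrac{1}{2},\Pi_0\times\chi)=L(\tfrac{1}{2},(\pi\boxplus\chi_1)\times\chi)=L(\tfrac{1}{2},\pi\times\chi)\cdot L(\tfrac{1}{2},\chi_1\times\chi),
$$
and as the left-hand side is nonzero, so is $L(\frac{1}{2},\pi\times\chi)$, which is the assertion of the theorem.

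The bulk of the difficulty is already isolated in the two inputs rather than in the body of this theorem: the existence of $\phi_1$ carrying the correct conjugate self-dual sign and the $\RU_{n+2,n}$-relevance of $\phi'$ (Proposition \ref{prop-2n1}), and the assumed subregular Fourier coefficient (Conjecture \ref{conj-subr}). The only subtle point in the body --- handled by Proposition \ref{prop-2n1} --- is the local sign bookkeeping at the two auxiliary inert places $v_i$: one must take $\phi_{1,v_i}$ to be the trivial character when $\phi_{v_i}$ is conjugate orthogonal and the nontrivial unramified quadratic character when $\phi_{v_i}$ is conjugate symplectic, so that $\CS_{\phi'_{v_i}}\cong\BZ_2$ and the non-quasisplit member of the local Vogan packet $\wt{\Pi}_{\phi'_{v_i}}[G^*_{n+1}]$ exists; it is this member that pins $\Pi_0$ down on a form of type $\RU_{n+2,n}$. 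Beyond citing these facts, the proof is a routine transcription of that of Theorem \ref{th-nveven}.
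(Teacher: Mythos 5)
Your proof is correct and follows exactly the route the paper takes: it is the odd-rank transcription of the proof of Theorem \ref{th-nveven}, with Proposition \ref{prop-2n1} replacing Proposition \ref{prop-2n}, the auxiliary parameter $\phi_1$ of $\RU_1$ replacing $\phi_2$ of $\RU_{1,1}$, and the final factorization $L(\tfrac12,\Pi_0\times\chi)=L(\tfrac12,\pi\times\chi)\cdot L(\tfrac12,\chi_1\times\chi)$ yielding the non-vanishing for every member of $\wt{\Pi}_\phi(\RU_{n+1,n})$. The paper explicitly omits the proof of Theorem \ref{th-nvodd} as being "of the same nature" as that of Theorem \ref{th-nveven}; your write-up supplies precisely that transcription.
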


The proof is of the same nature as that of Theorem \ref{th-nveven}, and will be omitted here.

\subsection{Some more non-vanishing results}\label{ssec-mnv}
Recall that the generic global Arthur parameters of $\RU^*_{E/F}(\Fn)$ are of the form
$$
\phi_\Fn=(\tau_1,1)\boxplus\cdots\boxplus(\tau_r,1)
$$
as in \eqref{ellggap}, where for $i=1,2,\cdots,r$, $\tau_i$ is an irreducible
cuspidal automorphic representation of $G_{E/F}(a_i)$ that is conjugate self-dual.
The following non-vanishing property follows from Theorems \ref{th-nveven} and \ref{th-nvodd}.

\begin{thm}\label{th-nvgl}
Let $F$ be a number field, and $E$ be a quadratic extension of $F$.
For any positive integers $a_1,a_2,\cdots,a_r$, let $\tau_i$ be an irreducible cuspidal automorphic representation of $G_{E/F}(a_i,\BA)=\GL_{a_i}(\BA_E)$.
Assume that the following hold:
\begin{enumerate}
\item All the $\tau_i$'s are conjugate self-dual, so that $\phi_\Fn=(\tau_1,1)\boxplus\cdots\boxplus(\tau_r,1)$ with $\Fn=\sum_{i=1}^ra_i$ is
a generic global Arthur parameter of the $F$-quasisplit unitary group $\RU^*_{E/F}(\Fn)$; and
\item Conjecture \ref{conj-subr} holds for unitary groups $\RU_{n+2,n}\in\CU_{n+2,n}$ with $n=[\frac{\Fn}{2}]$.
\end{enumerate}
Then there exists an automorphic character $\chi$ of $\RU_{E/F}(1)$ such that the following product is nonzero:
\begin{equation}
L(\frac{1}{2},\tau_1\times\chi)\cdot L(\frac{1}{2},\tau_2\times\chi)\cdots L(\frac{1}{2},\tau_r\times\chi)\neq 0.
\end{equation}
\end{thm}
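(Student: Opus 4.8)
The plan is to deduce the statement directly from Theorems \ref{th-nveven} and \ref{th-nvodd}, using the endoscopic classification together with the factorization of the tensor product $L$-function through the simple summands of $\phi_\Fn$.

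First I would split into the two cases according to the parity of $\Fn=\sum_{i=1}^r a_i$: if $\Fn=2n$ then the $F$-quasisplit unitary group $\RU^*_{E/F}(\Fn)$ is $\RU_{n,n}$, while if $\Fn=2n+1$ it is $\RU_{n+1,n}$. By hypothesis (1) the isobaric sum $\phi_\Fn=(\tau_1,1)\boxplus\cdots\boxplus(\tau_r,1)$ is a generic global Arthur parameter of this quasisplit group; since the group is quasisplit, $\phi_\Fn$ is automatically relevant, and by the verification of the global Shahidi conjecture in \cite[Section 3.1]{JL-Cogdell} the global Arthur packet $\wt{\Pi}_{\phi_\Fn}(\RU^*_{E/F}(\Fn))$ contains a generic, and in particular automorphic, member $\pi$.

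Next, hypothesis (2) is exactly what is needed to invoke Theorem \ref{th-nveven} (when $\Fn$ is even) or Theorem \ref{th-nvodd} (when $\Fn$ is odd) for the parameter $\phi_\Fn$, with $n=[\frac{\Fn}{2}]$. This produces an automorphic character $\chi$ of $\RU_1(\BA)$ with $L(\frac{1}{2},\pi'\times\chi)\neq 0$ for every automorphic member $\pi'$ of $\wt{\Pi}_{\phi_\Fn}(\RU^*_{E/F}(\Fn))$, in particular for the member $\pi$ above. I would then unwind the definition of the tensor product $L$-function in terms of global Arthur parameters, following \cite{A13}, \cite{Mk15}, \cite{KMSW}: the automorphic character $\chi$ of $\RU_1$ corresponds to a conjugate self-dual Hecke character of $G_{E/F}(1)$, i.e.\ to the simple generic parameter $(\chi,1)$, and $L(s,\pi\times\chi)=L(s,\phi_\Fn\times(\chi,1))$ depends only on $\phi_\Fn$. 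Since $\phi_\Fn$ is the isobaric sum of the simple summands $(\tau_i,1)$, multiplicativity of Rankin--Selberg $L$-functions gives $L(s,\phi_\Fn\times(\chi,1))=\prod_{i=1}^r L(s,\tau_i\times\chi)$, the factorization underlying Corollary \ref{cor-smnv}. Specializing to $s=\frac{1}{2}$ then yields $\prod_{i=1}^r L(\frac{1}{2},\tau_i\times\chi)=L(\frac{1}{2},\pi\times\chi)\neq 0$, which is the assertion.

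The substance of the theorem is contained entirely in Theorems \ref{th-nveven} and \ref{th-nvodd} and, behind them, in Conjecture \ref{conj-subr}; the only points requiring care in the present deduction are bookkeeping --- matching $\RU^*_{E/F}(\Fn)$ with $\RU_{n,n}$ or $\RU_{n+1,n}$, confirming nonemptiness of the relevant global Arthur packet, and checking that the parameter-defined tensor product $L$-function genuinely factors over the summands of $\phi_\Fn$. I do not expect any of these to present a serious obstacle.
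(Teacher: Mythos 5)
Your proof is correct and follows exactly the route the paper intends: the paper simply states that Theorem \ref{th-nvgl} follows from Theorems \ref{th-nveven} and \ref{th-nvodd}, and you have supplied the natural bookkeeping (parity split, nonemptiness of the packet via the global Shahidi conjecture, and factorization of the parameter-defined tensor product $L$-function over the simple summands). Nothing is missing and no alternative argument is introduced.
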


As described in Section \ref{ssec-dsap}, the first assumption for the generic global Arthur parameters is easy to verify if one is given a
pair of generic global Arthur parameters of unitary groups. We are going to prove Theorem \ref{th-nvgl} for $\Fn\leq 4$ without the assumption of
Conjecture \ref{conj-subr} in Section \ref{sec-n4}.
Theorem \ref{th-nvgl} can be reformulated in terms of endoscopic classification for unitary groups as follows.

\begin{thm}\label{th-nvug}
Let $\phi_\Fn$ be a generic global Arthur parameter of the $F$-quasisplit unitary group $\RU^*_{E/F}(\Fn)$, and let $\phi_\Fm$ be
a generic global Arthur parameter of the $F$-quasisplit unitary group $\RU^*_{E/F}(\Fm)$. Assume that $\phi_\Fn\boxplus\phi_\Fm$ is
a generic global Arthur parameter of the $F$-quasisplit unitary group $\RU^*_{E/F}(\Fn+\Fm)$, and assume that Conjecture \ref{conj-subr}
holds for unitary groups $\RU_{r+2,r}\in\CU_{r+2,r}$ with $r=[\frac{\Fn+\Fm}{2}]$.
Then there exists an automorphic character $\chi$ of $\RU_{E/F}(1)$ such that
$$
L(\frac{1}{2},\pi\times\chi)\cdot L(\frac{1}{2},\sigma\times\chi)\neq 0
$$
for any automorphic member $\pi$ in the global Arthur packet $\wt{\Pi}_{\phi_\Fn}(\RU^*_{E/F}(\Fn))$ and
any automorphic member $\sigma$ in $\wt{\Pi}_{\phi_\Fm}(\RU^*_{E/F}(\Fm))$.
\end{thm}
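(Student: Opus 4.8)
The plan is to deduce Theorem \ref{th-nvug} directly from Theorem \ref{th-nvgl}, the point being that in the endoscopic classification the tensor product $L$-function $L(s,\pi\times\chi)$ attached to a member $\pi$ of a global Arthur packet depends only on the global Arthur parameter of $\pi$, and not on the chosen member. First I would write the two given generic global Arthur parameters in their isobaric form
$$
\phi_\Fn=(\tau_1,1)\boxplus\cdots\boxplus(\tau_s,1),\qquad
\phi_\Fm=(\tau_{s+1},1)\boxplus\cdots\boxplus(\tau_r,1),
$$
where each $\tau_i\in\CA_\cusp(G_{E/F}(a_i))$ is conjugate self-dual, $\sum_{i\le s}a_i=\Fn$ and $\sum_{i>s}a_i=\Fm$. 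By hypothesis $\phi_\Fn\boxplus\phi_\Fm=(\tau_1,1)\boxplus\cdots\boxplus(\tau_r,1)$ is a generic global Arthur parameter of the $F$-quasisplit group $\RU^*_{E/F}(\Fn+\Fm)$, which is exactly assumption (1) of Theorem \ref{th-nvgl} for this group; and since $r=[\frac{\Fn+\Fm}{2}]$, the assumed validity of Conjecture \ref{conj-subr} for $\RU_{r+2,r}\in\CU_{r+2,r}$ is exactly assumption (2). So all hypotheses of Theorem \ref{th-nvgl} are met for $\RU^*_{E/F}(\Fn+\Fm)$.

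Next I would invoke Theorem \ref{th-nvgl} for the parameter $\phi_\Fn\boxplus\phi_\Fm$ of $\RU^*_{E/F}(\Fn+\Fm)$. This produces an automorphic character $\chi$ of $\RU_{E/F}(1)$ with
$$
L(\tfrac12,\tau_1\times\chi)\cdot L(\tfrac12,\tau_2\times\chi)\cdots L(\tfrac12,\tau_r\times\chi)\neq0,
$$
so in particular $\prod_{i=1}^{s}L(\tfrac12,\tau_i\times\chi)\neq0$ and $\prod_{i=s+1}^{r}L(\tfrac12,\tau_i\times\chi)\neq0$. Let $\phi_\chi$ be the global Arthur parameter of $\chi$. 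For any automorphic member $\pi\in\wt{\Pi}_{\phi_\Fn}(\RU^*_{E/F}(\Fn))$, the tensor product $L$-function is defined in terms of global Arthur parameters (following \cite{A13}, \cite{Mk15}, \cite{KMSW}) by
$$
L(s,\pi\times\chi)=L(s,\phi_\Fn\times\phi_\chi)=\prod_{i=1}^{s}L(s,\tau_i\times\chi),
$$
which is independent of the chosen member $\pi$; evaluating at $s=\tfrac12$ gives $L(\tfrac12,\pi\times\chi)\neq0$. The identical argument applied to $\phi_\Fm$ yields $L(\tfrac12,\sigma\times\chi)\neq0$ for every automorphic member $\sigma\in\wt{\Pi}_{\phi_\Fm}(\RU^*_{E/F}(\Fm))$. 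Multiplying the two non-vanishing statements finishes the proof.

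I do not expect a genuine obstacle here: once Theorem \ref{th-nvgl} is in hand, the statement reduces to the invariance of the Arthur $L$-function across a global Arthur packet, which is already used in Section \ref{sec-CGAP}; the only thing to be careful about is the bookkeeping that matches the two sets of hypotheses for the group $\RU^*_{E/F}(\Fn+\Fm)$. For completeness one could instead phrase the deduction through the endoscopic transfer $\pi\boxplus\sigma$, which by \cite{Mk15} and \cite{KMSW} lies in the quasisplit packet $\wt{\Pi}_{\phi_\Fn\boxplus\phi_\Fm}(\RU^*_{E/F}(\Fn+\Fm))$ and satisfies $L(s,(\pi\boxplus\sigma)\times\chi)=L(s,\pi\times\chi)\cdot L(s,\sigma\times\chi)$; applying Theorem \ref{th-nveven} or Theorem \ref{th-nvodd} to $\phi_\Fn\boxplus\phi_\Fm$ then gives the same conclusion, exactly as in the last step of the proof of Theorem \ref{th-nveven}. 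In that alternative route the one subtlety to verify is that the transfer of two quasisplit-group members is again carried by the quasisplit form, which follows since the relevant local characters are trivial on the center-detecting subgroup.
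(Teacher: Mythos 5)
Your proposal is correct and takes essentially the same route as the paper, which simply presents Theorem \ref{th-nvug} as a ``reformulation'' of Theorem \ref{th-nvgl} via the endoscopic classification, leaving the details implicit. Your write-up supplies exactly the missing bookkeeping: the isobaric decomposition, the invocation of Theorem \ref{th-nvgl} for the combined parameter $\phi_\Fn\boxplus\phi_\Fm$, and the observation that the tensor product $L$-function $L(s,\pi\times\chi)=L(s,\phi_\Fn\times\phi_\chi)$ depends only on the Arthur parameter and factors as $\prod_i L(s,\tau_i\times\chi)$, so non-vanishing at $s=\tfrac12$ of the partial products transfers to every packet member. The alternative route you sketch via $\pi\boxplus\sigma$ and Theorems \ref{th-nveven}/\ref{th-nvodd} is the same mechanism used in the last step of the proof of Theorem \ref{th-nveven}, so both variants are consistent with the paper.
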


We are also going to prove Theorem \ref{th-nvug} for $\Fn+\Fm\leq 4$ without the assumption on Conjecture \ref{conj-subr} in Section \ref{sec-n4}.

\section{Non-vanishing Property for $\RU_{2,2}\times \RU_1$}\label{sec-n4}

We are going to prove Theorem \ref{th-U22} with an approach different from those in the previous sections. It is to work directly with $F$-quasisplit unitary
groups. However, for the moment, we only work out for $\Fn\leq 4$.
We first consider the case of $\RU_{2,2}$ and then deduce the other cases by the
theory of endoscopy.
We set the $F$-quasisplit unitary group $G_2^*=\RU_{2,2}=\RU^*_{E/F}(4)=\RU(V^*,q^*)$, with
$$
V^*=\Span\{e_1,e_2\}\oplus \Span\{e_{-2},e_{-1}\}
$$
where $q^*(e_i,e_{-j})=\delta_{ij}$. As proved in \cite[Section 3.1]{JL-Cogdell}, each generic global Arthur packet contains a generic automorphic member.
It is enough to show

\begin{thm}\label{thm:main-U}
For any irreducible generic cuspidal automorphic representation $\pi$ of $\RU_{2,2}(\BA)$, there exists an automorphic character $\chi$ of $\RU_1(\BA)$ such that
$$
L(\frac{1}{2},\pi\times\chi)\ne 0.
$$
\end{thm}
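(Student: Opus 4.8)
The strategy is to reduce the non-vanishing of $L(\frac{1}{2},\pi\times\chi)$ to a statement about Fourier coefficients of $\pi$ along the subregular unipotent orbit of $\RU_{2,2}$, and then to prove the required Fourier coefficient is nonzero by an explicit root-exchange argument. Concretely, for an irreducible generic cuspidal automorphic representation $\pi$ of $\RU_{2,2}(\BA)$, the subregular partition is $\udl{p}_\subr=[31]$ of $4$; the associated datum $(V_{\udl{p}_\subr},\psi_{\CO_{\udl{p}_\subr}})$ produces a Bessel-type Fourier coefficient whose stabilizer group $H^{\CO_{\udl{p}_\subr}}$ is the anisotropic rank-one unitary group attached to the complementary line, i.e.\ a form of $\RU_1$. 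By Theorem~5.5 of \cite{JZ-BF} (one direction of the global Gan--Gross--Prasad conjecture) together with Proposition~7.4 of \cite{JZ-BF} (which guarantees Conjecture~2.3 of \cite{JZ-BF} for the subregular partition, so that the generic summand $\sigma$ is forced to be a character $\chi$ of $\RU_1$), the non-vanishing of $L(\frac{1}{2},\pi\times\chi)$ for some automorphic character $\chi$ of $\RU_1(\BA)$ follows once we know that $\CF^{\CO_{\udl{p}_\subr}}(\pi)\neq 0$. Hence the whole theorem is reduced to Proposition~\ref{prop-u4subr}: every irreducible generic cuspidal $\pi$ on $\RU_{2,2}(\BA)$ has a nonzero Fourier coefficient attached to $\udl{p}_\subr=[31]$.

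\textbf{The core computation.} To prove the latter, I would start from the hypothesis that $\pi$ is generic, meaning it has a nonzero Whittaker--Fourier coefficient attached to the regular partition $\udl{p}_\reg=[4]$. The goal is to descend from $[4]$ to $[31]$. The standard machinery here (as in \cite{J14}, \cite{JL-Cogdell}, \cite{GRS11}) is to relate Fourier coefficients along two partitions that are "adjacent" in the closure ordering via a sequence of \emph{root exchanges} and Fourier expansions: one expands the Whittaker integral along a well-chosen one-parameter (or small) unipotent subgroup, and the nontrivial characters in that expansion, after conjugation by a Weyl element, reassemble into the $[31]$-Fourier coefficient, while the trivial-character term is killed by cuspidality of $\pi$ (it would be a Fourier coefficient along a strictly larger orbit which is not in the wave-front set, or it factors through a proper parabolic along which $\pi$ is cuspidal). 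The delicate bookkeeping is: identify the correct unipotent subgroups $V_{[4]}$ and $V_{[31]}$ inside $\RU_{2,2}$ with respect to the basis $e_1,e_2,e_{-2},e_{-1}$ and the Hermitian form $J_2^4$; track the characters $\psi_{\CO}$ under the root-exchange lemma; and verify that the intermediate "neutral" terms vanish — for $[31]$ this uses that the only unipotent orbits dominating $[31]$ are $[4]$ itself, and that a nonzero $[4]$-coefficient plus cuspidality control the expansion.

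\textbf{Deducing the remaining cases and the expected obstacle.} Once Theorem~\ref{thm:main-U} is in hand, the cases $\RU_{1,1}$ and $\RU_{2,1}$ follow by endoscopy exactly as in Section~\ref{ssec-mnv}: given a generic global Arthur parameter $\phi$ of $\RU_{1,1}$ or $\RU_{2,1}$, enlarge it by an auxiliary simple generic parameter (of $\RU_{1,1}$ or $\RU_1$ respectively, chosen with the right conjugate-duality sign) so that $\phi\boxplus\phi'$ is a generic parameter of $\RU_{2,2}$; apply Theorem~\ref{thm:main-U} to an automorphic member of $\wt{\Pi}_{\phi\boxplus\phi'}(\RU_{2,2})$ together with the factorization $L(\frac12,(\pi\boxplus\sigma)\times\chi)=L(\frac12,\pi\times\chi)\cdot L(\frac12,\sigma\times\chi)$ to conclude. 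I expect the main obstacle to be entirely inside the proof of Proposition~\ref{prop-u4subr}: the $F$-rational structure of the Fourier coefficient attached to $[31]$ is subtle (the stabilizer $H^{\CO_{[31]}}$ depends on an anisotropic line in the Hermitian space, and the orbit $\CO_{[31]}^\st$ splits into several $F$-rational orbits), so the root-exchange argument must be carried out carefully enough to land on the \emph{correct} $F$-rational orbit — a nonzero coefficient for \emph{some} rational form in the stable orbit is what \cite{JZ-BF} requires, and establishing exactly this, rather than some other rational representative, is where the length and technicality of the argument concentrates. This is also precisely why the method does not immediately generalize to higher-rank $\RU_{n,n}$, where the combinatorics of rational orbits in the subregular stable orbit becomes substantially more involved.
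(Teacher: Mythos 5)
You correctly identify the reduction: pass to Proposition~\ref{prop-u4subr} via \cite[Theorem~5.5]{JZ-BF} together with \cite[Proposition~7.4]{JZ-BF}, which forces the generic summand $\sigma$ to be a character of $\RU_1$ for the subregular partition; and you also see how $\RU_{1,1}$ and $\RU_{2,1}$ follow by endoscopy. However, your sketch of the core computation --- the proof that a generic cuspidal $\pi$ on $\RU_{2,2}$ has a nonzero $[31]$-Fourier coefficient --- contains a genuine gap, and the mechanism you propose (``expand the Whittaker integral along a small unipotent, nontrivial characters reassemble into the $[31]$-coefficient, the trivial term dies by cuspidality'') is not what the paper does and would not succeed as stated.

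The problem is precisely the $F$-rational-structure issue you flagged as an ``expected obstacle,'' but you do not resolve it. A direct Fourier expansion of the Whittaker integral only lands you on the $[31]$-coefficient $\CF^{\psi_{[31];v}}$ for an \emph{isotropic} vector $v$ (e.g.\ $v=e_{-2}$ or $e_2$), whose stabilizer is $\GL_1(E)\rtimes E$ rather than $\RU_1$. Proposition~\ref{prop-u4subr} demands the coefficient for an \emph{anisotropic} $v_\alpha=e_2+\alpha e_{-2}$ with $\tr_{E/F}\alpha\neq 0$, and there is no Weyl conjugation taking an isotropic character to an anisotropic one. The paper's actual route is a two-sided argument: first descend from $[4]$ through the degenerate $[31]$-coefficient down to the $[2^2]$-coefficient and conjugate its character to the specific Hermitian form $\vartheta=\diag(1,-1)$ (Lemma~\ref{lm:U-22}); then observe that the stabilizer $M_{[2^2];\vartheta}\cong\RU_{1,1}$, so the $[2^2;\vartheta]$-period is an automorphic function on $\RU_{1,1}(\BA)$, and climb back \emph{up}: show it has a nonzero Whittaker coefficient for some $\alpha\in F^\times$ (Lemma~\ref{lm:2comp22}), and finally identify the composite coefficient with $\CF^{\psi_{[31];v_\alpha}}$ via the root-exchange result \cite[Corollary~7.1]{GRS11} (Lemma~\ref{lm:comp-31}). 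Notably, the key vanishing argument in Lemma~\ref{lm:2comp22} is \emph{not} cuspidality of $\pi$; it is that the Bessel period $\CF^{\psi_{[2^2];\vartheta}}(\varphi)$ is rapidly decreasing on its $\RU_{1,1}$-stabilizer, so it cannot be left-$\SU_{1,1}(\BA)$-invariant (which is what would happen if all nontrivial Whittaker coefficients along $N^{(2)}_+$ vanished). Without the detour through $[2^2;\vartheta]$ and the rapid-decrease argument, your proposed descent cannot produce the anisotropic $[31]$-coefficient that the GGP input actually requires.
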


As indicated in the discussion of Section \ref{sec-unu1}, it is enough to show that any irreducible generic cuspidal automorphic representation $\pi$ of
$\RU_{2,2}(\BA)$ has a non-zero Fourier coefficient associated to the subregular partition $\udl{p}_{\subr}=[31]$. In this situation, the stabilizer
$H^{\CO_{\udl{p}_\subr}}$ is $\RU_{E/F}(1)=\RU_1$, and hence Conjecture 2.3 of \cite{JZ-BF} holds (Proposition 7.4 of \cite{JZ-BF}). Therefore, the following
proposition implies Theorem \ref{thm:main-U}.

\begin{prop}\label{prop-u4subr}
Let $\pi$ be an irreducible cuspidal automorphic representation of $\RU_{2,2}(\BA)$. Assume that $\pi$ is generic, i.e.
it has a non-zero Whittaker-Fourier coefficient. Then $\pi$ has a nonzero Fourier coefficient associated to the
subregular partition $\udl{p}_{\subr}=[31]$.
\end{prop}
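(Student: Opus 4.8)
The plan is to climb down the closure ordering of unipotent orbits for $\RU_{2,2}$, starting from the regular partition $[4]$ where $\pi$ has (by hypothesis) a nonzero Whittaker–Fourier coefficient, and land on the subregular partition $[31]$. The relevant partitions of $4$ (for type $A_3$ on the unitary group $\RU_{2,2}$, which has $F$-rank $2$) are $[4]\succ[31]\succ[2^2]\succ[21^2]\succ[1^4]$; the key observation is that $[4]$ and $[31]$ are \emph{not} comparable in the sense needed, so one cannot directly invoke the standard ``root-exchange'' lemmas of Ginzburg–Rallis–Soudry to pass from $[4]$ to $[31]$. Instead I would run the argument through an intermediate Fourier–Jacobi-type coefficient: use the relation, established in \cite{J14} and \cite{JL-Cogdell} between Fourier coefficients attached to $[4]$ and those attached to $[2^2]$ (via an explicit chain of root exchanges and a Fourier expansion along a one-dimensional unipotent), and separately relate $[31]$ to $[2^2]$. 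Concretely, the nonvanishing of the $[4]$-coefficient forces, by a standard ``exchanging roots'' argument plus an abelian Fourier expansion, nonvanishing of a degenerate coefficient sitting between $[4]$ and $[2^2]$, and then a second Fourier expansion (along the direction that distinguishes $[31]$ from $[2^2]$) together with the cuspidality of $\pi$ shows that one of the resulting pieces—the one attached to the subregular orbit $[31]$—must be nonzero.

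The detailed mechanics are the following. First I would set up the explicit unipotent subgroup $V_{[4]}$ and its character $\psi_{\CO_{[4]}}$ as in Section \ref{ssec-fcpt}, identify the Whittaker datum with the regular partition, and then fix the unipotent $V_{[31]}$ with its character $\psi_{\CO_{[31]}}$; here $H^{\CO_{[31]}}\cong\RU_1$ as noted. Second, I would write down the two ``neighboring'' unipotent groups whose quotients by the subgroup where the characters already agree are abelian (these are the root subgroups that get exchanged), apply the root-exchange lemma (\cite[Lemma]{GRS11}-style, as packaged in \cite{J14}) to move the nonvanishing of the $[4]$-coefficient across. Third, on the resulting integral I would perform a Fourier expansion in the remaining one-parameter direction; the constant term in this expansion is killed by cuspidality of $\pi$ (this is where cuspidality, rather than mere discreteness, is genuinely used), so a nontrivial character contributes, and by conjugating under the Levi (an $\RU_1\times\GL_1$ or torus action on the characters) one reaches precisely the generic character $\psi_{\CO_{[31]}}$ in its orbit. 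The nonvanishing of the resulting $[31]$-coefficient is then the assertion.

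The main obstacle I expect is the $F$-rational structure of the nilpotent orbit $[31]$ and its character: the $F$-stable orbit $\CO^\st_{[31]}$ may split into several $F$-rational orbits, and the root-exchange/Fourier-expansion bookkeeping must track which $F$-rational character one lands on and ensure it is in the correct Levi-orbit to be identified with $\psi_{\CO_{[31]}}$ (this is exactly the ``complication of the $F$-rational structure'' the authors flag in the introduction as the reason the argument does not extend to higher rank). A secondary technical point is verifying that all the intermediate degenerate integrals converge and that the interchange of integration and summation in the Fourier expansions is legitimate—standard for cusp forms but requiring care. If the rational-structure analysis is handled (e.g.\ by choosing the anisotropic kernel data of $V^*$ appropriately and using that $H^{\CO_{[31]}}=\RU_1$ is a torus form, so its Galois cohomology is tractable), the rest is a finite, if lengthy, computation with unipotent subgroups of $\RU_{2,2}$, and Theorem \ref{thm:main-U} follows via \cite[Theorem 5.5]{JZ-BF} and Proposition 7.4 of \cite{JZ-BF}.
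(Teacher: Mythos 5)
Your overall route is the same as the paper's: pass from the Whittaker ($[4]$) coefficient to the nondegenerate $[2^2]$-coefficient $\CF^{\psi_{[2^2];\vartheta}}$ with $\vartheta=\diag(1,-1)$, observe that its stabilizer in the Siegel Levi is $\RU_{1,1}$, and then use the root-exchange machinery (GRS Corollary 7.1) to convert a composed coefficient (a $[2^2]$-coefficient followed by a Whittaker coefficient on $\RU_{1,1}(\BA)$ with a nonzero $\alpha$) into the genuine $[31]$-coefficient $\CF^{\psi_{[31];v_\alpha}}$ with $v_\alpha=e_2+\alpha e_{-2}$ anisotropic. You also correctly flag the $F$-rational orbit issue.

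However, there is a genuine gap in the crucial middle step. You claim that, after reaching $\CF^{\psi_{[2^2];\vartheta}}(\varphi)$ and Fourier-expanding along the one-parameter unipotent $N^{(2)}_+\subset\RU_{1,1}$, "the constant term in this expansion is killed by cuspidality of $\pi$." This is false. The constant term there is the integral of $\varphi$ over $N^{(2)}_+N_{\hat{2}}$ against a character that is \emph{nontrivial} on $N_{\hat{2}}$; this unipotent group is not the unipotent radical of any parabolic of $\RU_{2,2}$, so cuspidality of $\varphi$ gives you nothing. In fact, by the same root-exchange identity applied with $\alpha=0$, this constant term is (up to nonvanishing equivalence) the degenerate $[31]$-type coefficient $\CF^{\psi_{[31];e_2}}(\varphi)$ attached to the \emph{isotropic} vector $e_2$ — and the paper's Lemma \ref{lm:U-22} shows this is actually \emph{nonzero} for generic $\pi$. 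The correct mechanism (the paper's Lemma \ref{lm:2comp22}) is different: $\CF^{\psi_{[2^2];\vartheta}}(\varphi)$ is rapidly decreasing on $\RU_{1,1}(\BA)$ because $\varphi$ is a cusp form on $\RU_{2,2}(\BA)$; if all nontrivial Whittaker coefficients on $\RU_{1,1}$ vanished, the function would be $N^{(2)}_+(\BA)$-invariant, hence (combining with $\RU_{1,1}(F)$-invariance, which contains a representative of the nontrivial Weyl element) $\SU_{1,1}(\BA)$-invariant, hence proportional to $\omega_\pi$ — which is incompatible with rapid decrease. So cuspidality enters through rapid decrease and a non-constancy argument, not by killing a constant term. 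Without this replacement, your outline does not close.
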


The proof of Proposition \ref{prop-u4subr} takes place in the following section. Before we go to there, we would like to note that the following
conjectural generalization of Proposition \ref{prop-u4subr} should be expected.
If this is the case, then we are able to extend Theorem \ref{thm:main-U} to general
quasi-split unitary group $\RU_{n,n}$.

\begin{conj}\label{conj-subr2n}
Let $\pi$ be an irreducible cuspidal automorphic representation of $\RU_{n,n}(\BA)$. Assume that $\pi$ is generic, i.e.
it has a non-zero Whittaker-Fourier coefficient. Then $\pi$ has a nonzero Fourier coefficient associated to the
subregular partition $$\udl{p}_{\subr}=[(2n-1)1].$$
\end{conj}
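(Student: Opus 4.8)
The plan is to reduce the statement, exactly as in the $\RU_{2,2}$ case treated in Proposition \ref{prop-u4subr}, to an explicit analysis of Fourier coefficients attached to unipotent orbits in the wave-front set of a generic cuspidal automorphic representation $\pi$ of $\RU_{n,n}(\BA)$. Since $\pi$ is generic, it has a nonzero Whittaker--Fourier coefficient, which is the Fourier coefficient attached to the regular partition $\udl{p}_{\reg}=[2n]$. The goal is to descend one step, from $[2n]$ to $[(2n-1)1]$, and show the coefficient remains nonzero. First I would fix the standard realization of $\RU_{n,n}=\RU(V^*,q^*)$ with $V^*=\Span\{e_1,\dots,e_n\}\oplus\Span\{e_{-n},\dots,e_{-1}\}$ and the Borel containing the maximal $F$-split torus as in Section \ref{ssec-ug}, and write down explicitly the unipotent subgroup $V_{\udl{p}_{\subr}}$ and the generic character $\psi_{\CO_{\udl{p}_{\subr}}}$ associated to a chosen $F$-rational orbit in $\CO^{\st}_{[(2n-1)1]}$; the stabilizer $H^{\CO_{\udl{p}_{\subr}}}$ is $\RU_1$, as already noted.

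The core of the argument is a root-exchange (or ``exchanging roots'') computation in the style of Ginzburg--Rallis--Soudry and of \cite{J14}. I would compare the Whittaker coefficient of $\pi$ with the $[(2n-1)1]$-coefficient by realizing both as iterated integrals over nested unipotent subgroups and passing between them via a sequence of Fourier expansions along one-parameter subgroups, at each stage using cuspidality of $\pi$ to kill the constant term and to control which characters survive. Concretely: starting from the nonzero Whittaker coefficient, expand along the appropriate unipotent directions so as to ``open up'' the last simple-root datum; the non-constant terms in the expansion are parametrized by $F$-points and yield, after conjugation, either the desired $[(2n-1)1]$-coefficient or Fourier coefficients attached to strictly larger orbits — but $[2n]$ is already the largest, and the intermediate orbits that could occur must be checked against the wave-front set (a single $\GL_1$-chain of exchanges separates $[2n]$ from $[(2n-1)1]$ in type $C_n$/$BC_n$, so the combinatorics is controlled). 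The key input making this work for all $n$, and not merely $n=2$, is a uniform description of the $F$-rational structure of these unipotent data and of the relevant orbit closure relations, which is precisely the obstruction the authors flag in the discussion after Theorem \ref{th-U22}.

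Accordingly, the main obstacle — and the reason this is stated as a conjecture rather than a theorem — is controlling the $F$-rational structure of the Fourier coefficients associated to the unipotent orbits lying strictly between the regular and subregular orbits: over $\overline{F}$ the descent is routine, but over $F$ one must rule out, or reinterpret, contributions from $F$-rational orbits inside $F$-stable orbits that are not fully split, and the bookkeeping of anisotropic kernels appearing in the Levi data of the intermediate steps grows with $n$. For $n=2$ this is Proposition \ref{prop-u4subr}, proved by an explicit finite calculation; for general $n$ one would need either a conceptual vanishing statement for the ``wrong'' intermediate coefficients (e.g.\ via a dimension/wave-front-set argument guaranteeing that only $[2n]$ and $[(2n-1)1]$ contribute) or an inductive scheme reducing the rank-$n$ case to smaller unitary groups. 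Once the intermediate vanishing is in hand, the nonvanishing of the $[(2n-1)1]$-coefficient follows from that of the Whittaker coefficient by the chain of exchanges, completing the proof.
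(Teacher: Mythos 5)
The statement you are addressing is \emph{Conjecture} \ref{conj-subr2n}; the paper leaves it open and only proves the $n=2$ case (Proposition \ref{prop-u4subr}). Your ``proposal'' is, by your own account, not a proof: you correctly identify the same obstruction the authors name (the $F$-rational structure of Fourier coefficients attached to the orbits between regular and subregular grows in complexity with $n$), and you leave the argument conditional on an ``intermediate vanishing'' that you do not establish. So there is no gap to locate beyond the one you flag yourself, and there is no proof in the paper to compare against.

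That said, your sketch understates the structural content of the $n=2$ argument, and that understatement matters when one thinks about generalizing. You frame the passage from the Whittaker coefficient ($[2n]$) to the subregular coefficient ($[(2n-1)1]$) as a single direct chain of root exchanges. The actual $n=2$ proof is not a direct descent. It first passes from the Whittaker coefficient to a $[31]$-type coefficient with an \emph{isotropic} vector $e_{-2}$ (Lemma \ref{lm:U-22}), which is not the subregular Bessel coefficient (whose datum must be anisotropic); it then Fourier-expands \emph{down} to a specific $[2^2]$-coefficient $\CF^{\psi_{[2^2];\vartheta}}$ after observing that $\GL_2(E)$-congruence is transitive on rank-two Hermitian matrices of fixed discriminant; it next uses that the stabilizer of $\psi_{[2^2];\vartheta}$ is the rank-one group $\RU_{1,1}$ together with a rapid-decrease argument to obtain a nonzero Whittaker coefficient on that stabilizer (Lemma \ref{lm:2comp22}); and only then does a single application of the Ginzburg--Rallis--Soudry exchange-of-roots lemma (Lemma \ref{lm:comp-31}) convert the composite coefficient back up to the \emph{anisotropic} $[31]$-coefficient. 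Each of these features --- the isotropic/anisotropic distinction for the $[31]$-datum, the pivot through a low-rank $[2^2]$-stabilizer, the congruence argument on Hermitian forms, the rapid-decrease argument on a rank-one group --- is special to $n=2$, and replacing them for $n>2$ would involve higher-rank stabilizers, a non-transitive congruence action, and a longer rank-dependent chain of expansions whose $F$-rational bookkeeping is exactly the open issue. Your phrase ``a single $\GL_1$-chain of exchanges'' glosses over this. The honest acknowledgment at the end of your proposal is the correct assessment; the preceding outline should be revised to reflect that even the $n=2$ route is not a one-step descent, so the obstruction is not merely combinatorial bookkeeping but the absence of the right intermediate pivot for general $n$.
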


However, due to the limitation of the current understanding of the $F$-rational structure of the Fourier coefficients of automorphic representations,
we are only able to prove the case when $n=2$ for this moment, which is given below.

\subsection{Proof of Proposition \ref{prop-u4subr}}\label{sec-proofu4}
For the group $\RU_{2,2}$, all stable nilpotent orbits are parametrized by the following partitions $\underline{p}$,
\begin{equation}\label{eq:p-4}
[4],\qquad [31],\qquad[2^2],\qquad [21^2],\qquad [1^4],
\end{equation}
respectively, where the power indicates the multiplicity of the parts in the partitions.
Remark that those orbits are listed in the topological order. According to \cite{Li92}, cuspidal automorphic forms are nonsingular. Hence
every cuspidal automorphic representation has a nonzero Fourier coefficient associated to the partition $[2^2]$, and we only consider the
following three partitions
$$
[4],\qquad [31],\qquad[2^2].
$$

Recall the notation in Section \ref{ssec-ug}.
Let $P_0=TN_0$ be the standard Borel subgroup  (the fixed minimal parabolic subgroup) of $\RU_{2,2}$ consisting of all upper triangular matrices,
where $T:=\Res_{E/F}S$ for short.
Let $P_{\hat{i}}=M_{\hat{i}}N_{\hat{i}}$ with $i\in \{1,2\}$ be the standard maximal parabolic subgroup of $\RU_{2,2}$
whose Levi subgroup $M_{\hat{i}}$ is isomorphic to $$G_{E/F}(i)\times \RU_{E/F}(4-2i).$$
Then $N_{\hat{i}}$ is the unipotent subgroup of the nilpotent orbit associated with the partition $[(4-i)i]$.

For a nilpotent orbit $\CO_{\underline{p}}$ associated with the partition $\underline{p}=[31]$, we have that $V_{\CO_{\underline{p}}}=N_{\hat{1}}$. It is easy to check that
$V_{\CO_{\underline{p}}}(F)/[V_{\CO_{\underline{p}}}(F),V_{\CO_{\underline{p}}}(F)]$ is isomorphic to
\begin{equation}\label{eq:p-1}
\Span_E\{e_{2},e_{-2}\}.	
\end{equation}
We still denote by $q^*$ the restriction of $q^*$ to this subspace.
Fix an additive character $\psi_F$ of $F\bks \BA$.
For each vector $v$ in \eqref{eq:p-1}, define the character $\psi_{\underline{p};v}$ by
\begin{equation}\label{eq:psi-3-1}
\psi_{[31];v}(\begin{pmatrix}
1&x&x''\\
&I&x'\\
&&1	
\end{pmatrix})=\psi_{F}(\frac{1}{2}\tr_{E/F}q^*(x,v)),	
\end{equation}
where $x'$ is determined by $x$, as the unipotent elements are in $\RU_{2,2}$.
For example,
for $\alpha\in E$, take
\begin{equation}
v_\alpha=e_2+\alpha e_{-2}.
\end{equation}
The character $\psi_{[31];v_\alpha}$ of $N_{\hat{1}}(F)\bks N_{\hat{1}}(\BA)$ is given by
\begin{align*}
 \psi_{[31];v_\alpha}(\begin{pmatrix}
1&u&x_{1,1}&x_{1,2}\\
&1&0&-\iota(x)_{1,1}\\
&&1&-\iota(u)\\
&&&1	
\end{pmatrix})= &\psi_F(\frac{1}{2}\tr_{E/F}(q^*(ue_2+x_{1,1}e_{-2},e_2+\alpha e_{-2})))\\
=&\psi_F(\frac{1}{2}\tr_{E/F}(\iota(\alpha) u+x_{1,1})),
\end{align*}
where
the entries $u$, $x_{1,1}$ and $x_{1,2}$ are in $\BA_E$.
By
\begin{equation}\label{eq:length-v}
q^*(v_{\alpha},v_{\alpha})=\alpha+\iota(\alpha),
\end{equation}
$v_{\alpha}$ is anisotropic when $\tr_{E/F}\alpha\ne 0$.
By the adjoint action of $M_{\hat{1}}(F)\cong \GL_1(E)\times \RU_{1,1}(F)$ on $\psi_{[31];v_\alpha}$,
its stabilizer $M_{[31];v_\alpha}(F)$ is isomorphic to $\RU_1(F)$ if $\tr_{E/F}\alpha\ne 0$,
and isomorphic to $\GL_1(E)\rtimes E$ if $\tr_{E/F}\alpha=0$.
By definition, the $[31]$-type Fourier coefficient $\CF_{[31];v_\alpha}$ is given by such an $\alpha$ that $\tr_{E/F}\alpha\ne 0$.

For a nilpotent orbit $\CO_{\underline{p}}$ associated with the partition $\underline{p}=[2^2]$, the dual group of
$V_{\CO_{\underline{p}}}(F)/[V_{\CO_{\underline{p}}}(F),V_{\CO_{\underline{p}}}(F)]$ is isomorphic to
\begin{equation}\label{eq:p-2-2}
\{X\in M_{2\times 2}(E)\colon \iota(X)+w_2X^tw_2=0\},
\end{equation}
where $w_2$ is the $2\times 2$ anti-diagonal matrix, i.e., $w_2=J^2_2$ in \eqref{eq:J}.
For each $X$ in the set \eqref{eq:p-2-2},
define the character
\begin{equation}\label{eq:psi-2-2}
\psi_{\underline{p};X}(n)=\psi_{F}(\frac{1}{2}\tr(XA))	
\end{equation}
where $n\in N_{\hat{2}}$ is of form
$\ppair{\begin{smallmatrix}
I_2&A\\ 0&I_2
\end{smallmatrix}}$.
Note that $\tr(XA)=\iota(\tr(XA))$ and all characters of $V_{\CO_{\underline{p}}}(\BA)$,
which are trivial on $V_{\CO_{\underline{p}}}(F)$,  are defined this way.

With the preparation above, we are ready to prove Proposition \ref{prop-u4subr}. To avoid the confusion of notation,
we use $\BA=\BA_F$ for the ring of adeles of $F$, and $\BA_E$ for the ring of adeles of $E$. For any closed algebraic subgroup $X$ of $\RU_{2,2}$,
$X(\BA)$ is the $\BA$-points of $X$. However, when the elements of $X$ is written as matrices for the calculation, we will specify if
the matrix elements belong to $\BA_F$ or $\BA_E$. Note that the explicit calculation may involve certain non-algebraic subgroup of $\RU_{2,2}$. In such a
circumstance, we have to make the matrix elements of such a subgroup belong to $\BA_F$ or $\BA_E$.

The idea to prove Proposition \ref{prop-u4subr} can be briefly outlined as follows. By \cite{Li92}, every cuspidal automorphic representation $\pi$ of
$\RU_{2,2}(\BA)$ has a non-zero Fourier coefficients associated with the partition $[2^2]$. The first point here is to show that if $\pi$ is generic,
i.e. has a non-zero Fourier coefficient associated with the partition $[4]$ (the Whittaker-Fourier coefficient), then $\pi$ must have
a non-zero Fourier coefficients associated to the partition $[2^2]$ with $X=\vartheta:=\diag(1,-1)$ (Lemma \ref{lm:U-22}). Then the key structure to
make our current argument work for $\RU_{2,2}$ is that the stabilizer of the character $\psi_{[2^2];\vartheta}$ is isomorphic to $\RU_{1,1}$, which leads to
Lemma \ref{lm:2comp22} with a further Fourier expansion. The last step is Lemma \ref{lm:comp-31} that determines structure of the composition of two
consecutive Fourier coefficients, which is a very technical issue in general. However, we resolve this issue for the current case by reducing it
to \cite[Corollary 7.1]{GRS11}. This proves Proposition \ref{prop-u4subr}.

First, we investigate the Fourier coefficients associated with the partition $[2^2]$.
Take $X=\vartheta=\diag(1,-1)$.
One can define the character $\psi_{[2^2];\vartheta}$ as in \eqref{eq:psi-2-2}, i.e.,
$$
\psi_{[2^2];\vartheta}(\begin{pmatrix}
1&0&x_{1,1}+\sqrt{\varsigma}y_{1,1}&\sqrt{\varsigma}y_{1,2}\\
&1&\sqrt{\varsigma}y_{2,1}&-x_{1,1}+\sqrt{\varsigma}y_{1,1}\\
&&1&0\\
&&&1	
\end{pmatrix} )=\psi_F(x_{1,1}),
$$
where $x_{1,1}$ and $y_{i,j}$ are in $\BA_F$.
Then the stabilizer $M_{\underline{p};\vartheta}$ is
$$
\cpair{
	m(g):=\begin{pmatrix}
		g& \\ &\wh{\iota(g)}^{-1}
	\end{pmatrix}\in G^*_2\colon
	g^t  \begin{pmatrix}
	 &1 \\ -1&	
	\end{pmatrix}
	\iota(g)=\begin{pmatrix}
	 &1 \\ -1&	
	\end{pmatrix}
},
$$
where $\hat{g}:=w_2g^{t}w_2$.
We may identify $M_{[2^2];\vartheta}$ as $\RU_{1,1}$,
preserving the  Hermitian matrix
$$
\begin{pmatrix}
&\sqrt{\varsigma}\\ -\sqrt{\varsigma}&	
\end{pmatrix}.
$$
The Lie algebra of $M_{[2^2];\vartheta}$ consists of elements
\begin{equation}\label{eq:U(1,1)}
\begin{pmatrix}
a+\sqrt{\varsigma}d&b\\ c&-a+\sqrt{\varsigma}d	
\end{pmatrix}.
\end{equation}
Remark that in general, for $X$ in \eqref{eq:p-2-2} with $\det(X)\ne 0$, the stabilizer $M_{[2^2];\vartheta}$ is isomorphic to a
pure inner form of  $\RU_{1,1}$.

\begin{lem}\label{lm:U-22}
Any irreducible generic cuspidal automorphic representation $\pi$ of $\RU_{2,2}(\BA)$ has a nonzero Fourier coefficient
$\CF^{\psi_{[2^2];\vartheta}}(\pi)$.
\end{lem}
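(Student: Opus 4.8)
The plan is to deduce the non-vanishing of $\CF^{\psi_{[2^2];\vartheta}}(\pi)$ from the non-vanishing of the Whittaker--Fourier coefficient of $\pi$ (which holds because $\pi$ is generic) by a Fourier expansion combined with a root-exchange argument in the spirit of \cite[Chapter 7]{GRS11}. First I would fix $\varphi$ in the space of $\pi$ whose Whittaker--Fourier coefficient $\CF^{\psi_{[4]}}(\varphi)$, attached to the regular partition $[4]$ and to the Borel subgroup $P_0=TN_0$, is nonzero. Write $N_0=U_{\alpha_1}\ltimes N_{\hat{2}}$, where $N_{\hat{2}}$ is the abelian Siegel unipotent radical of $P_{\hat{2}}$ and $U_{\alpha_1}$ is the root subgroup of the short simple root $\alpha_1$ generating the $\GL_2(E)$-factor of the Levi $M_{\hat{2}}$ (so $U_{\alpha_1}$ is an $E$-line, two-dimensional over $F$). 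A direct computation shows that the restriction of $\psi_{[4]}$ to $N_{\hat{2}}$ equals $\psi_{[2^2];X_1}$, with $X_1$ the rank-one matrix supported on the long simple root subgroup, while the remaining datum of $\psi_{[4]}$ is the generic character of $U_{\alpha_1}$. Hence
\[
\CF^{\psi_{[4]}}(\varphi)(g)=\int_{[U_{\alpha_1}]}\int_{[N_{\hat{2}}]}\varphi(n u g)\,\psi_{[2^2];X_1}(n)^{-1}\,\psi_{[4]}(u)^{-1}\,dn\,du ,
\]
which exhibits $\CF^{\psi_{[4]}}$ as a further integral against the degenerate Fourier coefficient $\CF^{\psi_{[2^2];X_1}}$.

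The heart of the matter is to exchange roots so as to transport the nontrivial part of the character from the long root subgroup of $N_{\hat{2}}$ onto the short root subgroup $U_{\alpha_1+\alpha_2}\subset N_{\hat{2}}$, using $U_{\alpha_1}$ as a pivot (one has $[U_{\alpha_1},U_{\alpha_2}]=U_{\alpha_1+\alpha_2}$ and $[U_{\alpha_1},U_{\alpha_1+\alpha_2}]=U_{2\alpha_1+\alpha_2}$). Using the automorphy of $\varphi$, this would rewrite the combination ``degenerate datum $X_1$ on $N_{\hat{2}}$ together with the outer $U_{\alpha_1}$-integration'' as a Fourier coefficient attached to a character $\psi_{[2^2];X_0}$ of $N_{\hat{2}}$ which is nontrivial on the short root subgroup and trivial on the two long root subgroups, up to one extra integration over a unipotent subgroup of the stabilizer $M_{[2^2];X_0}$. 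The point is then one of rational structure: any $X_0$ in \eqref{eq:p-2-2} supported on the short root subgroup alone automatically has $\det X_0\in-\N_{E/F}(E^\times)$, so it defines a split Hermitian plane and is $M_{\hat{2}}(F)=\GL_2(E)$-conjugate to $\vartheta=\diag(1,-1)$. Since conjugation by a rational $m(\gamma)$, $\gamma\in\GL_2(E)$, carries $\CF^{\psi_{[2^2];X_0}}(\varphi)$ to $\CF^{\psi_{[2^2];\vartheta}}(m(\gamma)\cdot\varphi)$ by a change of variables, non-vanishing of the first gives that of the second. Putting the two steps together, $\CF^{\psi_{[4]}}(\pi)\neq 0$ forces $\CF^{\psi_{[2^2];\vartheta}}(\pi)\neq 0$.

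I expect the main obstacle to be the execution of the exchange of roots in this unitary setting: because $U_{\alpha_1}$ is an $E$-line while $U_{\alpha_2}$ is only one-dimensional over $F$, the relevant commutator pairings among these root subgroups are not perfect dualities, so the exchange has to be organized over judiciously chosen subgroups that are one-dimensional over $F$, and the additive character must be tracked through each step in order to certify that the output is exactly of the ``nontrivial short root, trivial long roots'' shape, hence conjugate to the split $\vartheta$ rather than to an anisotropic form. This rational-structure bookkeeping is precisely the difficulty that, at present, obstructs the analogous statement for $\RU_{n,n}$ with $n\geq 3$ (Conjecture \ref{conj-subr2n}).
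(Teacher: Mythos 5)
Your plan diverges from the paper's proof at a structurally important point, and the step you flag as an ``obstacle'' is in fact a genuine gap that the paper avoids by a different route.

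You decompose $N_0=U_{\alpha_1}\ltimes N_{\hat 2}$ and observe that $\psi_{[4]}|_{N_{\hat 2}}$ is a rank-one character $\psi_{[2^2];X_1}$ supported on the long root $\alpha_2$, so $\det X_1=0$. You then propose a root exchange, with $U_{\alpha_1}$ as pivot, to transport this to a character $\psi_{[2^2];X_0}$ supported on the short root $\alpha_1+\alpha_2$, with $\det X_0\neq 0$. But this is not a root exchange in the sense of \cite[Cor.~7.1]{GRS11}: the exchange of $U_{\alpha_1}$ against $U_{\alpha_1+\alpha_2}$ would have to be governed by the pairing $\psi_C\bigl([U_{\alpha_1},U_{\alpha_1+\alpha_2}]\bigr)\subset\psi_C(U_{2\alpha_1+\alpha_2})$, and the Whittaker character is trivial on $U_{2\alpha_1+\alpha_2}$, so this pairing is identically trivial (not a perfect duality). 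Splitting $U_{\alpha_1}$ into $F$-lines, as you suggest, does not repair this, since the obstruction is the vanishing of the character on the would-be pairing target, not a dimension count. Structurally, no root exchange can convert a rank-one $[2^2]$-datum into a rank-two one, since root exchanges produce equalities (or equivalences of non-vanishing) between integrals over different unipotent domains without changing the underlying nilpotent data in this way.

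The paper circumvents precisely this by pivoting through the $[31]$-coefficient rather than directly through $N_{\hat 2}$: it first writes the Whittaker coefficient via $N_0=N_{\hat 1}\rtimes N^{(1)}$ to deduce $\CF^{\psi_{[31];e_{-2}}}\neq 0$, then uses the transitive $M_{\hat 1}(F)$-action on nonzero isotropic vectors in $\Span_E\{e_2,e_{-2}\}$ to replace $e_{-2}$ by $e_2$. This Witt-type conjugation is what moves the nontrivial character from the $\alpha_1$-direction to the $\alpha_1+\alpha_2$-direction. Decomposing $N_{\hat 1}=N_2'\rtimes N^{(2)}$ and then Fourier-expanding along $N^{(1)}$ (the $\alpha_2$-direction) produces $\sum_\alpha\CF^{\psi_{[2^2];X_\alpha}}$ with $X_\alpha=\begin{pmatrix}1&\alpha\sqrt\varsigma^{-1}\\0&-1\end{pmatrix}$; crucially, the $2\alpha_1+\alpha_2$ entry remains zero throughout, so $\det X_\alpha=-1$ for every $\alpha$, giving the conjugacy to $\vartheta$. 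Your final observation about $\det X_0\in -\mathrm{N}_{E/F}(E^\times)$ for short-root-supported $X_0$ is correct, and the Levi-conjugation step to reach $\vartheta$ is sound, but you have no valid mechanism to arrive at such an $X_0$ from the Whittaker data. The $[31]$-pivot via the isotropic vector is the missing idea.
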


\begin{proof}
Since $\pi$ is generic, any automorphic form $\varphi$ in the space of $\pi$ has a nonzero Whittaker-Fourier coefficient, i.e. a Fourier coefficient of
type $\CF^{\psi_{[4]}}(\varphi)$.

First we show that $\CF^{\psi_{[31];v}}(\varphi)$ is nonzero for  isotropic vectors $v$.
Note that the Whittaker character $\psi_{[4]}$ can be given by
$$
\psi_{[4]}(\begin{pmatrix}
1&u&x_{1,1}&x_{1,2}\\
&1&\sqrt{\varsigma}x_{2,1}&x_{2,2}\\
&&1&-\iota(u)\\
&&&1	
\end{pmatrix})=\psi_F(\frac{1}{2}\tr_{E/F}(u)+\alpha x_{2,1}),
$$
for some $\alpha\in F^\times$,
where $x_{2,1}\in \BA_F$ and $u\in \BA_E$.
The Whittaker-Fourier coefficient is given by
$$
\CF^{\psi_{[4]}}(\varphi)(g):=\int_{N_0(F)\bks N_0(\BA)}\varphi(ug)\psi^{-1}_{[4]}(u)\ud u,
$$
where $N_0$ is the unipotent radical of the fixed Borel subgroup $P_0$ of $\RU_{2,2}$.

Denote $N^{(i)}=M_{\hat{i}}\cap N_0$ for $i\in\{1,2\}$, which is the maximal unipotent subgroup of the Levi subgroup $M_{\hat{i}}$.
Since $N_0=N_{\hat{1}}\rtimes N^{(1)}$ and $N^{(1)}$ normalizes the isotropic vector $e_{-2}$,
we may rewrite $\CF^{\psi_{[4]}}(\varphi)$ as the following double integrals
\begin{align*}
\CF^{\psi_{[4]}}(\varphi)(g)=&\int_{N^{(1)}(F)\bks N^{(1)}(\BA)}\psi_{[4]}\vert_{N^{(1)}}(n_2)^{-1}\\
&\ \ \ \ \ \ \ \int_{N_{\hat{1}}(F)\bks N_{\hat{1}}(\BA)}\varphi(n_1 n_2g)\psi_{[31];e_{-2}}(n_1)^{-1}\ud n_1\ud n_2\\
=&\int_{N^{(1)}(F)\bks N^{(1)}(\BA)}\psi_{[4]}\vert_{N^{(1)}}(n_2)^{-1}
\CF^{\psi_{[31];e_{-2}}}(\varphi)(n_2 g)\ud n_2,
\end{align*}
where the character $\psi_{[4]}\vert_{N^{(1)}}$ is the restriction of $\psi_{[4]}$ to the subgroup $N^{(1)}$
and the character $\psi_{[31];e_{-2}}$ is defined in \eqref{eq:psi-3-1}.
It follows that the integrand $\CF^{\psi_{[31];e_{-2}}}(\varphi)(g)$ is not identically zero.

In order to finish the proof of the lemma, we take $v=e_{2}$ in \eqref{eq:psi-3-1}.
Because $q^*(e_{2},e_{2})=0$, $e_{2}$ is isotropic.
Recall that $M_{\hat{1}}(F)$ acts transitively on the set of nonzero isotropic vectors in $\Span_E\{e_2,e_{-2}\}$, which gives the characters of $N_{\hat{1}}(F)\bks N_{\hat{1}}(\BA)$.
Thus
$$
\CF^{\psi_{[31];e_{-2}}}(\pi)\ne 0 \text{ iff }\CF^{\psi_{[31];e_{2}}}(\pi)\ne 0.
$$
More precisely, we have
$$
\psi_{[31];e_{2}}(\begin{pmatrix}
1&u&x_{1,1}&x_{1,2}\\
&1&0&-\iota({x}_{1,1})\\
&&1&-\iota({u})\\
&&&1	
\end{pmatrix} )=\psi_{F}(\frac{1}{2}\tr_{E/F}(x_{1,1})).
$$
Set $N'_2:=N_{\hat{1}}\cap N_{\hat{2}}$, which is the subgroup of $N_{\hat{2}}$ consisting of elements
\begin{equation}\label{eq:N'-2}
\begin{pmatrix}
1&0&x_{1,1}&x_{1,2}\\
&1&0&-\iota({x}_{1,1})\\
&&1&0\\
&&&1	
\end{pmatrix},	
\end{equation}
where $x_{1,2}$ satisfies $\iota({x}_{1,2})+x_{1,2}=0$.
Similarly, as $N_{\hat{1}}=N'_2\rtimes N^{(2)}$ and $N^{(2)}$ normalizes the character $\psi_{[31];e_{2}}\vert_{N'_2}$, we have
\begin{align*}
 \CF^{\psi_{[31];e_{2}}}(\varphi)=& \int_{N^{(2)}(F)\bks N^{(2)}(\BA)} \psi_{[31];e_{2}}\vert_{N^{(2)}}(n_2)^{-1}\\
 &\ \ \ \ \ \ \ \ \int_{N'_2(F)\bks N'_2(\BA)}\varphi(n_1 n_2 g)\psi_{[31];e_{2}}\vert_{N'_2}(n_1)^{-1}
 \ud n_1\ud n_2.
\end{align*}
Remark that the group $N'_2(\BA)$  of $\BA_F$-points (resp. $N'_2(F)$ of $F$-points) consists of matrices of form \eqref{eq:N'-2} with entry $x_{1,1},x_{1,2}\in \BA_E$ (resp. $x_{1,1},x_{1,2}\in E$).
If $\CF^{\psi_{[31];e_{2}}}(\varphi)\ne 0$, then the inner integral
\begin{equation}
\CF_{N'_2}(\varphi)(g):=\int_{N'_2(F)\bks N'_2(\BA)}\varphi(n_1 g)\psi_{[31];e_{2}}\vert_{N'_2}(n_1)^{-1}
 \ud n_1
\end{equation}
is not identically zero.

Since $N^{(1)}$ commutes with $N'_2$,
applying the Fourier expansion along $N^{(1)}$, we have
\begin{equation}\label{eq:F-N'-2}
\CF_{N'_2}(\varphi)(g)=\sum_{\alpha\in F}
\int_{F\bks \BA}\CF_{N'_2}(\varphi)(u_2(z)g)\psi_F(\frac{\alpha}{2} z)^{-1}\ud z,	
\end{equation}
where
\begin{equation}\label{eq:N-(1)}
N^{(1)}(\BA)=(M_{\hat{1}}\cap N_0)(\BA)=\left\{u_2(z):=\begin{pmatrix}
1&0&0&0\\
&1&\sqrt{\varsigma}z&0\\
&&1&0\\
&&&1	
\end{pmatrix}\colon z\in \BA_F\right\}.	
\end{equation}
Combining the integrals over $z\in N^{(1)}$ and  $N'_2$, Equation \eqref{eq:F-N'-2} may be rewritten as
$$
\CF_{N'_2}(\varphi)(g)=\sum_{\alpha\in F}\CF^{\psi_{[2^2];X_\alpha}}(\varphi)(g),
$$
where the character $\psi_{[2^2];X_\alpha}$ is defined in \eqref{eq:psi-2-2} with
$
X_{\alpha}=\begin{pmatrix}
1&\alpha\sqrt{\varsigma}^{-1}\\ 0&-1	
\end{pmatrix}.
$

In fact, it is an easy calculation to have
$$
\begin{pmatrix}
x_{1,1}&\sqrt{\varsigma}y\\ \sqrt{\varsigma}z&-\iota({x}_{1,1})	
\end{pmatrix}
\begin{pmatrix}
1&\alpha\sqrt{\varsigma}^{-1}\\ 0&-1	
\end{pmatrix}=\begin{pmatrix}
x_{1,1}&\alpha x_{1,1}\sqrt{\varsigma}^{-1}-\sqrt{\varsigma}y\\ \sqrt{\varsigma}z &\alpha z+\iota({x}_{1,1})
\end{pmatrix}
$$
where $y,z\in \BA_F$ and $x_{1,1}\in \BA_E$.
Hence
if $\CF_{N'_2}(\varphi)$ is not zero, then there exists $\alpha\in F$ such that
$$
 \CF^{\psi_{[2^2];X_\alpha}}(\varphi)(g)\ne 0.
$$

Note that the adjoint action of the Levi subgroup $M_{\hat{2}}(F)=\GL_2(E)$ on the unipotent radical $N_{\hat{2}}(F)$
is equivalent to its congruent action on the set of Hermitian matrices.
Consider the action of $M_{\hat{2}}(F)$ on the set of characters $\psi_{[2^2];X_{\alpha}}$.
For $g\in \GL_2(E)$, the action of $g$ on $X_{\alpha}$ is given by
$$
g* X_{\alpha}:=g\cdot X_{\alpha}w_2\cdot \iota({g})^t w_2.
$$
Here $X_{\alpha}w_2$ is a Hermitian matrix with discriminant $\det(X_{\alpha}w_2)=1$.
Since the congruent classes of Hermitian matrices are parametrized by the discriminants,
there exists $g\in \GL_2(E)$ such that $g* X_{\alpha}=\vartheta$ (recall $\vartheta=\diag(1,-1)$).
For instance,
$$
\begin{pmatrix}
1&\frac{\alpha}{2\sqrt{\varsigma}}\\ 0&	1
\end{pmatrix}*X_{\alpha}=\begin{pmatrix}
1&0\\ 0&-1	
\end{pmatrix}.
$$
Hence we obtain that
$$
\CF^{\psi_{[2^2];X_\alpha}}(\pi)\ne 0 \text{ if and only if }\CF^{\psi_{[2^2];\vartheta}}(\pi)\ne 0.
$$
Therefore, we conclude that $\CF^{\psi_{[2^2];\vartheta}}(\pi)\ne 0$. This finishes the proof of the lemma.
\end{proof}

By Lemma \ref{lm:U-22}, $\CF^{\psi_{[2^2];\vartheta}}(\pi)$, the space of all functions $\CF^{\psi_{[2^2];\vartheta}}(\varphi)$ with $\varphi$ running through the
space of $\pi$, is nonzero. The functions $\CF_{[2^2];\vartheta}(\varphi)$ are rapidly decreasing automorphic functions on $\RU_{1,1}(\BA)$, the
stabilizer $M_{[2^2];\vartheta}$ of the character $\psi_{[2^2];\vartheta}$ in $M_{\hat{2}}=G_{E/F}(2)$. We are going to consider the Whittaker-Fourier coefficient of
$\CF_{[2^2];\vartheta}(\varphi)$ on $\RU_{1,1}(\BA)$.

Define $N^{(2)}_+$ to be the maximal unipotent subgroup of $M_{[2^2];\vartheta}$, which consists of elements
$
m(\begin{pmatrix}
1&b\\ &	1
\end{pmatrix} )
$
where $b=\iota(b)$.
For $\alpha\in F$, let $\psi_{[2];\alpha}$ be the character of $N^{(2)}_+(\BA)$ defined by
$$
\psi_{[2];\alpha}(m(\begin{pmatrix}
1& b\\ &1	
\end{pmatrix} ))=\psi_F(\alpha b),
$$
where $b\in \BA_F$.
It is trivial on $N^{(2)}_+(F)$.
Remark that the definition of $M_{[2^2];\vartheta}$ implies that $b\in\BA_F$ (see \eqref{eq:U(1,1)}). Hence this character is well-defined.
Recall that for an automorphic form $\varphi'$ on $M_{[2^2];\vartheta}(\BA)$,
its Whittaker-Fourier coefficient  is defined by
\begin{equation}\label{eq:WC-2}
\CF^{\psi_{[2];\alpha}}(\varphi')(h)=\int_{N^{(2)}_+(F)\bks N^{(2)}_+(\BA)}\varphi'(nh)
\psi_{[2];\alpha}^{-1}(n)\ud n.
\end{equation}
When $\alpha=0$, $\CF^{\psi_{[2];0}}(\varphi')$ is the constant term of $\varphi'$.
Since $N^{(2)}_+$ normalizes the character $\psi_{[2^2];\vartheta}$, we can extend $\psi_{[2^2];\vartheta}$ by $\psi_{[2];\alpha}$ to obtain the character $\psi'$ of $N'=N^{(2)}_+N_{\hat{2}}$.
More precisely,
\begin{equation}\label{eq:psi'}
\psi'( \begin{pmatrix}
1&b&x_{1,1}&x_{1,2}\\
&1&x_{2,1}&x_{2,2}\\
&&1&-b\\
&&&1	
\end{pmatrix} ):=\psi_F(\alpha b+\tr_{E/F}\frac{x_{1,1}}{2}),
\end{equation}
where $b$ is in $\BA_F$, $x_{1,1}\in\BA_E$ and $\alpha\in F$. Since $b\in\BA_F$, $N'(\BA)$ is a non-trivial topological subgroup of $N_{0}(\BA)$.
By composing the Whittaker-Fourier coefficient \eqref{eq:WC-2} with
the $[2^2]$-type Fourier coefficient $\CF^{\psi_{[2^2];\vartheta}}$,
we obtain the following  Fourier coefficient of an automorphic form $\varphi$ of $\RU_{2,2}(\BA)$
\begin{align}
\CF_{[2;\alpha]\circ[2^2;\vartheta]}(\varphi):=&\CF^{\psi_{[2];\alpha}}\circ\CF^{\psi_{[2^2];\vartheta}}(\varphi)\nonumber \\
=&\int_{N'(F)\bks N'(\BA)}\varphi(n)\psi'^{-1}(n)\ud n.\label{eq:composition-U}
\end{align}

\begin{lem}\label{lm:2comp22}
If $\pi$ is a generic cuspidal automorphic representation of $\RU_{2,2}(\BA)$, then
there exists $\alpha\in F^\times$ such that
$\CF_{[2;\alpha]\circ[2^2;\vartheta]}(\pi)$ is not zero.
\end{lem}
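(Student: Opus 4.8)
The plan is to show that the restriction of the Fourier coefficient $\CF^{\psi_{[2^2];\vartheta}}(\varphi)$ to the stabilizer $M_{[2^2];\vartheta}(\BA)\cong\RU_{1,1}(\BA)$ is a \emph{cusp form} on $\RU_{1,1}$, and then to read off the assertion from its Fourier expansion along the (one-dimensional) maximal unipotent subgroup $N^{(2)}_+$ of $\RU_{1,1}$.

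First I would use Lemma \ref{lm:U-22} to fix an automorphic form $\varphi$ in the space of $\pi$ with $f_\varphi:=\CF^{\psi_{[2^2];\vartheta}}(\varphi)\neq 0$, and regard $f_\varphi$ as an automorphic function on $\RU_{1,1}(\BA)=M_{[2^2];\vartheta}(\BA)$. Since $\pi$ is cuspidal, $\varphi$ is rapidly decreasing on Siegel sets of $\RU_{2,2}(\BA)$, and because the quotient $N_{\hat{2}}(F)\bks N_{\hat{2}}(\BA)$ over which $\CF^{\psi_{[2^2];\vartheta}}$ integrates is compact, $f_\varphi$ is rapidly decreasing on Siegel sets of $\RU_{1,1}(\BA)$ as well — this is precisely the remark recorded just before the lemma. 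Now a rapidly decreasing automorphic form on a reductive group is necessarily cuspidal; as $\RU_{1,1}$ has no nontrivial $F$-rational character there is no ``modulo centre'' ambiguity, and concretely the constant term of $f_\varphi$ along $N^{(2)}_+$, the unipotent radical of the Borel $B_{1,1}=T_{1,1}N^{(2)}_+$ of $\RU_{1,1}$, is an automorphic form on the rank-one torus $T_{1,1}\cong\Res_{E/F}\GL_1$, hence a finite combination of Hecke characters times powers of $\log|\cdot|$, which cannot be rapidly decreasing unless it vanishes. Therefore $f_\varphi$ is a cusp form on $\RU_{1,1}(\BA)$, so in particular $\int_{N^{(2)}_+(F)\bks N^{(2)}_+(\BA)}f_\varphi(nh)\,\ud n=0$.

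Next I would expand $f_\varphi$ in its Fourier series along the compact abelian group $N^{(2)}_+(F)\bks N^{(2)}_+(\BA)$, whose unitary characters are exactly the $\psi_{[2];\alpha}$ for $\alpha\in F$. Since $N^{(2)}_+$ normalizes the character $\psi_{[2^2];\vartheta}$, composing the coefficients gives, in the notation of \eqref{eq:psi'}--\eqref{eq:composition-U},
\[
f_\varphi(h)=\sum_{\alpha\in F}\CF^{\psi_{[2];\alpha}}(f_\varphi)(h)=\sum_{\alpha\in F}\CF_{[2;\alpha]\circ[2^2;\vartheta]}(\varphi)(h).
\]
The $\alpha=0$ summand is the constant term of $f_\varphi$ along $N^{(2)}_+$, which has just been shown to vanish; hence $f_\varphi=\sum_{\alpha\in F^\times}\CF_{[2;\alpha]\circ[2^2;\vartheta]}(\varphi)$. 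Since $f_\varphi\neq 0$, some summand with $\alpha\in F^\times$ is nonzero, and therefore $\CF_{[2;\alpha]\circ[2^2;\vartheta]}(\pi)\neq 0$ for that $\alpha$.

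The one step that genuinely needs care is the cuspidality of $f_\varphi$ on $\RU_{1,1}$. The point is that the subgroup $N^{(2)}_+N_{\hat{2}}$ over which the $\alpha=0$ coefficient is an integral is \emph{not} the unipotent radical of any parabolic of $\RU_{2,2}$ — it lies strictly between $N_{\hat{2}}$ and the full maximal unipotent $N_0$, and differs from $N_{\hat{1}}$ — so one cannot deduce the vanishing of that coefficient directly from the cuspidality of $\pi$ on $\RU_{2,2}$. The argument must instead pass through rapid decrease of $f_\varphi$ on $\RU_{1,1}$ together with the structure of automorphic forms on the rank-one torus $T_{1,1}$, as above, and this is the part I would write out in detail. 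Everything else — convergence of the Fourier expansion, the identification of the characters of $N^{(2)}_+(F)\bks N^{(2)}_+(\BA)$ with the $\psi_{[2];\alpha}$, and the identification of the $\alpha=0$ term with the constant term — is routine given the conventions in \eqref{eq:psi'} and \eqref{eq:composition-U}.
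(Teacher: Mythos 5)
Your argument is correct and reaches the same conclusion, but it runs along a different track from the paper. Both proofs start from Lemma \ref{lm:U-22} and the rapid decrease of $f_\varphi:=\CF^{\psi_{[2^2];\vartheta}}(\varphi)$ on Siegel sets of $\RU_{1,1}(\BA)$, and both rely on the Fourier expansion of $f_\varphi$ along $N^{(2)}_+$; the difference is in how one rules out the bad case. You attack the $\alpha=0$ coefficient directly: you argue that the constant term of $f_\varphi$ along $N^{(2)}_+$ is an automorphic form on the torus $T_{1,1}\cong\Res_{E/F}\GL_1$, hence a finite combination of Hecke characters times powers of $\log|\cdot|$, which is incompatible with rapid decrease unless it is zero; this shows $f_\varphi$ is cuspidal on $\RU_{1,1}$, and then some $\alpha\in F^\times$ term in the Fourier series must be nonzero. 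The paper instead argues by contradiction: if every $\alpha\neq 0$ coefficient vanished, the Fourier expansion would make $f_\varphi$ left $N^{(2)}_+(\BA)$-invariant; since $f_\varphi$ is also left $\RU_{1,1}(F)$-invariant and $N^{(2)}_+(\BA)$ together with the Weyl element of $\RU_{1,1}(F)$ generate $\SU_{1,1}(\BA)$, the function $f_\varphi$ would factor through the compact quotient $\RU_{1,1}(F)\SU_{1,1}(\BA)\backslash\RU_{1,1}(\BA)$ and be proportional to $\omega_\pi$, contradicting rapid decrease.

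The trade-off is worth noting. The paper's route needs nothing beyond smoothness and rapid decrease of $f_\varphi$ and the group-theoretic fact that $\SL_2(\BA)\cong\SU_{1,1}(\BA)$ is generated by one unipotent subgroup and a Weyl element. Your route, while conceptually cleaner (``rapidly decreasing $\Rightarrow$ cuspidal''), implicitly requires that $f_\varphi$ be an automorphic form on $\RU_{1,1}(\BA)$ in the technical sense --- in particular $\mathcal{Z}(\RU_{1,1})$-finite --- before you may invoke the structure theory of constant terms on the torus. That property of Fourier coefficients restricted to the stabilizer is true and can be extracted from \cite{JZ-BF}, but it is an additional input the paper's argument sidesteps. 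You did flag the cuspidality step as the one needing care, which is exactly right; if you write it out you should address the automorphy of $f_\varphi$ on $\RU_{1,1}$ explicitly, rather than only its rapid decrease.

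One small inaccuracy: $\RU_{1,1}$ does have a nontrivial $F$-rational character, namely the determinant into $\RU_1$; what you actually want (and what is true) is that it has no nontrivial $F$-rational character into $\mathbb{G}_m$, since $\RU_1$ is $F$-anisotropic, so $\RU_{1,1}(\BA)^1=\RU_{1,1}(\BA)$ and the Siegel-set discussion is as you describe. This does not affect the substance of the argument.
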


\begin{proof}
By Lemma \ref{lm:U-22}, for any $\varphi\in \pi$, the function $\CF^{\psi_{[2^2];\vartheta}}(\varphi)$ is nonzero and rapidly decreasing on
$\RU_{1,1}(\BA)$. Hence the space $\CF^{\psi_{[2^2];\vartheta}}(\pi)$ can be realized as a nonzero subspace
in  $L^2(\RU_{1,1}(F)\bks \RU_{1,1}(\BA),\omega_\pi)$,
where $\omega_\pi$ is the central character of $\pi$.
It suffices to show that there exists $\alpha\in F^\times$ such that
the Whittaker-Fourier coefficient of $\CF^{\psi_{[2^2];\vartheta}}(\varphi)$, which is
$\CF^{\psi_{[2];\alpha}}(\CF^{\psi_{[2^2];\vartheta}}(\varphi))$, is not identically zero.

In general, for a nonzero automorphic function $\phi$ of $\RU_{1,1}(\BA)$, belonging to  the space $L^2(\RU_{1,1}(F)\bks \RU_{1,1}(\BA),\omega_\pi)$,
we claim that if $\CF^{\psi_{[2];\alpha}}(\phi)=0$ for all $\alpha\ne 0$, then $\phi$ is equal to $c\omega_\pi$ on $\RU_{1,1}(\BA)$ for some constant $c\in \BC$.

Indeed, suppose that $\CF^{\psi_{[2];\alpha}}(\phi)=0$ for all $\alpha\ne 0$.
Applying the Fourier expansion of $\phi$ along its maximal unipotent subgroup $N^{(2)}_+$, we have
$$
\phi(g)=\sum_{\alpha\in F}\CF^{\psi_{[2];\alpha}}(\phi)(g)=\int_{N^{(2)}_+(F)\bks N^{(2)}_+(\BA)}\phi(ng)\ud n.
$$
It follows that $\phi$ is  invariant under the left multiplication of $N^{(2)}_+(\BA)$.
Of course, $\phi$ is also left invariant under $\RU_{1,1}(F)$. Note that
$N^{(2)}_+(\BA)$ and the Weyl group in $\RU_{1,1}(F)$ generate the subgroup $\SU_{1,1}(\BA)$ of $\RU_{1,1}(\BA)$.
Hence $\phi$ is left $\SU_{1,1}(\BA)$-invariant, which implies $\phi=c\omega_\pi$.

Now, we apply the above claim to the function $\CF^{\psi_{[2];\alpha}}(\CF^{\psi_{[2^2];\vartheta}}(\varphi))$.
Suppose that $\CF^{\psi_{[2];\alpha}}(\CF^{\psi_{[2^2];\vartheta}}(\varphi))$ is zero for all $\alpha\in F^\times$ and $\varphi\in \pi$.
Since $\CF^{\psi_{[2^2];\vartheta}}(\varphi)$ is not zero, we deduce that $\CF^{\psi_{[2^2];\vartheta}}(\varphi)$ must be proportional to $\omega_\pi$ on $\RU_{1,1}(\BA)$.
However, it is known (and is easy to verify) that the Fourier coefficient $\CF^{\psi_{[2^2];\vartheta}}(\varphi)$ is rapidly decreasing
on the Siegel domain of its stabilizer, which is $\RU_{1,1}(\BA)$. Therefore, $\CF^{\psi_{[2^2];\vartheta}}(\varphi)$ can not be proportional to $\omega_\pi$ on $\RU_{1,1}(\BA)$. This contradiction finishes the proof.
\end{proof}

Finally, we complete the proof by using the following lemma.

\begin{lem}\label{lm:comp-31}
Let $\varphi$ be a smooth automorphic function on $\RU_{2,2}(\BA)$ of uniform moderate growth.
Then
$$
\CF_{[2;\alpha]\circ[2^2;\vartheta]}(\varphi)\ne  0
\text{ if and only if }
\CF^{\psi_{[31];v_\alpha}}(\varphi)\ne 0,
$$
where $v_\alpha=e_2+\alpha e_{-2}$ with $\alpha\in F^\times$.
\end{lem}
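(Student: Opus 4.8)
The plan is to derive the lemma from the \emph{exchanging roots} technique of Ginzburg--Rallis--Soudry, in the precise form of \cite[Corollary 7.1]{GRS11}, applied just once. Both $\CF_{[2;\alpha]\circ[2^2;\vartheta]}(\varphi)$ and $\CF^{\psi_{[31];v_\alpha}}(\varphi)$ are integrals of the automorphic function $\varphi$, which is of uniform moderate growth, over $5$-dimensional unipotent $F$-subgroups of $\RU_{2,2}$ against unitary characters: the former over $N'=N^{(2)}_+N_{\hat 2}$ against $\psi'$ (by \eqref{eq:psi'} and \eqref{eq:composition-U}), the latter over $N_{\hat 1}$ against $\psi_{[31];v_\alpha}$ (by \eqref{eq:psi-3-1}). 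First I would write both groups out in the matrix coordinates fixed in Section \ref{sec-proofu4}, so that $N'_2=N_{\hat 1}\cap N_{\hat 2}$ appears as their common $3$-dimensional subgroup, on which, directly from \eqref{eq:psi-3-1} and \eqref{eq:psi'}, the characters $\psi_{[31];v_\alpha}$ and $\psi'$ restrict to one and the same character.

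The crucial structural point is that a slightly larger common subgroup exists. Let $X_1:=N^{(2)}_+$, the one-parameter subgroup on which $b$ ranges over $F$, and put $C':=N'_2\cdot X_1$, a $4$-dimensional unipotent subgroup (one checks that $X_1$ normalizes $N'_2$, the commutator landing in the subgroup carried by the $(1,4)$ entry). Since $\alpha\in F^\times$, one has $\iota(\alpha)=\alpha$; comparing \eqref{eq:psi-3-1} for $v_\alpha=e_2+\alpha e_{-2}$ with \eqref{eq:psi'} then shows both that $X_1$ coincides with the $F$-rational part $\{u\in F\}$ of the abelian root subgroup $\{u\in\BA_E\}$ of $N_{\hat 1}$ occurring in \eqref{eq:psi-3-1}, and that $\psi_{[31];v_\alpha}$ and $\psi'$ restrict to one and the same character $\Psi$ of $C'$. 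Moreover there are semidirect product decompositions
\[
N'=C'\rtimes Y,\qquad N_{\hat 1}=C'\rtimes X,
\]
where $Y$ is the one-dimensional subgroup of $N_{\hat 2}$ carried by the $(2,3)$ entry $\sqrt{\varsigma}\,r$ with $r\in F$ (the other coordinates of $n'$ being zero), and $X$ is the ``purely imaginary'' part $\{u\in\sqrt{\varsigma}F\}$ of the above root subgroup of $N_{\hat 1}$ (with $x_{1,1}=x_{1,2}=0$); here $\psi'|_Y=1$ and $\psi_{[31];v_\alpha}|_X=1$.

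It then remains to verify the hypotheses of \cite[Corollary 7.1]{GRS11} for $C'$, $X$, $Y$, $\Psi$, and $\varphi$: that $X$ and $Y$ normalize $C'$; that $[X,C']$ and $[Y,C']$ lie in $\Ker\Psi$ --- they land, respectively, in the subgroup carried by the $(1,4)$ entry and in the purely imaginary part of the subgroup carried by the $(1,3)$ and $(2,4)$ entries, on both of which $\Psi$ is trivial; and that the commutator pairing $X(\BA)\times Y(\BA)\to\BC^\times$, $(x,y)\mapsto\Psi([x,y])$, is non-degenerate. That last pairing is, up to a nonzero constant, $(u,r)\mapsto\psi_F(\varsigma\, r u_0)$ with $u=\sqrt{\varsigma}\,u_0$, and it is non-degenerate because $\varsigma\ne 0$ and $\psi_F$ is non-trivial. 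Granting these, \cite[Corollary 7.1]{GRS11} gives that $\int_{[C']}\int_{[Y]}\varphi(c'y)\Psi^{-1}(c')\,dy\,dc'$ is not identically zero if and only if the same integral with $Y$ replaced by $X$ is not identically zero; by the two semidirect decompositions together with $\psi'|_Y=1$ and $\psi_{[31];v_\alpha}|_X=1$, the first integral equals $\CF_{[2;\alpha]\circ[2^2;\vartheta]}(\varphi)$ and the second equals $\CF^{\psi_{[31];v_\alpha}}(\varphi)$, which is the assertion of the lemma.

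The only genuine labour is the bookkeeping in the preceding paragraph --- tracking the factors $\iota$, $\sqrt{\varsigma}$ and the signs in the relevant root-subgroup commutators, so as to confirm that $\Psi$ is trivial on $[X,C']$ and $[Y,C']$ while restricting to a non-degenerate pairing on $[X,Y]$. I expect this to be the main (though essentially routine) obstacle; it is also the step where the hypothesis $\alpha\in F^\times$ is indispensable, since it is exactly what aligns the non-trivial directions of $\psi_{[31];v_\alpha}$ and of $\psi'$ and thereby makes the common subgroup $C'$ and character $\Psi$ exist. Consistently with this, by \eqref{eq:length-v} the condition $\alpha\in F^\times$ is what makes $v_\alpha$ anisotropic, so that the $[31]$-side stabilizer is $\RU_1$.
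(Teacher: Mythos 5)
Your proof is correct and is essentially the paper's own argument: you build the same common subgroup $C'=N_{\hat 1}\cap N'$ with the same restricted character, and apply \cite[Corollary 7.1]{GRS11} once to the same pair of complementary one-parameter subgroups (your $X$ is the paper's $Y$ and vice versa — an immaterial relabeling, since the GRS statement is symmetric in $X$ and $Y$). The commutator computations and the verification that the pairing is non-degenerate match the paper's.
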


\begin{proof}
By \eqref{eq:composition-U}, the Fourier coefficient $\CF_{[2;\alpha]\circ[2^2;\vartheta]}(\varphi)$ is an integration over the unipotent subgroup $N'$, 
while by \eqref{eq:psi-3-1}, the Fourier coefficient $\CF^{\psi_{[31];v_\alpha}}(\varphi)$ is an integration over the unipotent subgroup $N_{\hat{1}}$. 
The equivalence of the non-vanishing of the two Fourier coefficients in this lemma is exactly the situation given by \cite[Corollary 7.1]{GRS11} 
(or by \cite[Section 6]{JL-Cogdell} for a more general situation). 
We will use the notation in \cite[Corollary~7.1]{GRS11} in this proof. 

Take $C:=N_{\hat{1}}\cap N'$ to be a topological subgroup of $N_0(\BA)$ consisting of elements of form
$$
\begin{pmatrix}
1&b&x_{1,1}&x_{1,2}\\
&1&0&-\iota(x_{1,1})\\
&&1&-b\\
&&&1	
\end{pmatrix}\qquad (b\in \BA_F,~x_{1,1}\in\BA_E),
$$
where $N'=N^{(2)}_+N_{\hat{2}}$,
and $\psi_C:=\psi'\vert_C$ is the restriction of $\psi'$ (in \eqref{eq:psi'}) to the subgroup  $C(\BA)$.
Write $X=N^{(1)}$ consisting of elements of form $u_2(x)$ (see \eqref{eq:N-(1)}).
Set $Y$ to be the subgroup of $N^{(2)}=N_0\cap M_{\hat{2}}$ consisting of elements
$$
u_{1}(y):=m(\begin{pmatrix}
	1& \sqrt{\varsigma}y\\ &1
\end{pmatrix} ), \qquad (y\in \BA_F).
$$
Note that $N^{(2)}=N^{(2)}_+Y$. It is clear that $X$ and $Y$ are abelian and intersect with $C$ trivially. Meanwhile, we have that
$N_{\hat{1}}=YC$ and $N'=XC$.
Also, it is easy to verify that $X$ and $Y$ normalize $C$. By a simple calculation, we have that the commutator
$$
[u_2(x), u_1(y)]=\begin{pmatrix}
1&0&-\varsigma xy&\sqrt{\varsigma}\varsigma xy^2\\
&1&0&\varsigma xy\\
&&1&0\\
&&&1	
\end{pmatrix}\in C.
$$
Note that the pairing $X(\BA)\times Y(\BA)\to \BC^\times$ given by
$$
(u_2(x), u_1(y))\mapsto \psi_C([u_2(x), u_1(y)])=\psi_F(-\varsigma xy)
$$
is multiplicative in each coordinate and non-degenerate. It follows that
the paring identifies $Y(F)$ with the dual of $X(F)\bks X(\BA)$ and $X(F)$ with the dual of $Y(F)\bks Y(\BA)$.
Thus the data $X$, $Y$, $C$ and $\psi_C$ satisfy all the conditions in \cite[Corollary 7.1]{GRS11}.
Therefore, the proof is completed by directly using \cite[Corollary 7.1]{GRS11}.
\end{proof}

\subsection{On lower rank unitary groups}\label{ssec-lrug}
Take positive integers $a_1$ and $a_2$ such that $a_1+a_2=4$. Consider the endoscopy group $\RU_{E/F}(a_1)\times\RU_{E/F}(a_2)$ of $\RU_{2,2}$.
Let $\phi_4=(\tau_1,1)\boxplus(\tau_2,1)$ be a generic global Arthur parameter of $\RU_{2,2}$.
By Theorem \ref{thm:main-U}, we have the following non-vanishing property.

\begin{cor}\label{cor-nv4}
There exists an automorphic character $\chi$ of $\RU_1$, depending on $\phi_4$ as given above, such that for
any automorphic member $\pi_1$ in the generic global Arthur packet $\wt{\Pi}_{(\tau_1,1)}(\RU_{E/F}(a_1))$ and for any automorphic member $\pi_2$ in the
generic global Arthur packet $\wt{\Pi}_{(\tau_1,1)}(\RU_{E/F}(a_2))$, the product
$$
L(\frac{1}{2},\pi_1\times\chi)\cdot L(\frac{1}{2},\pi_2\times\chi)=
L(\frac{1}{2},\tau_1\times\chi)\cdot L(\frac{1}{2},\tau_2\times\chi)
$$
is nonzero.
\end{cor}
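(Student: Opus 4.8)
The statement is a formal consequence of Theorem~\ref{thm:main-U} once one unwinds the definition of the Rankin--Selberg $L$-function attached to a pair of global Arthur parameters. Here is the plan.

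First I would produce a generic cuspidal member of the packet $\wt{\Pi}_{\phi_4}(\RU_{2,2})$. Since $\phi_4=(\tau_1,1)\boxplus(\tau_2,1)$ is a generic elliptic parameter of $\RU_{2,2}=\RU^*_{E/F}(4)$ --- all the $\SL_2$-factors being trivial --- the members of $\wt{\Pi}_{\phi_4}(\RU_{2,2})$ are cuspidal, and by \cite[Section~3.1]{JL-Cogdell} the packet contains a member $\pi$ with a non-zero Whittaker--Fourier coefficient. Fix one such $\pi$; it is determined by $\phi_4$. Applying Theorem~\ref{thm:main-U} to $\pi$ then produces an automorphic character $\chi$ of $\RU_1(\BA)$ with $L(\frac{1}{2},\pi\times\chi)\neq 0$, and since $\pi$ depends only on $\phi_4$, so does $\chi$.

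Next I would factor this central value. By the definition of the tensor product $L$-function in terms of global Arthur parameters, as in \cite{A13}, \cite{Mk15} and \cite{KMSW}, one has $L(s,\pi\times\chi)=L(s,\phi_4\times\phi_\chi)$, where $\phi_\chi$ is the simple generic global Arthur parameter of $\RU_1$ attached to $\chi$; and this $L$-function is multiplicative along the isobaric decomposition $\phi_4=(\tau_1,1)\boxplus(\tau_2,1)$, so that
\[
L(\tfrac{1}{2},\pi\times\chi)=L(\tfrac{1}{2},\tau_1\times\chi)\cdot L(\tfrac{1}{2},\tau_2\times\chi).
\]
The left-hand side being nonzero forces each factor $L(\frac{1}{2},\tau_i\times\chi)$ to be nonzero. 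Finally I would pass from $\tau_i$ to an arbitrary automorphic member of $\wt{\Pi}_{(\tau_i,1)}(\RU_{E/F}(a_i))$. Since $\RU_{E/F}(a_1)\times\RU_{E/F}(a_2)$ is an elliptic endoscopic datum of $\RU_{2,2}$ and $\phi_4$ decomposes compatibly with it, $(\tau_i,1)$ is a generic global Arthur parameter of $\RU_{E/F}(a_i)$ for $i=1,2$, and for any automorphic $\pi_i\in\wt{\Pi}_{(\tau_i,1)}(\RU_{E/F}(a_i))$ the $L$-function $L(s,\pi_i\times\chi)$ depends only on the parameter $(\tau_i,1)$ and equals $L(s,\tau_i\times\chi)$. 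Combining, $L(\frac{1}{2},\pi_1\times\chi)\cdot L(\frac{1}{2},\pi_2\times\chi)=L(\frac{1}{2},\tau_1\times\chi)\cdot L(\frac{1}{2},\tau_2\times\chi)\neq 0$, which is the assertion.

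There is essentially no obstacle here beyond Theorem~\ref{thm:main-U} itself: the whole deduction is a bookkeeping exercise with the formalism of \cite{A13}, \cite{Mk15}, \cite{KMSW}. The two points that deserve a sentence of justification are that $\wt{\Pi}_{\phi_4}(\RU_{2,2})$ contains a generic cuspidal member --- which is why we may invoke Theorem~\ref{thm:main-U}, whose hypothesis is a generic cuspidal representation --- and that $(\tau_i,1)$ is a bona fide generic global Arthur parameter of $\RU_{E/F}(a_i)$; both follow automatically from $\phi_4$ being generic elliptic and $\RU_{E/F}(a_1)\times\RU_{E/F}(a_2)$ being an elliptic endoscopic datum, since then the parity and conjugate self-duality conditions are inherited by the isobaric summands.
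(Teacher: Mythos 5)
Your proof is correct and follows exactly the route the paper takes (the paper offers no written proof beyond the phrase ``By Theorem~\ref{thm:main-U},'' but the argument it has in mind is precisely yours): pick a generic cuspidal member of $\wt{\Pi}_{\phi_4}(\RU_{2,2})$ --- which exists by \cite[Section~3.1]{JL-Cogdell} and is cuspidal because $\phi_4$ is generic elliptic --- apply Theorem~\ref{thm:main-U} to obtain $\chi$, and then observe that the central value $L(\frac{1}{2},\pi\times\chi)=L(\frac{1}{2},\phi_4\times\chi)$ factors along the isobaric decomposition $(\tau_1,1)\boxplus(\tau_2,1)$ and depends only on the parameters, so the non-vanishing propagates to any automorphic members $\pi_i\in\wt{\Pi}_{(\tau_i,1)}(\RU_{E/F}(a_i))$. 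Your two flagged justification points (existence of a generic cuspidal member, and $(\tau_i,1)$ being a genuine generic parameter of $\RU_{E/F}(a_i)$) are exactly the ones worth stating, and both hold for the reasons you give.
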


\footnotesize
\noindent\textit{Acknowledgments.}
The research of the first named author is supported in part by the NSF Grant DMS--1600685,
and that of the second named author is supported in part by the National University of Singapore's start-up grant.

\end{document}